\numberwithin{equation}{section}
\newtheorem{theorem}{Theorem}[section] 
\newtheorem{lemma}[theorem]{Lemma} 
\newtheorem{proposition}[theorem]{Proposition}
\newtheorem{corollary}[theorem]{Corollary}
\newtheorem{definition}[theorem]{Definition}
\newtheorem{remark}[theorem]{Remark}
\newtheorem{example}[theorem]{Example}
\newcommand{\eqnsection}{
\renewcommand{\theequation}{\thesection.\arabic{equation}}
 \makeatletter   \csname  @addtoreset\endcsname{equation}{section}
   \makeatother}
\renewcommand{\mathcal}{\mathscr}
\def \np{\par\noindent}
\def \qed{$\Box $}
\def\R{\mathbb{R}} 
\def\N{\mathbb{N}}
\def\E{\mathbb{E}} 
\def\P{\mathbb{P}}
\def\cB{\mathcal{B}} 
\def\cF{\mathcal{F}} 
\def\cL{\mathcal{L}} 
\def\cS{\mathcal{S}}
\def\0{\mathbf{0}} 
\def\1{\mathbf{1}}
\def\var{{\rm Var}}
\def \eid{\stackrel{d}{=}} 
\def \cip{\stackrel{P}{\rightarrow}} 
\def \lt{[\hskip-1.3pt[ }
\def \rt{ ]\hskip-1.3pt]}
\def\({ \left( }
\def\){\right) }
\def \hT{\hat{T}}
\newcommand{\devnull}[1]{}
\begin{document}
\begin{frontmatter}
\title{Representations and isomorphism identities for  infinitely divisible processes}
\runtitle{Isomorphisms for  infinitely divisible processes}
\author{\fnms{Jan} \snm{Rosi\'nski}\thanksref{t1}\ead[label=e1]{rosinski@utk.edu}}
\runauthor{J. Rosi\'nski}
\affiliation{University of Tennessee}
\address{Department of Mathematics\\
University of Tennessee\\
Knoxville, Tennessee 37996\\
 USA\\ 
\printead{e1}}
\thankstext{t1}{Research partially supported by the Simons Foundation grant 281440}

%




\date{November 19, 2017}

\maketitle

\begin{abstract}
\noindent 
We propose isomorphism type identities for nonlinear functionals of general infinitely divisible processes. Such identities can be viewed as an analogy of the Cameron-Martin formula for Poissonian infinitely divisible processes but with random translations. The applicability of such tools relies on precise understanding of L\'evy measures of  infinitely divisible processes and their representations, which are studied here in full generality. We illustrate this approach on examples of squared Bessel processes, Feller diffusions, permanental processes, as well as L\'evy processes. 
\end{abstract}

\begin{keyword}[class=MSC]
\kwd[Primary ]{60E07}
\kwd{60G15}
\kwd{60G17}
\kwd{60G51}
\kwd[; secondary ]{60G60}
\kwd{60G99}
\end{keyword}

\begin{keyword}
\kwd{Infinitely divisible process}
\kwd{L\'evy measure on path spaces}
\kwd{Isomorphism identities}
\kwd{Stochastic integral representations}
\kwd{Series representations}
\kwd{Dynkin Isomorphism Theorem}
\end{keyword}

\end{frontmatter}

\smallskip



\eqnsection

\baselineskip16pt

\section{Introduction}\label{s:intro}

Let $G=(G_t)_{t\in T}$ be a centered Gaussian process over an arbitrary set $T$. The Cameron-Martin Formula says that for every random variable $\xi$ in the $L^2$-closure of the subspace spanned by $G$ and for any  measurable functional $F: \R^T \mapsto \R$ 
\begin{equation} \label{C-M}
  \E \left[ F\left((G_t + \phi(t))_{t\in T}  \right) \right] = \E \left[ F\left((G_t)_{t\in T}\right) e^{\xi- \frac{1}{2}\E \xi^2}\right]
\end{equation}
where $\phi(t)= \E(\xi G_t)$. This formula has many applications, including SDEs and SPDEs driven by Gaussian random fields. 
It can also be viewed as an isomorphism identity between functionals of a translated Gaussian process and the corresponding functionals of the untranslated process,  under the changed probability measure. 

It is well-known that \eqref{C-M} does not extend to the Poissonian case. Indeed, it is easy to show that if $Y=(Y_{t})_{t\in [0,1]}$ is a Poisson process, then there is no function $\psi: [0,1] \to \R$, $\psi \not\equiv 0$, such that 
\begin{equation} \label{C-M-P}
  \E \left[ F\left((Y_t + \psi(t))_{t\in [0,1]}  \right) \right] = \E \left[ F\left((Y_t)_{t\in [0,1]}\right) \eta \right]
\end{equation}
for all measurable functionals $F: \R^{[0,1]} \mapsto \R$ and some random variable $\eta \ge 0$ with $\E \eta=1$. 

In this paper we propose isomorphism identities based on random translations as follows. Let $X=(X_{t})_{t\in T}$ be an infinitely divisible process  over a general set $T$ (i.e., a process whose finite dimensional distributions are infinitely divisible).  
Then, for every process $Z= (Z_{t})_{t\in T}$ independent of $X$, whose distribution is absolutely continuous with respect to  L\'evy measure of $X$, there exists a measurable function $g: \R^T \mapsto \R_{+}$ such that for any measurable functional $F: \R^T \mapsto \R$
\begin{equation} \label{JR}
  \E \left[ F\left((X_t + Z_t)_{t\in T}  \right) \right] = \E \left[ F\left((X_t)_{t\in T}\right)\,  g(X) \right]\, .
\end{equation} 

 Relation \eqref{JR} was inspired by Dynkin's Isomorphism Theorem \cite{Dynkin84}, where $Z=(Z_x)_{x \in E}$ is the total accumulated local time at a state $x$ of a strongly symmetric transient Markov process with the state space $E$ and $X=(X_x)_{x \in E}$ is the squared associated Gaussian process, see \citet[Chapter 8]{Marcus-Rosen(book)}. Since the process $X$ is infinitely divisible by results of \citet{Eisenbaum03} and  \citet{Eisenbaum-Kaspi06}, and $Z$ (over $T=E$) satisfies assumptions of \eqref{JR}, Dynkin's Isomorphism Theorem holds as a special case of \eqref{JR}.  See Example \ref{e:Dynkin} for details. Actually, we present this example in a more general setting of permanental processes, following work of \citet{Eisenbaum-Kaspi09}. 
 
There are two basic directions of applying identity \eqref{JR}. The first one is to start with a process $Z= (Z_{t})_{t\in T}$ of interest, associate with it (possibly) easier to handle infinitely divisible process $X=(X_{t})_{t\in T}$ whose L\'evy measure dominates the law of $Z$, and transfer path properties of $X$ to $Z$ via isomorphism \eqref{JR}.  Using Dynkin's Isomorphism Theorem, Marcus and Rosen derived many results for local times of Markov processes, including L\'evy processes, see, e.g.,  \cite{Marcus-Rosen92}, \cite{Marcus-Rosen(book)}. 
Another direction of applications of \eqref{JR} is much harder, to derive information about $X$ by utilizing $Z$. One way to approach it is to consider the ``converse'' version of \eqref{JR} which  expresses $X$ as the process $X+Z$ with changed measure.  Proposition \ref{p:Lp0} is proven in this spirit.  In Section \ref{s:iso} and \ref{s:series} we present direct and converse versions of \eqref{JR}.

In Section \ref{s:series} we link isomorphism identity \eqref{JR} with series representations of infinitely divisible processes.  This yields more insight into the structure of the admissible translation $Z$, which could be viewed as the zero-term in the series expansion of  $X$, see Theorem \ref{t:iso-s}. Such representations have proven useful in the study of path regularities of infinitely divisible processes, see, e.g., \citet[ch. 11]{Talagrand(14)}.

Successful implementation of isomorphism identities requires precise understanding of L\'evy measures of processes, which are defined on path spaces with the usual cylindrical $\sigma$-algebras (as opposed to $\sigma$-rings in \cite{Lee67} and \cite{Maruyama70}). Section \ref{s:lm} contains systematic development of L\'evy measures and spectral representations based on lecture notes \cite{Rosinski07}. We view L\'evy measures as  ``laws of processes'' defined on possibly infinite measure spaces and call such ``processes'' representations of L\'evy measures. Properties of L\'evy measures are defined by properties of their representations. Transfer of regularity property (Theorem \ref{transfer}) puts the L\'evy measure on the same Borel function space where paths of the corresponding infinitely divisible processes belong. This allows to relate path properties of processes and representations of their L\'evy measures. 

Throughout this paper we illustrate general concepts by selected examples of infinitely divisible processes. For this purpose we have chosen L\'evy processes, squared Bessel processes, Feller diffusions, general compound Poisson processes, and permanental processes. Some of these processes are presented in their simplest forms in order to not obscure ideas by technical complications. This list of examples can be extended by many processes of interest, including cylindrical L\'evy processes in Banach spaces of \cite{A-R(10)}, which can be considered as infinitely divisible processes defined on $T= \R \times E'$, where $E$ is a Banach space, selfdecomposable fields \cite{BSS(15)}, and multidimensional infinitely divisible processes, which become one-dimensional after enlarging the index set.

The paper is organized as follows. Section \ref{s:lm} presents a detailed study of L\'evy measures on path spaces. Theorem \ref{t:LK-P} shows existence and uniqueness of such measures for every Poissonian infinitely divisible processes. Theorem \ref{t:cLM} characterizes the $\sigma$-finiteness of L\'evy measures and Theorem \ref{n-sf} characterizes Poissonian infinitely divisible process having not $\sigma$-finite L\'evy measures. The concept of representations of L\'evy measures is introduced and discusses on examples in the final part of Section \ref{s:lm}. Section \ref{s:LI} gives a L\'evy-It\^o representation for general infinitely divisible processes (see Theorem \ref{t:LI}) and Theorem \ref{transfer} states the above mentioned transfer of regularity property. In Section \ref{s:iso} isomorphism identities are given in Theorems \ref{t:iso0}, \ref{t:iso1}, and \ref{t:iso2}. 
Dynkin Isomorphism Theorem for permanental processes is discussed in Example \ref{e:Dynkin}.  Proposition \ref{p:s-iso} characterizes processes satisfying an abstract form of Dynkin's isomorphism, first considered in \cite[Lemma 3.1]{Eisenbaum08}. In the final part of this section, applications of isomorphism identities for L\'evy processes are given. Section \ref{s:series} relates representations of L\'evy measures of Section \ref{s:lm} to series representations of Poissonian infinitely divisible processes. As examples we give series representations of Feller diffusions and squared Bessel processes. Then we connect such representations with isomorphism identities in Theorem \ref{t:iso-s}. Section~\ref{s:Proofs} contains the proofs of results from Sections \ref{s:lm}--\ref{s:series}. 

Recall that a stochastic process $X=\(X_t\)_{t \in T}$, with an arbitrary index set  $T$, is said to be Poissonian infinitely divisible if its all finite dimensional marginal distributions are infinitely divisible  without Gaussian part. 
 
Throughout this paper, an identity as \eqref{JR} reads: if one side exists then the other does and they are equal.

\bigskip

\section{L\'evy measures on path spaces}\label{s:lm}

\subsection{Definitions and preliminaries}\label{s:pre}

L\'evy measures of probability laws on $\R^d$ are usually defined either on $\R^d \setminus \{0 \}$ or on  $\R^d$, in the second case under the assumption that L\'evy measures do not charge the origin. Identifying $\R^d$ with $\R^T$, where $T=\{1,\dots,d \}$, we have two natural ways to define L\'evy measures for infinitely divisible processes over any set $T$. The first way is to define a L\'evy measure on the $\sigma$-ring generated by cylindrical subsets of $\R^T \setminus \{0\}$, as proposed by \citet{Lee67} and \citet{Maruyama70}. This approach, however, leads to substantial conceptual and technical difficulties when $T$ is uncountable. Therefore, we have chosen the second way, to consider L\'evy measures on the canonical path space $(\R^T, \cB^T)$, on which the laws of stochastic processes over $T$ are defined. This approach, in particular,  allows us to talk about L\'evy measures as ``laws of stochastic processes''. 

Let $\R^T$ be the space of all functions $x: T \mapsto \R$, and let  $\cB^T$ denote its cylindrical (product) $\sigma$-algebra. 
The law of a stochastic process $X=(X_{t})_{t\in T}$ is a probability measure $\mu$ on $(\R^{T}, \cB^{T})$ given by
\begin{equation} \label{}
  \mu(A) = \P\{\omega: (X_{t}(\omega))_{t\in T} \in A\}, \quad A \in \cB^{T},
\end{equation}
and we write  $\cL(X)=\mu$. For any $S \subset T$, $X_S :=(X_{t})_{t\in S}$ is the restriction of the process $X$ to the index set $S$. Similarly, for any $x \in \R^T$, $x_S$  denotes the restriction of function $x$ to $S \subset T$. Finally, $0_S$  stands for the origin of $\R^S$, which will be viewed as a point or the one-point set, depending on the context.

\begin{definition}
A measure $\nu$ on $(\R^{T}, \cB^{T})$ is said to be a L\'evy measure if the following two conditions hold
\begin{enumerate}[\rm (L1)]
  \item\label{LM} for every $t \in T$ \ 
   $\int_{\R^{T}} |x(t)|^2 \wedge 1 \, \nu(dx)< \infty$,  
  \item\label{LM0} for every  $A \in \mathcal{B}^T$ \ 
    $\nu(A) = \nu_{\ast}(A \setminus 0_T)$, \  
    where $\nu_{\ast}$ is the inner measure.
\end{enumerate}
\end{definition}

\medskip

The first condition is a technical one, needed for the integral in the L\'evy-Khintchine formula \eqref{LK} to be well-defined. The second condition gives the meaning to ``$\nu$ does not charge the origin''.
If $T$ is countable, then $0_T \in \mathcal{B}^T$ and (L\ref{LM0}) is equivalent to $\nu(0_T)=0$, which is the usual condition for L\'evy measure.  If $T$ is uncountable, then $0_T \notin \mathcal{B}^T$, so that $\nu(0_T)$ is undefined. However, (L\ref{LM0}) still makes sense. We will show that every infinitely divisible process has a unique measure satisfying this definition.

\begin{remark}\label{r:cLM}
{\rm 
The following condition implies (L\ref{LM0}) and is often easier to check: there exists a countable set $T_0 \subset T$ such that 
\begin{equation} \label{cLM}
   \nu\{x\in \R^T: x_{T_{0}} = 0\}=0\, .
\end{equation}
Indeed, from \eqref{cLM} we get for any $A \in \cB^T$ 
$$
\nu(A) \ge  \nu_{\ast}(A \setminus 0_T) \ge \nu(A \setminus \{x: x_{T_{0}} =0\}) = \nu(A)\, ,
$$
which shows (L\ref{LM0}). 
}
\end{remark}

\bigskip

Throughout this paper, $\hT$ will denote the family of all finite nonempty subsets of the index set $T$, 
\begin{equation} \label{}
  \hT =\{I \subset T: 0< \mathrm{Card}(I)< \infty \},
\end{equation}
so that for any  $I \in \hT$, \ $\R^I$ can be identified with the Euclidean space $\R^{\mathrm{Card}(I)}$. We also set
$$
\hT_c := \{J \subset T:   \ \text{$J$ is nonempty countable} \}. 
$$
Let ${\pi_S: \R^T \mapsto \R^S}$ is the projection from $\R^T$ onto $\R^S$, $\pi_S(x) :=x_S$. Finally,
$$
\cB_0^S := \{B \in \cB^S: 0_S \notin B \}. 
$$

The next lemma sheds more light on condition (L\ref{LM0}). 
 
\begin{lemma}\label{l:nu*}
	Let $\nu$ be an arbitrary measure on $(\R^T, \cB^T)$. Then for every $ A \in \cB^T$
	\begin{equation} \label{nu*}
  \nu_{*}(A \setminus 0_T) = \sup_{J \in \hT} \nu(A \setminus \pi^{-1}_J(0_J) ) \, .
  \end{equation}
Consequently, 
$$
\nu^0( A ) := \nu_{*}(A \setminus 0_T), \quad A \in  \cB^T
$$ 
is a measure satisfying (L\ref{LM0}) and  
\begin{equation} \label{nu_0}
  \nu^0 = \nu \quad \text{on } \ \cB^T_0\, .
\end{equation}
Therefore, if $\nu$ satisfies (L\ref{LM}) then $\nu^0$ is a L\'evy measure. 
\end{lemma}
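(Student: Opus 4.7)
The plan is to first prove the master identity \eqref{nu*}, then extract the remaining assertions from it.

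For \eqref{nu*}, I would decompose
$$A \setminus 0_T = \bigcup_{J \in \hT} \bigl(A \setminus \pi_J^{-1}(0_J)\bigr),$$
since $x \neq 0_T$ iff $x(t) \neq 0$ for some $t$, i.e., $x \notin \pi_{\{t\}}^{-1}(0_{\{t\}})$. Each set on the right is measurable (finite products of coordinates), so the $(\ge)$ direction of \eqref{nu*} is immediate from the definition of inner measure. For $(\le)$, I would invoke the standard fact that any $B \in \cB^T$ depends on countably many coordinates: there exist $S \in \hT_c$ and $B_0 \in \cB^S$ with $B = \pi_S^{-1}(B_0)$. If in addition $B \subseteq A \setminus 0_T$, then $0_T \notin B$; combined with $S$-measurability this forces $0_S \notin B_0$, so $B \cap \pi_S^{-1}(0_S) = \emptyset$ and hence $B \subseteq A \setminus \pi_S^{-1}(0_S)$. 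Enumerating $S = \{t_1, t_2,\dots\}$ and writing $S_n = \{t_1,\dots,t_n\}$, the sets $A\setminus \pi_{S_n}^{-1}(0_{S_n})$ increase to $A \setminus \pi_S^{-1}(0_S)$, so by monotone continuity of $\nu$,
$$\nu(B) \le \nu\bigl(A \setminus \pi_S^{-1}(0_S)\bigr) = \sup_n \nu\bigl(A \setminus \pi_{S_n}^{-1}(0_{S_n})\bigr) \le \sup_{J \in \hT} \nu\bigl(A \setminus \pi_J^{-1}(0_J)\bigr).$$
Taking the supremum over $B$ yields the $(\le)$ direction.

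Next I would verify that $\nu^0$ is a measure. Countable additivity for disjoint $A_n$ follows by choosing, for any $\epsilon > 0$ and any $n$ with $\nu^0(A_n) < \infty$, a countable $S_n$ with $\nu(A_n \setminus \pi_{S_n}^{-1}(0_{S_n})) \ge \nu^0(A_n) - \epsilon/2^n$, and then combining these into the single countable set $S = \bigcup_n S_n$, for which
$$\nu^0\Bigl(\bigsqcup_n A_n\Bigr) \ge \nu\Bigl(\bigsqcup_n A_n \setminus \pi_S^{-1}(0_S)\Bigr) = \sum_n \nu\bigl(A_n \setminus \pi_S^{-1}(0_S)\bigr) \ge \sum_n \nu^0(A_n) - \epsilon,$$
while the reverse inequality is routine from the $(\le)$ direction of \eqref{nu*}; the case of infinite sums is handled by truncation and monotonicity. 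For \eqref{nu_0}, note that for $A \in \cB^T_0$ we have $A \setminus 0_T = A$ as sets, and $\nu_*$ agrees with $\nu$ on measurable sets, so $\nu^0(A) = \nu(A)$.

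Property (L2) for $\nu^0$ then follows by applying \eqref{nu*} with $\nu^0$ in place of $\nu$: since $A \setminus \pi_J^{-1}(0_J) \in \cB^T_0$, we have $\nu^0(A \setminus \pi_J^{-1}(0_J)) = \nu(A \setminus \pi_J^{-1}(0_J))$, and taking suprema gives $(\nu^0)_*(A \setminus 0_T) = \nu_*(A \setminus 0_T) = \nu^0(A)$. Finally, (L1) transfers because the integrand $|x(t)|^2 \wedge 1$ vanishes on $\{x(t) = 0\}$, which is a measurable superset of $\{0_T\}$, so $\int |x(t)|^2\wedge 1\,d\nu$ and $\int|x(t)|^2\wedge 1\,d\nu^0$ both reduce to integrals over the $\cB^T_0$-set $\{x(t)\neq 0\}$ where $\nu^0 = \nu$.

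The main technical step is the $(\le)$ half of \eqref{nu*}: the combination of the countable-support property of $\cB^T$ with the observation that a cylindrical set missing $0_T$ must entirely miss the cylinder $\pi_S^{-1}(0_S)$ over its support. Everything else is bookkeeping built on top of that identity.
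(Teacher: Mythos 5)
Your proof is correct and follows essentially the same route as the paper: both reduce a measurable subset of $A\setminus 0_T$ to a cylinder over a countable index set $S$, observe that it must avoid $\pi_S^{-1}(0_S)$, and exhaust $S$ by finite subsets via continuity from below. The only differences are cosmetic (you bound each competitor $B$ in the inner-measure supremum directly and check countable additivity with an explicit $\epsilon/2^n$ argument, whereas the paper picks an extremal $A_0$ and appeals to the directed, increasing family $\nu(\cdot\setminus\pi_J^{-1}(0_J))$, $J\in\hT$).
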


\medskip

Below we give some equivalent conditions to (L\ref{LM0}) that can be easier to verify.

\begin{lemma}\label{LM-equiv}
Let $\nu$ be a measure on $(\R^{T}, \cB^{T})$. The following conditions are equivalent to  (L\ref{LM0}). 
\begin{itemize}
  \item[\rm (a)] for every $T_0 \in \hT_c$ there exists  $T_1 \in \hT_c$ such that $T_0 \subset T_1$ and
 $$
 \nu\{x\in \R^T: x_{T_{0}} = 0\}=\nu\{x\in \R^T: x_{T_{0}} = 0, \ x_{T_{1}} \ne 0\}.
$$
\item[\rm (b)] for every  $T_0  \in \hT_c$ with $\nu\{x\in \R^T: x_{T_0} = 0\} > 0$ there is $t \notin T_0$ such that 
\begin{equation} \label{unc1}
  \nu\{x\in \R^T: x_{T_0} = 0, \ x(t) \ne 0 \} > 0;
\end{equation}
\item[\rm (c)] either \eqref{cLM} is satisfied for some  $T_0  \in \hT_c$  or for every  $T_0  \in \hT_c$ there is $t \notin T_0$ such that \eqref{unc1} holds.
\end{itemize}
\end{lemma}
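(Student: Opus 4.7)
The plan is to establish the chain $(\mathrm{L}\ref{LM0}) \Leftrightarrow \mathrm{(a)} \Leftrightarrow \mathrm{(b)} \Leftrightarrow \mathrm{(c)}$. Throughout, write $C(S) := \pi_S^{-1}(0_S) = \{x \in \R^T : x_S = 0\}$ for $S \in \hT_c$; then $\{C(S)\}$ is antitone in $S$ with $C(T_0 \cup T_1) = C(T_0) \cap C(T_1)$, and Lemma \ref{l:nu*} reformulates $(\mathrm{L}\ref{LM0})$ as the requirement that for every $A \in \cB^T$, $\nu(A) = \sup_{J \in \hT} \nu(A \setminus C(J))$.

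For $(\mathrm{L}\ref{LM0}) \Rightarrow \mathrm{(a)}$ I would apply this reformulation to $A = C(T_0)$. Since $C(T_0) \cap C(J) = C(T_0 \cup J)$, the supremum becomes $\sup_{J \in \hT} \nu(C(T_0) \setminus C(T_0 \cup J)) = \nu(C(T_0))$. Pick finite $J_n$ realising the sup --- in the case $\nu(C(T_0)) < \infty$ via $\nu(C(T_0 \cup J_n)) < 1/n$, and in the case $\nu(C(T_0)) = \infty$ via $\nu(C(T_0) \setminus C(T_0 \cup J_n)) > n$ --- and take $T_1 := T_0 \cup \bigcup_n J_n \in \hT_c$. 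Countable monotonicity of $\nu$ then yields the desired identity in both regimes. Conversely for $\mathrm{(a)} \Rightarrow (\mathrm{L}\ref{LM0})$, the case $A \in \cB^T_0$ is covered by \eqref{nu_0}; otherwise $A = \pi_S^{-1}(A_S)$ for some $S \in \hT_c$ with $0_S \in A_S$, and (a) applied to $T_0 = S$ produces $T_1 \supset S$ countable with $\nu(A \setminus C(T_1)) = \nu(A)$. Enumerating $T_1 = \{t_1, t_2, \ldots\}$ with $J_n = \{t_1, \ldots, t_n\}$, continuity of $\nu$ from below along $A \setminus C(J_n) \uparrow A \setminus C(T_1)$ concludes.

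The remaining equivalences are combinatorial. For $\mathrm{(a)} \Rightarrow \mathrm{(b)}$: the $T_1$ witnessing (a) places positive mass on the countable union $\bigcup_{t \in T_1 \setminus T_0}\{x_{T_0} = 0,\, x(t) \neq 0\}$, so subadditivity produces the required $t$. For $\mathrm{(b)} \Rightarrow \mathrm{(a)}$ I would use a maximality argument: let $\gamma := \sup\{\nu(C(T_0) \setminus C(T_1)) : T_1 \supset T_0,\ T_1 \in \hT_c\}$, realise it as $\nu(C(T_0) \setminus C(T_\infty))$ by taking $T_\infty := \bigcup_n T_1^{(n)}$ of an increasing maximising sequence (using that $C(T_0) \setminus C(T_\infty)$ is the increasing union of $C(T_0) \setminus C(T_1^{(n)})$), and observe that if the desired equation in (a) failed at $T_\infty$ then $\nu(C(T_\infty)) > 0$, so (b) would supply $t \notin T_\infty$ strictly enlarging the sup, contradicting its definition. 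Finally $\mathrm{(b)} \Leftrightarrow \mathrm{(c)}$ is a direct unpacking: case (ii) of (c) implies (b) at once, while case (i) of (c) combined with countable subadditivity on $T_0^* \setminus T_0$ also implies (b); conversely, if both alternatives of (c) fail then the offending $T_0$ directly refutes (b).

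The main difficulty I anticipate is the infinite-measure regime, where the ``subtraction'' reading of (a) must be replaced by $\nu(C(T_0) \setminus C(T_1)) = \infty$ and a naive finite iteration of (b) might accumulate only finite total mass $\sum_n a_n < \infty$. The supremum-attaining device --- taking the countable union of a maximising sequence rather than iterating one coordinate at a time --- is precisely what bypasses this obstacle and makes the argument go through uniformly in the two regimes.
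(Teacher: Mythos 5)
Your proof is correct. The forward chain (L\ref{LM0}) $\Rightarrow$ (a) $\Rightarrow$ (b) and the equivalence (b) $\Leftrightarrow$ (c) are essentially identical to the paper's argument: both rest on the identity \eqref{nu*} of Lemma \ref{l:nu*} applied to $A=\{x: x_{T_0}=0\}$ and on countable subadditivity over $T_1\setminus T_0$. Where you genuinely diverge is in closing the loop. The paper returns to (L\ref{LM0}) via (c): assuming the second alternative of (c), it takes an arbitrary cylinder $A$ containing $0_T$, picks $t\notin T_0$ with $\nu\{x: x_{T_0}=0,\ x(t)\ne 0\}=\alpha>0$, and derives $\nu_{\ast}(A\setminus 0_T)=\nu_{\ast}(A\setminus 0_T)+\alpha$, which forces the inner measure to be infinite so that (L\ref{LM0}) holds trivially. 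You instead prove (b) $\Rightarrow$ (a) by saturating the supremum $\gamma=\sup\nu(\{x_{T_0}=0\}\setminus\{x_{T_1}=0\})$ over countable $T_1\supset T_0$ along an increasing maximising sequence and using (b) to rule out residual mass on $\{x_{T_\infty}=0\}$, and then (a) $\Rightarrow$ (L\ref{LM0}) by the decomposition $\nu(A)=\nu(A\setminus\{x_S=0\})+\nu(\{x_S=0\})$ together with the witness $T_1$ from (a). Your route is more constructive, treats the finite and infinite regimes uniformly and explicitly (your closing remark about why naive iteration of (b) could stall at finite accumulated mass is exactly the right concern, and the supremum-saturation device does resolve it), whereas the paper's is shorter but hinges on the more surprising observation that in the non-$\sigma$-finite alternative every measurable set containing the origin has infinite inner measure off the origin. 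Both arguments are valid.
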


\medskip 

\begin{remark}
{\rm
	Condition (a) was the original condition for a L\'evy measure  in \citet{Rosinski07}. 
Condition (b) was communicated to us as equivalent to (a) by Gennady Samorodnitsky. Notice a subtle difference between (b) and the second alternative condition in (c). 
}
\end{remark}


\bigskip

\subsection{L\'evy-Khintchine and canonical spectral representations}\label{s:LK}

 
Let $X=\(X_t \)_{t \in T}$ be an infinitely divisible process, so that for every $I \in \hT$ the random vector $X_I$ is infinitely divisible in $\R^I$  (which is considered as $\R^{\mathrm{Card}(I)}$ with the inner product $\langle \cdot , \cdot \rangle$ and the norm $|\cdot|$). By the L\'evy-Khintchine representation \cite[Theorem 8.1] {Sato(99)}, there exists a unique triplet $(\Sigma_I, \nu_I, b_I)$ such that for every  $a \in \R^I$ 
\begin{equation} \label{LK-fidi}
  \E \exp i  \langle a, X_{I}\rangle =  \exp \left\{-\frac{1}{2} \langle a, \Sigma_I a \rangle  + i \langle a, b_I \rangle + \int_{\R^I} (e^{\langle a, x  \rangle } - 1 - i \langle a, \lt x\rt \rangle) ~\nu_I(dx)  \right\},
\end{equation}
where $\Sigma_I$ is a non-negative definite $I \times I$-matrix, $b_I\in \R^I$, and $\nu_I$  a L\'evy measure, i.e., $\nu_I$ is a Borel measure on $\R^I$ satisfying 
\begin{equation} \label{LM-fidi}
\int_{\R^I} |x_I|^2 \wedge 1 \, \nu_I(dx)< \infty \quad \hbox{and} \quad \nu_I(0_I)=0\, .
\end{equation}
Here $\lt \cdot \rt$  is a fixed truncation function defined as follows. Let $\chi: \R \mapsto \R$ be a bounded measurable function such that $\chi(v)=1+o(|v|)$ as $v \to 0$ and  $\chi(v)=O(|v|^{-1})$ as $|v| \to \infty$. We will call $\chi$ a cutoff function. For example, functions $\1_{\{|v|\le 1 \}}$, $(1\vee |v|)^{-1}$, $(1 + |v|^2)^{-1}$, and the usual cutoff function in a neighborhood of 0 satisfy these conditions.
The truncation of $v=(v_1,\dots,v_n) \in \R^n$ is defined by
\begin{equation} \label{truncation}
  \lt v\rt = \left( v_1\chi(|v_1|), \dots, v_n\chi(|v_n|) \right)\, .
\end{equation}
Similarly, if $y \in \R^S$ then $\lt y \rt \in \R^S$ is given by $\lt y\rt(s) = y(s) \chi(y(s)) $, $s \in S$. 

From the uniqueness of the triplet $(\Sigma_I, \nu_I, b_I)$ in \eqref{LK-fidi}, the following consistency conditions hold: for every $I, J \in \hT$ with  $I \subset J$ 
\begin{enumerate}[(c1)]
\item  $\Sigma_{J}$ restricted to $I \times I$ equals $\Sigma_{I}$, \label{c1}
  \item $b_J$ restricted to $I$ equals $b_I$, \label{c2}
  \item $\nu_J \circ \pi_{IJ}^{-1} = \nu_I$ on  \ $\cB^I_0$, \label{c3}
\end{enumerate} 
where $\pi_{IJ}: \R^J \mapsto \R^I$  denotes the natural projection from $\R^J$ onto $\R^I$. By the Kolmogorov Extension Theorem, there exist mutually independent centered Gaussian process $G=(G_{t})_{t\in T}$  and a Poissonian infinitely divisible process $Y=(Y_{t})_{t\in T}$ such that 
\begin{equation} \label{G+Y}
  X \eid G + Y\, ,
\end{equation}
where for every $I \in \hT$,  $G_I \sim N(0, \Sigma_{I})$ and 
\begin{equation} \label{LK-fidi-P}
  \E \exp i  \langle a, Y_{I}\rangle =  \exp \left\{ i \langle a, b_I \rangle + \int_{\R^I} (e^{\langle a, y  \rangle } - 1 - i \langle a, \lt y\rt \rangle) ~\nu_I(dy)  \right\}, \quad a \in \R^I.
\end{equation}
The covariance function $\Sigma$ of $G$  restricted to $I \in \hT$, equals $\Sigma_I$; similarly, by {\rm (c\ref{c2})} there is a path $b: T \mapsto \R$ whose restrictions to $I$ coincide with $b_I$. 


\begin{definition}
	We say that a family $\{\nu_I:  I \in \hT \}$ of finite dimensional L\'evy measures is consistent when it satisfies condition {\rm (c\ref{c3})}.
\end{definition}
  
  \medskip
  
  It should be noted that a consistent family of finite dimensional L\'evy measures is not necessarily a projective system.

\begin{example}
Let $T=\N$ and let $X=\{X_n \}_{n\in \N}$ be an i.i.d. sequence of Poisson random variables with mean 1. The L\'evy measure of $X_{\{1,\dots,n \}}=(X_1,\dots,X_n)$ is given by
	$$
	\nu_{\{1,\dots,n \}} = \sum_{k=1}^{n} \delta_{0_{\{1,\dots,k-1\}}} \otimes \delta_1 \otimes \delta_{0_{\{k+1,\dots,n\}}}\, .
	$$
	Therefore, if $I= \{1,\dots,n \}$ and $J=\{1,\dots,r \}$ with $I \subset J$,
$$
\nu_J \circ \pi_{IJ}^{-1} = \nu_I + (r-n) \delta_{0_I}\, , 
$$	
which shows that $\{\nu_I:  I \in \hT \}$ is not a projective system of measures.	
\end{example}

This fact makes ``glueing together'' $\nu_I$'s  more complicated than it would be for projective systems. Nevertheless, we have the following. 

\bigskip

\begin{theorem}\label{t:LK-P}
	Let $Y=(Y_{t})_{t\in T}$ be a Poissonian infinitely divisible process as in \eqref{LK-fidi-P}.  Then there exist a unique L\'evy measure $\nu$ on $(\R^T, \cB^T)$ and a shift function $b \in \R^T$ such that for every $I \in  \hT$ and $a \in \R^I$
\begin{equation}
	\label{LK-P}
  \E \exp i \sum_{t\in I} a_t Y_{t} \\
  =  \exp \left\{\int_{\R^T} (e^{i\langle a,x_I \rangle } - 1 - i \langle a, \lt x_I\rt \rangle) ~\nu(dx) + i \langle a, b_I \rangle  \right\}\, .
\end{equation} 
Therefore, for any consistent system of L\'evy measures $\{\nu_I:  I \in \hT \}$ there exists a unique L\'evy measure $\nu$ on $(\R^T, \cB^T)$ such that 
\begin{equation} \label{LM-cons}
  \nu \circ \pi_I^{-1} = \nu_I \quad \text{on }  \cB^I_0, \ I \in \hT.
\end{equation}
Furthermore, if $\rho$ is a measure such that $\rho \circ \pi_I^{-1} = \nu_I$ on $ \cB^I_0$ for all $I\in \hT$, then 
$\nu=\rho^0 \le \rho$. \\ (Cf. Lemma \ref{l:nu*}.)  
\end{theorem}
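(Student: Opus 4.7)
The plan is to construct $\nu$ in three stages: first for countable restrictions of $T$ (where the state space is Polish), then on the $\sigma$-ring $\cB_0^T$ of ``non-trivial'' cylinder sets, and finally extend to all of $\cB^T$ via Lemma \ref{l:nu*}. The obstacle to keep in mind is that $\{\nu_I\}_{I\in\hT}$ is \emph{not} a projective system, as the preceding Poisson example shows: mass accumulates at the origin under projections. The fix is that once one restricts to $\cB_0^I$, projective consistency holds by (c\ref{c3}).

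First, for every $J\in\hT_c$ the space $\R^J$ is Polish and $Y_J$ is Poissonian infinitely divisible in $\R^J$, so the classical L\'evy--Khintchine theorem on Polish product spaces gives a unique measure $\nu^J$ on $(\R^J,\cB^J)$ with $\nu^J\{0_J\}=0$ representing $Y_J$. Uniqueness at every finite-dimensional level then propagates to the cross-consistency $\nu^K\circ\pi_{JK}^{-1}=\nu^J$ on $\cB_0^J$ for all countable $J\subset K$.

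Next, since every $B\in\cB^T$ is determined by countably many coordinates, one has the identification $\cB_0^T=\bigcup_{J\in\hT_c}\pi_J^{-1}(\cB_0^J)$, which is a $\sigma$-ring inside $\cB^T$ (complementation would reintroduce $0_T$). Define $\mu(\pi_J^{-1}(B)):=\nu^J(B)$ for $B\in\cB_0^J$; well-definedness across different representations follows from Step~1 after passing to a common enlargement $J_1\cup J_2$. The crucial $\sigma$-additivity check on $\cB_0^T$ proceeds as follows: for a disjoint countable family $\{A_n\}\subset\cB_0^T$ with union in $\cB_0^T$, write $A_n=\pi_{J_n}^{-1}(B_n)$, set $J:=\bigcup_n J_n$ (still countable), so all sets lie in $\pi_J^{-1}(\cB_0^J)$ and the claim reduces to $\sigma$-additivity of $\nu^J$. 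Setting $\nu:=\mu^0$ in the sense of Lemma \ref{l:nu*} then yields a measure on $\cB^T$ satisfying (L\ref{LM0}) and coinciding with $\mu$ on $\cB_0^T$ by \eqref{nu_0}; condition (L\ref{LM}) is inherited from the one-dimensional L\'evy measures $\nu_{\{t\}}$.

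The L\'evy--Khintchine formula \eqref{LK-P} is then immediate because the integrand vanishes on $\pi_I^{-1}(0_I)$, so the $\nu$-integral collapses to the integral over $\pi_I^{-1}(0_I)^c\in\cB_0^T$, which by construction equals the finite-dimensional $\nu_I$-integral in \eqref{LK-fidi-P}. For uniqueness and the sandwich claim, any candidate measure $\nu'$ (respectively $\rho$) satisfying $\nu'\circ\pi_I^{-1}=\nu_I$ on $\cB_0^I$ for all $I\in\hT$ must agree with $\mu$ on every $\pi_J^{-1}(\cB_0^J)$ (via the same countable-refinement argument), hence on all of $\cB_0^T$; therefore $(\nu')^0=\rho^0=\nu$ by \eqref{nu_0}, which gives uniqueness, and the pointwise bound $\nu=\rho^0\le\rho$ follows because $\rho_*(A\setminus 0_T)\le\rho(A)$. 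The main technical subtlety is the $\sigma$-additivity verification of $\mu$ on $\cB_0^T$, which is exactly what forces the passage through countable index sets rather than working with $\hT$ alone.
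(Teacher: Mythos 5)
Your architecture is genuinely different from the paper's and its second half is correct. The paper never passes through countably infinite index sets: it builds, for each \emph{finite} $J\in\hT$ and $\epsilon>0$, a finite projective system $\lambda_I^{J,\epsilon}(B)=\nu_{I\cup J}\bigl(\pi^{-1}_{I\cup J,\,I}(B)\cap\pi^{-1}_{I\cup J,\,J}(U)\bigr)$ with $U=\{y:\max_{t\in J}|y_t|>\epsilon\}$, applies Kolmogorov extension to each to get finite measures $\lambda^{J,\epsilon}$ on $(\R^T,\cB^T)$, and defines $\nu$ as the supremum of this directed family, verifying (L\ref{LM0}), the consistency \eqref{LM-cons}, and minimality directly for the supremum. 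Your route --- the $\sigma$-ring $\cB_0^T=\bigcup_{J\in\hT_c}\pi_J^{-1}(\cB_0^J)$, well-definedness and $\sigma$-additivity of $\mu$ via a common countable refinement, extension by the inner-measure operation, and the uniqueness/minimality argument (which coincides with the paper's Step 4) --- is sound, modulo the small point that $\mu$ lives on a $\sigma$-ring rather than on $\cB^T$, so you should define $\nu(A):=\sup_{J\in\hT}\mu\bigl(A\setminus\pi_J^{-1}(0_J)\bigr)$ directly and note that the proof of Lemma \ref{l:nu*} applies verbatim to this directed supremum.

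The gap is your Step 1. There is no off-the-shelf ``classical L\'evy--Khintchine theorem on Polish product spaces'' that hands you, for countably infinite $J$, a measure $\nu^J$ on $(\R^J,\cB^J)$ with $\nu^J\{0_J\}=0$ and $\nu^J\circ\pi_{IJ}^{-1}=\nu_I$ on $\cB_0^I$: the classical theorem lives on $\R^d$, and the entire difficulty of gluing the $\nu_I$ together --- their failure to be a projective system, with mass accumulating at the origin under projection, exactly as in the i.i.d.\ Poisson example preceding the theorem --- is already fully present when $J$ is countably infinite. Your reduction therefore relocates the theorem's central construction into a black box rather than eliminating it. To close the gap you must prove the countable case, and the natural argument is precisely the paper's truncation device specialized to $J=\{j_1,j_2,\dots\}$: by (c\ref{c3}) the traces of the sought measure on the sets $\{x:\max_{i\le n}|x(j_i)|>\epsilon\}$ form finite projective systems (these sets avoid the origin), Kolmogorov extension applies to each, $\nu^J$ is the supremum over $n$ and $\epsilon$, and $\sigma$-additivity and uniqueness follow from the exhaustion $\{x:\max_{i\le n}|x(j_i)|>\epsilon\}\uparrow\R^J\setminus\{0_J\}$. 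Once this is supplied (or a precise reference, e.g.\ to Maruyama's $\sigma$-ring construction \cite{Maruyama70}, is substituted), the rest of your proof goes through.
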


\bigskip

\begin{corollary}[L\'evy-Khintchine representation]\label{LKR}
Let $X=\(X_t \)_{t \in T}$ be an infinitely divisible process. Then there exist a unique triplet $(\Sigma, \nu, b)$ consisting of a non-negative definite function $\Sigma$ on $T \times T$, a L\'evy measure $\nu$ on $(\R^T, \cB^T)$  and a function $b\in \R^T$ such that for every $I \in \hat T$ and $a \in \R^I$
\begin{equation} \label{LK}
  \E \exp i \sum_{t\in I} a_t X_{t} =  \exp \left\{-\frac{1}{2} \langle a, \Sigma_I a \rangle  + \int_{\R^T} (e^{i\langle a,x_I \rangle } - 1 - i \langle a, \lt x_I\rt \rangle) ~\nu(dx) + i \langle a, b_I \rangle  \right\}\, ,
\end{equation}
where $\Sigma_I$ is the restriction of $\Sigma$ to $I \times I$.  $(\Sigma, \nu, b)$ is called the generating triplet of $X$. Conversely, given a generating triplet $(\Sigma, \nu, b)$ as above, there exists an infinitely divisible process $X=\(X_t \)_{t \in T}$ satisfying \eqref{LK}.	
\end{corollary}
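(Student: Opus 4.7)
The plan is to reduce the corollary to the Poissonian case already handled by Theorem \ref{t:LK-P}, via the Gaussian/Poissonian decomposition \eqref{G+Y} that the excerpt has just established from the Kolmogorov Extension Theorem. So the structure will be: first produce the triplet, then verify the formula, then prove uniqueness, and finally handle the converse.

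For existence, I would start from the finite-dimensional L\'evy-Khintchine triplets $(\Sigma_I,\nu_I,b_I)$, $I\in\hT$. Consistency conditions (c\ref{c1}) and (c\ref{c2}) let me unambiguously define $\Sigma(s,t)=\Sigma_{\{s,t\}}(s,t)$ on $T\times T$ and $b(t)=b_{\{t\}}$ on $T$; non-negative definiteness of $\Sigma$ is inherited from each $\Sigma_I$. The consistency (c\ref{c3}) says that $\{\nu_I\}_{I\in\hT}$ is a consistent family in the sense of the definition preceding Theorem \ref{t:LK-P}, so that theorem produces a unique L\'evy measure $\nu$ on $(\R^T,\cB^T)$ with $\nu\circ\pi_I^{-1}=\nu_I$ on $\cB^I_0$ for all $I\in\hT$.

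To get \eqref{LK}, I fix $I\in\hT$ and $a\in\R^I$. The integrand $\psi_a(x):=e^{i\langle a,x_I\rangle}-1-i\langle a,\lt x_I\rt\rangle$ vanishes on $\pi_I^{-1}(0_I)$, so the integral with respect to $\nu$ on $\R^T$ equals the integral over $\R^T\setminus\pi_I^{-1}(0_I)$, which by the change-of-variables for $\pi_I$ and the identity $\nu\circ\pi_I^{-1}=\nu_I$ on $\cB^I_0$ coincides with $\int_{\R^I}(e^{i\langle a,y\rangle}-1-i\langle a,\lt y\rt\rangle)\nu_I(dy)$. Combining with the Gaussian and drift parts, \eqref{LK-fidi} becomes exactly \eqref{LK}. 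Uniqueness of $(\Sigma,\nu,b)$ then follows: any other triplet $(\Sigma',\nu',b')$ satisfying \eqref{LK} must induce the finite-dimensional triplets $(\Sigma_I,\nu_I,b_I)$ by finite-dimensional uniqueness in \cite{Sato(99)}, whence $\Sigma'=\Sigma$, $b'=b$, and $\nu'\circ\pi_I^{-1}=\nu_I$ on $\cB^I_0$; the uniqueness clause of Theorem \ref{t:LK-P} then forces $\nu'=\nu$.

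For the converse, given $(\Sigma,\nu,b)$, I would first use the Kolmogorov Extension Theorem applied to the family $\{N(0,\Sigma_I)\}_{I\in\hT}$ (consistent by (c\ref{c1})) to produce a centered Gaussian process $G$ with covariance $\Sigma$. For $Y$, I would define a consistent family of finite-dimensional L\'evy measures by $\nu_I:=\nu\circ\pi_I^{-1}|_{\cB^I_0}$, restored to full measures on $\R^I$ by setting mass at the origin to zero; together with $b_I:=b|_I$ these yield L\'evy-Khintchine characteristic functions on each $\R^I$ which are consistent and hence, again by Kolmogorov, produce an infinitely divisible process $Y$. Taking $G$ and $Y$ independent and setting $X:=G+Y$ gives a process whose characteristic functional satisfies \eqref{LK}. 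The only nontrivial point here is that the integrability condition (L\ref{LM}) for $\nu$ transfers to finite second-moment-truncation integrability of each $\nu_I$, which is immediate because $|y|^2\wedge 1\le\sum_{t\in I}|y(t)|^2\wedge 1$ on $\R^I$; so the main obstacle — matching a path-space L\'evy measure with the finite-dimensional L\'evy-Khintchine triplets — is already resolved by Theorem \ref{t:LK-P}, and the corollary is essentially a packaging result.
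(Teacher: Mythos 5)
Your proof is correct and follows exactly the route the paper intends: the paper gives no separate proof of Corollary \ref{LKR}, presenting it as an immediate consequence of Theorem \ref{t:LK-P} applied to the Poissonian part of the decomposition $X \eid G+Y$ from \eqref{G+Y}, with uniqueness of $\nu$ coming from the uniqueness clause of that theorem. Your writeup simply makes explicit the routine verifications (change of variables via $\nu\circ\pi_I^{-1}=\nu_I$ on $\cB^I_0$, transfer of the integrability condition, consistency for the converse) that the paper leaves to the reader.
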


Let $Y=(Y_{t})_{t\in T}$ be a Poissonian infinitely divisible process with L\'evy measure $\nu$. Let $N$ be a Poisson random measure on $(\R^T, \cB^T)$ having  intensity measure $\nu$. The existence of such $N$ follows from Kolmogorov's Extension Theorem. We will work with a stochastic integral of the form
\begin{equation} \label{int}
  I_N(f) = \int_{\R^T} f(x) [N(dx) - \chi(f(x)) \nu(dx)]\, ,
\end{equation}
where $\chi$ is a cutoff function defined at the beginning of this section. 
Since $\nu$ is not necessarily $\sigma$-finite, we take an extra care in handling  this integral. 

At the outset notice that the present development works for a Poisson random measure on any measure space $(S, \cS, n)$.  For the sake of concreteness we take here $(\R^T, \cB^T, \nu)$  and $\chi(v)=\1_{\{|v| \le 1 \}}$. Let $f = \sum_{j=1}^{n} a_j \1_{A_j}$, where $a_j \in \R$ and $A_j$ are disjoint with $\nu(A_j)< \infty$. We have 
$$
I_N(f) = \sum_{j \in J_0} a_j[N(A_j) - \nu(A_j)] + \sum_{j \in J_1} a_j N(A_j) = S_0+S_1
$$
where $J_0=\{j \le n:  |a_j|\le 1\}$ and $J_1=\{j \le n:  |a_j| > 1\}$. Hence
\begin{align*} 
  \E (|I_N(f)| \wedge 1 ) &\le \E (|S_0| \wedge 1 ) + \E (|S_1| \wedge 1 )
  \le (\E |S_0|^2 )^{1/2} + \E (|S_1| \wedge 1 ) \\
  & \le (\sum_{j \in J_0} a_j^2 \nu(A_j))^{1/2} + \sum_{j \in J_1} \E (|a_j N(A_j)|\wedge 1) \\
  & \le ( \int |f|^2 \wedge 1 \, d\nu)^{1/2} + \sum_{j \in J_1} \E[ (|a_j|^2\wedge 1) N(A_j) ] \\
  & \le ( \int |f|^2 \wedge 1 \, d\nu)^{1/2} +  \int |f|^2 \wedge 1 \, d\nu\, .
\end{align*}
Since $\E (|I_N(f)| \wedge 1 ) \le 1$, we infer that
\begin{equation} \label{PoisInt}
  \E (|I_N(f)| \wedge 1 ) \le 2 \( \int |f|^2 \wedge 1 \, d\nu\)^{1/2}.
\end{equation}
A change of the cut-off function $\chi$ will result in a change of constant  2  to another universal constant. 
Therefore, the integral in \eqref{int} is well-defined for any measurable $f: \R^T \mapsto \R$  such that $ \int |f|^2 \wedge 1 \, d\nu < \infty$ by the standard approximation procedure.  $I_N(f)$  is a Poissonian infinitely divisible random variable with the characteristic function 
\begin{equation} \label{int-cf}
  \E \exp\(i \theta I_N(f) \) = \exp\( \int_{\R^T} \(e^{i\theta f(x)} - 1 - i \theta f(x) \chi(f(x)) \)  \, \nu(dx)\). 
\end{equation}
 
 \bigskip
 
\begin{proposition}\label{p:can-spe-rep}
	Let $Y=(Y_{t})_{t\in T}$ be a Poissonian infinitely divisible process with L\'evy measure $\nu$ and a shift function $b$. Let $N$ be a Poisson random measure on $(\R^T, \cB^T)$ having  intensity measure $\nu$. Then the process $\widetilde{Y}= (\widetilde{Y}_{t})_{t\in T}$ given by
\begin{equation} \label{can}
  \widetilde{Y}_t = \int_{\R^T} x(t) [N(dx) - \chi(x(t)) \nu(dx)] + b(t), \quad t \in T
\end{equation}
has the same distribution as $Y$. $\widetilde{Y}$ will be called a canonical spectral representation of $Y$. 
\end{proposition}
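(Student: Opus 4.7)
The plan is to show that $\widetilde{Y}$ has the same finite-dimensional distributions as $Y$ by matching characteristic functions against the L\'evy-Khintchine form \eqref{LK-P}. Fix $I \in \hT$ and $a \in \R^I$. The first step is to verify that each coordinate $\widetilde{Y}_t = I_N(\pi_t) + b(t)$ is well-defined, where $\pi_t(x)=x(t)$. This is an application of \eqref{PoisInt}: the finiteness $\int (|x(t)|^2 \wedge 1)\, \nu(dx) < \infty$ is exactly (L\ref{LM}), so $I_N(\pi_t)$ exists as the $L^0$-limit of stochastic integrals of simple functions.

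Next I would compute the joint characteristic function $\E \exp(i\sum_{t\in I} a_t \widetilde{Y}_t)$ by viewing $(I_N(\pi_t))_{t\in I}$ as a vector-valued Poisson integral. The approximation that worked for a single coordinate can be carried out simultaneously for all $t \in I$: partition $\R^T$ into sets on which each $\pi_t$ is well-approximated by a constant, and on such common-refinement simple approximations $\vec{g}^{(n)}(x) = \sum_j \vec{c}_j^{(n)}\1_{A_j^{(n)}}$ (with $\nu(A_j^{(n)})<\infty$) the joint CF factors across $j$ using independence of $N$ on disjoint sets and the Poisson CF:
\begin{equation}
\E \exp\Bigl(i\sum_{t\in I} a_t I_N(g_t^{(n)})\Bigr) = \exp\Bigl(\int [e^{i\langle a, \vec{g}^{(n)}(x)\rangle} - 1 - i\sum_{t\in I} a_t g_t^{(n)}(x)\chi(g_t^{(n)}(x))]\,\nu(dx)\Bigr).
\end{equation}
Using $\sum_{t\in I}(|x(t)|^2 \wedge 1) \in L^1(\nu)$ as the dominating quantity together with the bound \eqref{PoisInt}, both convergence of $(I_N(g_t^{(n)}))$ to $I_N(\pi_t)$ in probability and the dominated convergence for the exponent are justified, yielding
\begin{equation}
\E \exp\Bigl(i\sum_{t\in I} a_t I_N(\pi_t)\Bigr) = \exp\Bigl(\int_{\R^T} \bigl[e^{i\langle a, x_I\rangle} - 1 - i\langle a, \lt x_I\rt\rangle \bigr]\,\nu(dx)\Bigr),
\end{equation}
since $\langle a, \lt x_I\rt\rangle = \sum_{t\in I} a_t x(t)\chi(x(t))$.

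Adding the deterministic contribution $i\langle a, b_I\rangle$ from the shift $b(t)$ gives exactly the right-hand side of \eqref{LK-P}, hence the finite-dimensional distributions of $\widetilde{Y}$ agree with those of $Y$ for every $I \in \hT$, so $\widetilde{Y} \eid Y$.

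The main technical obstacle is handling the potentially non-$\sigma$-finite L\'evy measure $\nu$ while carrying out the approximation step: the simple functions approximating $\pi_t$ must be supported on sets of finite $\nu$-measure, and one must argue that the joint approximations $\vec{g}^{(n)}$ can be chosen coherently (e.g., via a common finite-measure exhausting sequence dictated by the sets $\{|x(t)|>1/n\}$ for $t\in I$). Once a common dominating integrand is produced from (L\ref{LM}) for the finitely many coordinates in $I$, both the probabilistic convergence of the vector integrals and the dominated convergence of the deterministic exponents proceed in parallel to the scalar case established before the proposition.
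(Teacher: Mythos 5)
Your argument is correct, but it takes a different route from the paper's. You recompute the joint characteristic function of $(I_N(\pi_t))_{t\in I}$ from scratch, via vector-valued simple approximations on a common refinement and a passage to the limit dominated by $\sum_{t\in I}(|x(t)|^2\wedge 1)\in L^1(\nu)$. The paper instead reduces everything to the scalar case already settled before the proposition: writing $f(x)=\sum_{t\in I}a_t x(t)$, linearity of the Poisson integral gives
\begin{equation}
\sum_{t\in I} a_t\,I_N\big(x(t)\big) \;=\; I_N(f) \;+\; \int_{\R^T}\sum_{t\in I} a_t x(t)\,\big[\chi(f(x))-\chi(x(t))\big]\,\nu(dx)\,,
\end{equation}
i.e.\ a single scalar integral plus a deterministic correction accounting for the mismatch between centering with $\chi(f(x))$ and centering with $\chi(x(t))$; the scalar formula \eqref{int-cf} then yields \eqref{LK-P} in two lines. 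Your approach buys self-containedness and makes the approximation mechanics explicit, at the cost of having to justify the coherent choice of simple approximants supported on finite-$\nu$-measure sets (which you do correctly, using the sets $\{|x(t)|>1/n\}$ for the finitely many $t\in I$, whose finite measure follows from (L\ref{LM})); the paper's approach buys brevity and avoids setting up any multivariate integration, but hides a small absolute-convergence check for the correction integral. Both are sound, and both ultimately rest on \eqref{PoisInt} and (L\ref{LM}).
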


\bigskip


\subsection{Sigma-finiteness of L\'evy measures}\label{s:sf}

The $\sigma$-finiteness of measures is an important property but not every L\'evy measure is $\sigma$-finite. This is shown in the following simple example. 
\medskip

\begin{example}\label{IPois}
{\rm 
Let $X=(X_t)_{t\in T}$ be un uncountable family of independent Poisson random variables with parameter 1. 
Then, for every $I \in \hat T$,  $a \in \R^I$,
\begin{equation} 
  \E \exp i\sum_{t \in I} a_t X_{t} = \exp \left[\sum_{t \in I} (e^{i a_{t} } - 1) \right] = 
  \exp \left[ \int_{\R^T} (e^{i \langle a, x_I \rangle } - 1)\, \nu(dx) \right],
\end{equation}
where $\nu$ is the counting measure of a set $E$ given by
$$
E = \{e_s \in \R^T:  s \in T, \ e_s(t)=1 \ \text{if} \ t=s \ \text{and} \ e_s(t)=0 \ \text{otherwise}\}.  
$$
We have $\int_{\R^T} |x(t)|^2 \wedge 1 \, \nu(dx) = |e_t(t)|^2 =1$ for every 
$t \in T$.  For $T_0 \in \hT_c$ choose $t \notin T_0$, and consider $A= \{x:  x_{T_0} = 0,  x(t) \ne 0\}$. Since $A \cap E = \{e_t\}$, we have $\nu(A)=1>0$, so that $\nu$ is L\'evy measure of $X$ by Lemma \ref{LM-equiv}(c).   
However, $\nu$ is not $\sigma$-finite as the counting measure  of an uncountable set.
}
\end{example}

\bigskip

The next theorem gives criteria when a L\'evy measure is $\sigma$-finite. Notice a subtle difference between (L\ref{LM0}) and $(ii)$.

\begin{theorem}\label{t:cLM}
Let $\nu$ be a L\'evy measure on $(\R^T, \cB^T)$. The following are equivalent:
\begin{itemize}
  \item[{ (i)}]  $\nu$ is  $\sigma$-finite;
  
   \item[{ (ii)}] $\nu^{\ast}(0_T)=0$, where $\nu^{\ast}$ is the outer measure;
   
  \item[{ (iii)}] $   \nu\{x\in \R^T: x_{T_{0}} = 0\}=0$ for some  $T_0  \in \hT_c$ (i.e., \eqref{cLM} holds).
\end{itemize} 
\end{theorem}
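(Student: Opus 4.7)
My plan is to establish the implications in the order (iii)$\Rightarrow$(ii), (iii)$\Rightarrow$(i), (ii)$\Rightarrow$(iii), and finally (i)$\Rightarrow$(iii), the last being the substantive step. The two implications leaving (iii) are immediate: because $T_0\in\hT_c$ is countable, the cylinder $\{x:x_{T_0}=0\}=\pi_{T_0}^{-1}(\{0_{T_0}\})$ belongs to $\cB^T$ and contains $0_T$, so $\nu^{\ast}(0_T)\le \nu\{x:x_{T_0}=0\}=0$, giving (ii); and
$$
\R^T = \{x:x_{T_0}=0\} \cup \bigcup_{t\in T_0}\bigcup_{n\ge 1}\{x:|x(t)|\ge 1/n\}
$$
decomposes $\R^T$ into a null set plus countably many pieces of finite $\nu$-measure by (L\ref{LM}), yielding $\sigma$-finiteness.

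For (ii)$\Rightarrow$(iii) I would use $\nu^{\ast}(0_T)=0$ to extract $B_n\in\cB^T$ with $0_T\in B_n$ and $\nu(B_n)\downarrow 0$. Each $B_n$ depends only on some countable coordinate set $T_n$, so $B_n=\pi_{T_n}^{-1}(\tilde B_n)$ with $0_{T_n}\in\tilde B_n$. Pooling into $T_0:=\bigcup_n T_n\in\hT_c$, any $x$ satisfying $x_{T_0}=0$ obeys $x_{T_n}=0_{T_n}\in\tilde B_n$ for every $n$, so $\{x:x_{T_0}=0\}\subset B_n$ for all $n$, forcing $\nu\{x:x_{T_0}=0\}\le \inf_n \nu(B_n)=0$.

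The main work is (i)$\Rightarrow$(iii). From a $\sigma$-finite decomposition $\R^T=\bigcup_n A_n$ with $A_n\in\cB^T$ and $\nu(A_n)<\infty$, I would pool the countable coordinate supports of the $A_n$'s into one $T_0\in\hT_c$ and write $A_n=\pi_{T_0}^{-1}(B_n)$. Since $0_T\in A_{n_0}$ for some $n_0$, the cylinder $C_0:=\{x:x_{T_0}=0\}\subset A_{n_0}$ has $\nu(C_0)<\infty$. I now iterate. Given a countable $T_k\supset T_0$ and $C_k:=\{x:x_{T_k}=0\}\in\cB^T$, (L\ref{LM0}) combined with Lemma~\ref{l:nu*} gives
$$
\nu(C_k)=\nu_{\ast}(C_k\setminus 0_T)=\sup_{J\in\hT} \nu\bigl(C_k\cap\{x:x_J\ne 0\}\bigr) = \sup_{J\in\hT,\,J\cap T_k=\emptyset} \nu\bigl(C_k\cap\{x:x_J\ne 0\}\bigr),
$$
the last equality because $x_{T_k}=0$ on $C_k$ lets me replace any $J$ by $J\setminus T_k$. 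If $\nu(C_k)>0$, I would pick a finite $J_{k+1}\subset T\setminus T_k$ realizing more than half the supremum and set $T_{k+1}:=T_k\cup J_{k+1}$, which forces $\nu(C_{k+1})<\tfrac12\nu(C_k)$. Then $T_\infty:=\bigcup_k T_k\in\hT_c$ and $C_\infty:=\{x:x_{T_\infty}=0\}=\bigcap_k C_k$; continuity of $\nu$ from above (applicable since $\nu(C_0)<\infty$) gives $\nu(C_\infty)=\lim_k\nu(C_k)=0$, which is (iii) with $T_\infty$ in place of $T_0$.

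The hard part is the iteration in (i)$\Rightarrow$(iii): one application of (L\ref{LM0}) only exposes finitely many extra coordinates where mass of $C_k$ concentrates and halves the cylinder's measure, so a countable iteration is needed to reach zero. The finiteness $\nu(C_0)<\infty$ supplied by $\sigma$-finiteness is essential for the concluding continuity-of-measure step; without it the halving would not a priori force the limit to vanish.
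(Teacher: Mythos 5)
Your proof is correct, and two of its three substantive implications --- (ii)$\Rightarrow$(iii) and (iii)$\Rightarrow$(i) --- coincide with the paper's. The only real divergence is how you come back from $\sigma$-finiteness: the paper closes a cycle via (i)$\Rightarrow$(ii), observing that for a single $A\in\cB^T$ with $0_T\in A$ and $\nu(A)<\infty$, condition (L\ref{LM0}) together with \eqref{nu*} gives $\nu(A)=\sup_{J\in\hT}\nu(A\setminus\pi_J^{-1}(0_J))$, hence $\inf_{J\in\hT}\nu(A\cap\pi_J^{-1}(0_J))=0$ and $\nu^{\ast}(0_T)=0$ in one stroke. Your direct (i)$\Rightarrow$(iii) instead runs a halving iteration; it is sound (the finiteness $\nu(C_0)<\infty$ you isolate does legitimize both the subtraction $\nu(C_{k+1})=\nu(C_k)-\nu(C_k\cap\{x:x_{J_{k+1}}\ne 0\})$ and the final continuity-from-above step), but the iteration is not actually forced on you: since Lemma~\ref{l:nu*} exhibits $\nu(C_0)$ as \emph{exactly} $\sup_{J\in\hT}\nu(C_0\cap\{x:x_J\ne 0\})$, not merely as at most twice any such term, you could pick $J_n$ with $\nu(C_0\cap\{x:x_{J_n}=0\})<1/n$ and pool $T_0\cup\bigcup_n J_n$ at once --- which is in effect what the paper's (i)$\Rightarrow$(ii)$\Rightarrow$(iii) route does. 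So your closing remark that a countable iteration with halving is ``needed'' overstates the difficulty; otherwise the argument is complete and uses the same essential ingredients (Lemma~\ref{l:nu*}, coordinate pooling, and (L\ref{LM}) for the easy direction).
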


\bigskip

\begin{corollary}\label{c:unc}
	A L\'evy measure is not $\sigma$-finite if and only if for every  $T_0 \in \hT_c$ there exists $t \notin T_0$ such that $\nu\{x: x_{T_0} = 0, \ x(t)\ne 0\}>0$.
\end{corollary}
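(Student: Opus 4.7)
The plan is to obtain the corollary directly by taking the contrapositive of Theorem~\ref{t:cLM}, using condition~(iii) as the relevant characterization, and then invoking Lemma~\ref{LM-equiv}(b) to ``promote'' the mere nonvanishing of $\nu\{x_{T_0}=0\}$ into the existence of a distinguished coordinate $t\notin T_0$ witnessing that nonvanishing.

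First I would note the easy direction. Suppose the right-hand side of the corollary holds: for every $T_0\in\hT_c$ there is $t\notin T_0$ with $\nu\{x: x_{T_0}=0,\ x(t)\ne 0\}>0$. Then in particular
\[
  \nu\{x: x_{T_0}=0\} \ge \nu\{x: x_{T_0}=0,\ x(t)\ne 0\}>0
\]
for every $T_0\in\hT_c$, so condition (iii) of Theorem~\ref{t:cLM} fails, hence $\nu$ is not $\sigma$-finite.

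Conversely, assume $\nu$ is not $\sigma$-finite. By Theorem~\ref{t:cLM}, (iii) fails, meaning that for every $T_0\in\hT_c$ we have $\nu\{x: x_{T_0}=0\}>0$. Now fix such a $T_0$. Since $\nu$ is a L\'evy measure it satisfies (L\ref{LM0}), and by Lemma~\ref{LM-equiv} condition (b) holds. Applying (b) to this $T_0$ (which has strictly positive $\nu$-mass on $\{x_{T_0}=0\}$) produces an index $t\notin T_0$ with
\[
  \nu\{x\in\R^T: x_{T_0}=0,\ x(t)\ne 0\}>0,
\]
which is exactly what we need.

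There is no real obstacle here — the entire content lies in having the correct equivalent formulation of (L\ref{LM0}) available, namely Lemma~\ref{LM-equiv}(b), which is precisely the tool that converts the global statement ``$\{x_{T_0}=0\}$ has positive $\nu$-mass'' into the existence of a single coordinate $t$ responsible for it. Once Theorem~\ref{t:cLM} and Lemma~\ref{LM-equiv} are in hand, the corollary is just their combined contrapositive, and I would not expect to need more than the few lines sketched above.
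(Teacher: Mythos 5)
Your proof is correct and follows essentially the same route as the paper: the paper's one-line proof combines Theorem~\ref{t:cLM} with the dichotomy in Lemma~\ref{LM-equiv}(c), while you use the logically equivalent conditional form Lemma~\ref{LM-equiv}(b) together with the negation of Theorem~\ref{t:cLM}(iii). The two arguments are the same in substance, and your version is just a slightly more explicit unpacking of the dichotomy.
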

\np
{\bf Proof:} Condition (c) of Lemma \ref{LM-equiv} divides L\'evy measure into two categories: those which satisfy \eqref{cLM}, which are $\sigma$-finite by Theorem \ref{t:cLM}, and the others which satisfy the condition of this corollary. 
\qed

\bigskip

We may ask what Poissonian processes do not have $\sigma$-finite L\'evy measures? The next theorem characterizes them.

\begin{theorem}\label{n-sf}
Let $Y=(Y_{t})_{t\in T}$ be a Poissonian infinitely divisible process with L\'evy measure $\nu$. Then $\nu$ is not $\sigma$-finite if and only if $T$ is uncountable and there is a version $\widetilde{Y}= (\widetilde{Y}_t)_{t \in T}$ of the process  $Y$ such that for every  $T_0  \in \hT_c$ there exist $t_1 \notin T_0$ and independent random variables $\xi$ and $\eta$ such that 
	\begin{itemize}
  \item[(a)] $\widetilde{Y}_{t_1} = \xi + \eta$; \smallskip
    \item[(b)]  $(\widetilde{Y}_t, \xi, \eta: t \in T_0)$ are jointly Poissonian infinitely divisible; \smallskip
  \item[(c)] $\eta$ is non-degenerate and independent of $(\widetilde{Y}_t, \xi: t\in T_0)$\, .
\end{itemize}
\end{theorem}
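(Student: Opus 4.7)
My plan is to reduce both implications to Corollary \ref{c:unc}, which says $\nu$ fails $\sigma$-finiteness iff for every $T_0 \in \hT_c$ there exists $t \notin T_0$ with $\nu\{x: x_{T_0}=0,\ x(t) \ne 0\} > 0$, and to exploit the canonical spectral representation from Proposition \ref{p:can-spe-rep}. That representation expresses $Y$ as stochastic integrals against a single Poisson random measure $N$ of intensity $\nu$, so splitting $N$ along complementary measurable sets automatically yields independent pieces that are jointly Poissonian infinitely divisible.

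For the direct implication, first observe that if $T$ is countable then $0_T \in \cB^T$ and (L\ref{LM0}) forces $\nu(0_T)=0$, so Theorem \ref{t:cLM}(iii) (with $T_0=T$) yields $\sigma$-finiteness; thus the hypothesis forces $T$ uncountable. Take $\widetilde{Y}$ to be the canonical spectral representation of $Y$. Given any $T_0 \in \hT_c$, Corollary \ref{c:unc} provides $t_1 \notin T_0$ with $\nu(A)>0$ for
\[
A := \{x \in \R^T:\ x_{T_0}=0,\ x(t_1)\ne 0\},
\]
and $A \in \cB^T$ because $T_0$ is countable. Define
\[
\eta := \int_A x(t_1)\,[N(dx)-\chi(x(t_1))\nu(dx)], \qquad \xi := \int_{A^c} x(t_1)\,[N(dx)-\chi(x(t_1))\nu(dx)] + b(t_1).
\]
Then $\widetilde Y_{t_1}=\xi+\eta$, which is (a). For every $t \in T_0$ the integrand $x(t)\1_A$ vanishes, so $\widetilde Y_t$ is measurable with respect to $N|_{A^c}$; since $N|_A$ and $N|_{A^c}$ are independent Poisson measures, (c) follows, and $\eta$ is non-degenerate because the image of $\nu|_A$ under $x\mapsto x(t_1)$ is a non-zero L\'evy measure on $\R$. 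Condition (b) holds because all variables involved are stochastic integrals against the same $N$, so their joint finite-dimensional characteristic functions are of Poissonian L\'evy-Khintchine form.

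For the converse, suppose $\nu$ is $\sigma$-finite and fix $T_0 \in \hT_c$ with $\nu\{x: x_{T_0}=0\}=0$ (Theorem \ref{t:cLM}(iii)). Assume for contradiction that some version $\widetilde Y$, some $t_1 \notin T_0$, and some $\xi,\eta$ satisfy (a)--(c). By (b), the countable-index vector $Z:=(\widetilde Y_t, \xi, \eta: t \in T_0)$ is jointly Poissonian infinitely divisible, hence by Theorem \ref{t:LK-P} has a L\'evy measure $\widetilde\nu$ on $\R^{T_0}\times\R^2$. The independence in (c) forces the splitting $\widetilde\nu=\nu_1+\nu_2$, where $\nu_1$ is supported on $\R^{T_0}\times\R\times\{0\}$ and $\nu_2$ on $\{0_{T_0}\}\times\{0\}\times\R$; moreover $\nu_2$ is non-zero, being the L\'evy measure of the non-degenerate Poissonian $\eta$. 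Pushing $\widetilde\nu$ forward under $(y,u,v)\mapsto(y,u+v)$ produces the L\'evy measure of $(\widetilde Y_t: t \in T_0\cup\{t_1\})$, and the $\nu_2$-contribution assigns strictly positive mass to $\{y: y_{T_0}=0,\ y(t_1)\ne 0\}$. But by the uniqueness statement in Theorem \ref{t:LK-P} this L\'evy measure coincides on $\cB^{T_0\cup\{t_1\}}_0$ with $\nu \circ \pi_{T_0\cup\{t_1\}}^{-1}$, whose mass on that set is bounded by $\nu\{x: x_{T_0}=0\}=0$ --- contradiction.

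The main subtlety is the decomposition $\widetilde\nu=\nu_1+\nu_2$ under independence: it is the classical fact that independent Poissonian components have L\'evy measure concentrated on complementary coordinate subspaces, but here it must be applied to a PID vector with \emph{countable} index set $T_0 \cup \{\xi\text{-coord},\eta\text{-coord}\}$. This is legitimate because every finite-dimensional marginal of $Z$ is PID and the uniqueness of the L\'evy-Khintchine triplet on $\R^n$ propagates to $\widetilde\nu$ via Theorem \ref{t:LK-P} and the consistency condition (c\ref{c3}).
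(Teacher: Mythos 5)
Your proof is correct and follows essentially the same route as the paper: the forward direction splits the Poisson random measure of the canonical spectral representation over the set $A=\{x: x_{T_0}=0,\ x(t_1)\ne 0\}$ supplied by Corollary \ref{c:unc} (your integral over $A^c$ equals the paper's integral over $\{x_{T_0}\ne 0,\ x(t_1)\ne 0\}$ since the integrand vanishes where $x(t_1)=0$), and the converse uses the additivity of L\'evy measures over independent Poissonian components together with the fact that the $\eta$-component contributes positive mass to $\{y: y_{T_0}=0,\ y(t_1)\ne 0\}$, contradicting $\sigma$-finiteness via Theorem \ref{t:cLM}(iii). The only cosmetic difference is that you phrase the converse as a contradiction starting from the joint L\'evy measure of $(\widetilde Y_{T_0},\xi,\eta)$ pushed forward under addition, whereas the paper decomposes $\widetilde Y_{T_1}=U+V$ directly; the content is identical.
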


\bigskip

\begin{remark}
Intuitively, a Poissonian infinitely divisible process has a $\sigma$-finite L\'evy  measure if and only if there exists $T_0 \in \hT_c$ such that outside of the index set $T_0$ the process  has no nontrivial independent components.
\end{remark}

\medskip

\begin{proposition}\label{c0:sep}
Let $Y=(Y_{t})_{t\in T}$ be a Poissonian infinitely divisible process such that there exists a countable $T_0 \subset  T$ such that every $Y_{t}$ is measurable with respect to $\bar\sigma(Y_{T_0})_{\P}$, the {$\P$-completion} of $\sigma(Y_{t}: t\in T_0)$. Then $Y$ has a $\sigma$-finite L\'evy measure.
\end{proposition}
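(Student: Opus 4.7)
The plan is to argue by contradiction via Theorem~\ref{n-sf}. Suppose $\nu$ is not $\sigma$-finite. After, if necessary, replacing $T_0$ by $T_0 \cup \{t^\ast\}$ for some $t^\ast \in T$ (which only enlarges $\bar\sigma(Y_{T_0})_{\P}$ and hence preserves the hypothesis), I may assume $T_0 \in \hT_c$. Theorem~\ref{n-sf} then supplies a version $\widetilde{Y}$ of $Y$, an index $t_1 \notin T_0$, and independent random variables $\xi,\eta$ such that $\widetilde{Y}_{t_1}=\xi+\eta$, $\eta$ is non-degenerate, and $\eta$ is independent of the whole family $(\widetilde{Y}_t,\xi : t\in T_0)$.

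Next I convert the measurability hypothesis into a statement about the joint law of $Y$. By Doob--Dynkin applied on the standard Borel space $(\R^{T_0},\cB^{T_0})$, measurability of $Y_{t_1}$ with respect to $\bar\sigma(Y_{T_0})_{\P}$ is equivalent to the existence of a measurable $g:(\R^{T_0},\cB^{T_0})\to(\R,\cB)$ with $Y_{t_1}=g(Y_{T_0})$ a.s., i.e.\ to $\cL(Y_{T_0},Y_{t_1})$ being concentrated on the graph of $g$. Since $\widetilde{Y}\eid Y$ as processes, this property transfers to $\widetilde{Y}$, giving $\widetilde{Y}_{t_1}=g(\widetilde{Y}_{T_0})$ a.s.

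Combining the two decompositions of $\widetilde{Y}_{t_1}$, $\eta = g(\widetilde{Y}_{T_0})-\xi$ a.s., so $\eta$ is $\sigma(\widetilde{Y}_{T_0},\xi)$-measurable. But $\eta$ is also independent of that $\sigma$-algebra, hence independent of itself, which forces $\P(\eta\in B)\in\{0,1\}$ for every Borel $B$. So $\eta$ is a.s.\ constant, contradicting its non-degeneracy, and thus $\nu$ must be $\sigma$-finite.

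The point that requires care is the transfer step: one must express the hypothesis ``$Y_{t_1}=g(Y_{T_0})$ a.s.'' purely as a property of the law of $Y$ so that it survives passage to the version $\widetilde{Y}$ of Theorem~\ref{n-sf}. This is why the distributional reformulation (concentration on the graph of $g$) is isolated as a separate step. Once that is in place, the rest is the short independent-of-itself argument; no further obstacles appear.
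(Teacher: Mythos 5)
Your proof is correct and follows essentially the same route as the paper's: contradiction via Theorem~\ref{n-sf}, Doob--Dynkin to write $Y_{t_1}=g(Y_{T_0})$ a.s., transfer of this relation to the version $\widetilde{Y}$, and then degeneracy of $\eta$. The only difference is cosmetic: the paper concludes that $\eta$ is deterministic by computing $\E[e^{iu(\xi+\eta)}\mid \widetilde{Y}_{T_0},\xi]$ and deducing $|\E e^{iu\eta}|=1$, whereas you use the independent-of-itself zero--one argument; your explicit justification of the transfer step (concentration of the joint law on the graph of $g$ is a distributional property) makes precise a point the paper passes over silently.
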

\np
{\bf Proof.} From the assumption, for every $t \in T$ there exists a Borel measurable function $\Phi_t: \R^{T_0} \mapsto \R$ such that $Y_t = \Phi_t(Y_{T_0})$ a.s. 
Suppose to the contrary that $\nu$ is not $\sigma$-finite, so by Theorem \ref{n-sf} there exist a version $\tilde{Y}$ and  $t_1 \notin T_0$ such that (a)--(c) hold. We also have $\tilde{Y}_{t_1} = \Phi_{t_1}(\tilde{Y}_{T_0})$ a.s.
Hence, for any $u \in \R$
$$
e^{iu(\xi+\eta)} = \E[ e^{iu(\xi+\eta)} \, |\, \widetilde{Y}_{T_0}, \xi] = e^{iu \xi} \E[ e^{iu\eta} \, |\, \widetilde{Y}_{T_0}, \xi] = e^{iu \xi} \E[ e^{iu\eta}]
$$
which gives $|\E[ e^{iu\eta}]|=1$, so that $\eta$ is deterministic. A contradiction. \qed

\begin{definition}\label{sep-P}
	A stochastic process $Y=(Y_t)_{t\in T}$ is said to be separable in probability if there exists  $T_0  \in \hT_c$ such that for any $t \in T$ there is a sequence $\{s_n \} \subset T_0$ such that $Y_{s_n} \cip Y_t$.
\end{definition}

\medskip

From Proposition \ref{c0:sep} we get

\begin{corollary}\label{c:sep}
A separable in probability Poissonian infinitely divisible process $Y$  has a $\sigma$-finite L\'evy measure.
\end{corollary}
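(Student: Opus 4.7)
The plan is to reduce Corollary \ref{c:sep} to Proposition \ref{c0:sep} by upgrading the hypothesis ``$Y_{s_n} \cip Y_t$ along some sequence $\{s_n\} \subset T_0$'' to ``$Y_t$ is measurable with respect to $\bar\sigma(Y_{T_0})_{\P}$.'' Once this upgrade is made, the conclusion is immediate from Proposition \ref{c0:sep}, which gives $\sigma$-finiteness of the L\'evy measure $\nu$ of $Y$.

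To carry out the upgrade, fix $T_0 \in \hT_c$ witnessing separability in probability, and fix an arbitrary $t \in T$. By definition there is a sequence $\{s_n\} \subset T_0$ with $Y_{s_n} \cip Y_t$. Appeal to the standard fact that convergence in probability implies almost sure convergence along a subsequence: there exists $\{s_{n_k}\} \subset \{s_n\}$ with $Y_{s_{n_k}} \to Y_t$ almost surely. Each $Y_{s_{n_k}}$ is $\sigma(Y_{T_0})$-measurable, hence the a.s.\ limit $Y_t$ is measurable with respect to the $\P$-completion $\bar\sigma(Y_{T_0})_{\P}$. Since $t \in T$ was arbitrary, the hypothesis of Proposition \ref{c0:sep} is satisfied, and that proposition then yields $\sigma$-finiteness of $\nu$.

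There is no real obstacle here; the only substantive point is the classical subsequence extraction, and everything else is an invocation of the previously established Proposition \ref{c0:sep}. The corollary can therefore be stated and proved in just a few lines.
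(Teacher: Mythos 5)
Your proof is correct and matches the paper's intended argument: the paper derives Corollary \ref{c:sep} directly from Proposition \ref{c0:sep}, and the subsequence extraction you supply is exactly the (omitted) step showing that separability in probability gives measurability of each $Y_t$ with respect to $\bar\sigma(Y_{T_0})_{\P}$.
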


\medskip

The fact stated in Corollary \ref{c:sep} was also proved by \citet{Kabluchko-Stoev16} by different methods.  

\medskip

\begin{remark}
{\rm
	Comparing Theorem \ref{t:cLM}(iii) and Corollary \ref{c:sep}, it seem that the $\sigma$-finiteness of a L\'evy measure and the separability in probability of the corresponding Poissonian  process are close. However, the separability in probability is a stronger condition. Indeed, let $V=\{V_t \}_{t\in [0,1]}$ be a  family of i.i.d. Rademacher random variables, $\P(V_t = \pm 1)=1/2$, and let $\nu$ be the probability distribution of $V$ in $(\R^{[0,1]}, \cB^{[0,1]})$. $\nu$ trivially satisfies \eqref{cLM}, so it is a L\'evy measure. Let $Y=\{Y_t \}_{t\in [0,1]}$ be the corresponding  compound Poisson process, see Example \ref{e:CP}. Since $\P(|Y_t-Y_s| > 1) > 1/(2e)$ for any $s\ne t$, $Y$ has finite L\'evy measure but is not separable in probability. 
	} 
\end{remark}

\bigskip

\subsection{Representations and examples of L\'evy measures of processes}\label{s:ex}

A natural way to describe L\'evy measures on path spaces is to view them as ``laws of processes'' defined on possibly infinite measure spaces. Below we  formalize this approach. 

\begin{definition}\label{def:rep}
	Let $\{\nu_I: I \in \hT\}$ be a consistent family of finite dimensional L\'evy measures, which extends uniquely to a L\'evy measure $\nu$ by Theorem \ref{t:LK-P}.  A collection of measurable functions $V=\(V_t \)_{t\in T}$ defined on a measure space $(S, \cS, n)$ is said to be a representation of $\nu$ if  for every $I \in \hT$ 
	\begin{equation} \label{rep}
n(\{s: V_I(s) \in B \}) =  \nu_I(B)\, , \quad \text{for every  } B \in \cB^I_0\, .
\end{equation}
A representation $V$ is called exact if $n \circ V^{-1} = \nu$ or, equivalently, if $n \circ V^{-1}$ satisfies (L\ref{LM0}).  Here $V$ is viewed as a function from $S$ into $\R^T$ given by $V(s)(\cdot)=V_{(\cdot)}$(s). 
\end{definition}

\medskip

Representations of L\'evy measures are useful for stochastic integral and series representations of the corresponding infinitely divisible processes while exact representations give precise forms of L\'evy measures. The difference between these two representations is a technical one, as it is shown below.

\begin{lemma}\label{l:exact}
	Any representation of a L\'evy measure, defined on a $\sigma$-finite measure space, can be modified to an exact representation by restricting it to a smaller domain. 
\end{lemma}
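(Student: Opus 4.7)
The plan is to exploit the $\sigma$-finiteness of $n$ to apply the criterion from Theorem \ref{t:cLM} and then remove a small measurable set where $V$ vanishes on a well-chosen countable subset of coordinates.

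First, I would set $\tilde\mu := n \circ V^{-1}$ on $(\R^T, \cB^T)$. Since $V$ is a representation, we have $\tilde\mu \circ \pi_I^{-1}(B) = n(V_I^{-1}(B)) = \nu_I(B)$ for every $I \in \hT$ and every $B \in \cB^I_0$. By the final assertion of Theorem \ref{t:LK-P}, this forces $\nu = \tilde\mu^0 \le \tilde\mu$, and by Lemma \ref{l:nu*} (equation \eqref{nu_0}) these two measures coincide on $\cB^T_0$. Since $n$ is $\sigma$-finite, so is $\tilde\mu$, and hence $\nu$ is $\sigma$-finite as a sub-measure of a $\sigma$-finite measure.

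Next I would invoke Theorem \ref{t:cLM}(iii) to produce a countable set $T_0 \in \hT_c$ with $\nu\{x \in \R^T: x_{T_0} = 0\} = 0$. Because $T_0$ is countable, the set
\[
  N := \bigcap_{t \in T_0} \{s \in S : V_t(s) = 0\} = V^{-1}(\pi_{T_0}^{-1}(0_{T_0}))
\]
belongs to $\cS$, and I would take $S' := S \setminus N$ with $V' := V|_{S'}$ and $n' := n|_{S'}$.

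To verify exactness, for any $A \in \cB^T$ I would compute
\[
  n' \circ (V')^{-1}(A) = n(V^{-1}(A) \cap S') = \tilde\mu\bigl(A \setminus \pi_{T_0}^{-1}(0_{T_0})\bigr).
\]
Since $A \setminus \pi_{T_0}^{-1}(0_{T_0})$ lies in $\cB^T_0$, the first observation yields $\tilde\mu(A \setminus \pi_{T_0}^{-1}(0_{T_0})) = \nu(A \setminus \pi_{T_0}^{-1}(0_{T_0}))$, and the defining property of $T_0$ gives $\nu(A \cap \pi_{T_0}^{-1}(0_{T_0})) = 0$, so this equals $\nu(A)$. Hence $n' \circ (V')^{-1} = \nu$, proving exactness.

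There isn't really a hard obstacle here; the work is entirely in assembling the right three ingredients (the inequality $\nu \le \tilde\mu$ from Theorem \ref{t:LK-P}, the $\sigma$-finiteness criterion from Theorem \ref{t:cLM}, and the agreement of $\tilde\mu$ and $\nu$ on $\cB^T_0$). The one subtle point to be wary of is measurability: the "natural" choice $\{s : V(s) = 0_T\}$ need not lie in $\cS$ when $T$ is uncountable, which is exactly why the $\sigma$-finiteness hypothesis is needed—it supplies a countable $T_0$ making the truncation set $N$ measurable while simultaneously $\nu$-null.
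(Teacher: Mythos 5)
Your overall architecture is sound, and your final verification---that restricting $V$ to $\{s: V_{T_0}(s)\ne 0\}$ pushes $n$ forward exactly onto $\nu$ once a countable $T_0$ with $\nu\{x: x_{T_0}=0\}=0$ is in hand---is correct and coincides with the paper's endgame. The genuine gap is the assertion that ``since $n$ is $\sigma$-finite, so is $\tilde\mu := n\circ V^{-1}$.'' Pushforwards do not preserve $\sigma$-finiteness: if $S=\bigcup_k S_k$ with $n(S_k)<\infty$, the images $V(S_k)$ need not lie in $\cB^T$, and there may be no countable cover of $\R^T$ by sets of finite $\tilde\mu$-measure. A counterexample within the hypotheses of the lemma: take $T=\{1\}$, let $S=\R_{+}$ with Lebesgue measure $n$, and set $V_1(s)=1$ for $s\le 1$ and $V_1(s)=0$ for $s>1$. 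Then $V$ represents the L\'evy measure $\delta_1$ (a Poisson random variable), but $\tilde\mu$ assigns mass $+\infty$ to every Borel set containing $0$, so it is not $\sigma$-finite even though $n$ is. Consequently your appeal to Theorem \ref{t:cLM}(iii) to produce $T_0$ has no support---and producing such a $T_0$ is essentially the entire content of the lemma, since everything after that point is bookkeeping.

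The paper closes exactly this gap with an extremal argument that never requires $\tilde\mu$ to be $\sigma$-finite. It replaces $n$ by an equivalent \emph{finite} measure $n_1(ds)=f(s)\,n(ds)$ (this is where the $\sigma$-finiteness of $n$ is actually consumed), sets $\alpha=\inf_{J\in\hT} n_1\{s: V_J(s)=0\}$, and notes that the infimum is attained at a countable $T_0$ (the union of a minimizing sequence). Minimality forces $n_1\{V_{T_0}=0,\ V_{T_1}\ne 0\}=0$, hence $n\{V_{T_0}=0,\ V_{T_1}\ne 0\}=0$, for every countable $T_1\supset T_0$; this one fact simultaneously shows that the restriction of $V$ to $\{V_{T_0}\ne 0\}$ is still a representation and that its pushforward satisfies \eqref{cLM}, whence it equals $\nu$ by the uniqueness in Theorem \ref{t:LK-P}, and the $\sigma$-finiteness of $\nu$ comes out \emph{a posteriori} via Theorem \ref{t:cLM}. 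Your proof can be repaired by substituting this infimum argument for the false $\sigma$-finiteness claim (or by decomposing $\tilde\mu$ into a countable sum of finite pieces and running the same extremal argument on each), but some version of it is unavoidable.
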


\medskip



\begin{remark}
{\rm
%
 Any L\'evy measure $\nu$ has an exact representation. Indeed, the evaluation process $V_t(x)=x(t)$, $x \in \R^T$, $t\in T$ is an exact  representation of $\nu$ on $(S, \cS, n)=(\R^T, \cB^T, \nu)$. However, such representation does not give much of  information about the L\'evy measure because it is too general. Therefore, we are seeking more specific representations on richer structures, such as standard Borel spaces (Borel subsets of Polish spaces, see \cite[Ch. 1]{Kallenberg(01)}). 
}
\end{remark}

\bigskip

\begin{example}[L\'evy processes]\label{e:Lp}
{\rm 
Let $Y=(Y_t)_{t\ge 0}$ be a Poissonian L\'evy process determined by $\E e^{i u Y_t} = e^{t K(u)}$, where $K$ is the cumulant function given by
$$
K(u) = \int_{\R} (e^{iux} -1 - iu \lt x\rt) \, \rho(dx) + i u c.
$$
For every $I = \{t_1,\dots, t_n\}$, with $0 \le t_1 < \dots < t_n$, and $a=(a_1,\dots,a_n) \in \R^I(\equiv \R^n)$ we have
\begin{equation}\label{cum} 
  \E \exp i \sum_{k = 1}^n a_k Y_{t_k} = \exp \left\{ \sum_{k= 1}^n K(u_k) \Delta t_k   \right\},
\end{equation}
where $\Delta t_k = t_k - t_{k-1}$, $u_k= \sum_{j=k}^{n} a_j$, and $t_0=0$. 
Therefore, the L\'evy measure $\nu_I$ of $X_I$ is given by
\begin{equation} \label{Lm-f}
   \nu_I(B) = \sum_{k=1}^n \int_{\R} \1_{B}(v x_k) \, \rho(dv) \Delta t_k, \quad B \in \mathcal{B}^n
\end{equation}
where $x_k \in \R^n$, $x_k=(\underbrace{0,\dots,0}_{ \ k-1 \, \text{times}}, 1,\dots, 1)$, $k=1,\dots,n$. Define $V=\(V_t \)_{t\in T}$ on  the half-plane  $\R_{+}\times \R$  equipped with a measure $\lambda\otimes \rho$ given by
\begin{equation} \label{rep-Lp}
  V_t(r, v) = \1_{\{t\ge r\}} v, \quad (r,v) \in \R_{+} \times \R,
\end{equation}
where $\lambda$ denotes the Lebesgue measure. We first verify that $V$ is a representation of the L\'evy measure $\nu$ of $Y$. Let $I$ be a finite set of indices as above. For any $B \in \cB^I_0$ we have
\begin{align*} 
  (\lambda\otimes \rho)\{ V_{I} \in B \} &= \int_{\R }\int_{0}^{\infty} \1_{B}(V_I(r,v)) \, dr \rho(dv)\\
  & = \int_{\R } \sum_{k = 1}^n \int_{t_{k-1}}^{t_k} \1_{B}(\1_{\{t_1\ge r\}} v, \dots, \1_{\{t\ge r\}} v) \, dr \rho(dv)\\
  & = \int_{\R } \sum_{k = 1}^n  \1_{B}(\underbrace{0,\dots,0}_{ \ k-1 \, \text{times}}, v,\dots, v) \, \rho(dv) \Delta t_k = \nu_I(B)\, ,
\end{align*}
as in \eqref{Lm-f}. To check that $V$ is exact it is enough to verify \eqref{cLM}. Indeed, for $T_0=\N$ we have
$$
(\lambda\otimes \rho)\{(r,v): \ \1_{\{n\ge r\}} v =0 \ \forall\, n \in \N  \} = 0.
$$
Thus $V$ in \eqref{rep-Lp} is an exact representation of $\nu$.
}
\end{example}

\bigskip

\begin{example}[Squared Bessel processes]\label{e:Bessel}
{\rm 
	Let $Y= \(Y_t \)_{t\ge  0}$ denote a squared Bessel process of dimension $\beta > 0$ starting from 0. If $\beta \in \N$, then $Y_t := \|B_t\|^2$, where $B$ is a $\beta$-dimensional standard Brownian motion.  In general, $Y$ is defined as the unique solution of the stochastic differential equation 
	   $$
   dY_t = 2 \sqrt{Y_t} \, dW_t + \beta\, dt, \quad Y_0=0,
   $$
where $W$ is a one dimensional standard Brownian motion. \citet{Shiga-Watanabe(73)} showed that squared Bessel processes are infinitely divisible and \citet{Pitman-Yor82} described their L\'evy measures on $C(\R_{+})$. We will adapt that characterization to our setting. 

Let 
$$
U_{+} := \{u \in C(\R_{+}): u(0)=0, \ u_{|(0,t_0)}> 0, \ u_{|[t_0, \infty)}=  0 \ \text{for some $t_0>0$} \}. 
$$
$U_{+}$ is a Borel subset of $C(\R_{+})$, on which we consider the It{\^o} measure $n_{+}$ of the Brownian positive excursions, see \citet[Chapter XII]{Revuz-Yor99}. Let $L^a_{\infty}(u)$ denote the total accumulated local time of an excursion $u \in U_{+}$ at $a>0$. Symbolically,
$$
L^a_{\infty}(u) = \int_{0}^{\infty} \delta_a(u(t)) \, dt.
$$
Set $L^a_{\infty}(u) =0$ when $a \le 0$. 
Define $V=\(V_t \)_{t\ge  0}$ on a Borel space $\R_{+}\times U_{+}$  with a measure $\beta \lambda \otimes n_{+}$ by
\begin{equation} \label{rep-Bes}
  V_t(r,u) =  L^{t-r}_{\infty}(u), \quad r \ge 0, \ u \in U_{+}. 
\end{equation}
To check that $V$  is an exact representation of the L\'evy measure $\nu$ of $Y$ we invoke an equation given after Theorem 3.2 in \citet{Mansuy-Yor08} which states that
\begin{equation} \label{MN}
  \int_{U_{+}} \nu(du) F(u) = \beta \int_{U_{+}}  M(du) \int_{0}^{\infty} dr \, F(u((\cdot - r)^{+}))
\end{equation}
for any measurable functional $F: U_{+} \mapsto \R_{+}$, where $M= n_{+} \circ {L^{(\cdot)}_{\infty}}^{-1}$. This equation says that $(\beta \lambda \otimes n_{+}) \circ V^{-1} = \nu$.
	}
\end{example}
\bigskip

\begin{example}[Feller diffusion]\label{e:Feller}
{\rm
We consider a Feller diffusion $Z= \(Z_t \)_{t\ge  0}$ without the drift term, which satisfies the stochastic differential equation
\begin{equation} \label{}
  dZ_t = \sigma \sqrt{Z_t} \, dW_t, \quad Z_0=a>0,
\end{equation}
where $W$ is a one dimensional standard Brownian motion. By change of time, $Y_t= Z_{4\sigma^{-2}t}$ we have
\begin{equation} \label{}
  dY_t = 2\sqrt{Y_t} \, dW'_t, \quad Y_0=a>0,
\end{equation}
where $W'$ is another standard Brownian motion.
Therefore, $Y$ is a 0-dimensional squared Bessel process whose L\'evy measure $\nu_0$ given in the above cited \cite[Theorem 3.2]{Mansuy-Yor08}. Namely, in the notation of  Example \ref{e:Bessel}, $\nu_0 = a \, n_{+} \circ {L^{(\cdot)}_{\infty}}^{-1}$. Therefore, $V=\(V_t \)_{t\ge  0}$ given by
\begin{equation} \label{rep-Fel}
  V_t(u) =  L^{4^{-1}\sigma^2 t}_{\infty}(u), \quad u \in U_{+} 
\end{equation}
is a representation of the L\'evy measure of $Z= \(Z_t \)_{t\ge  0}$ on $(U_{+}, \cB(U_{+}), a \, n_{+})$. 
}
\end{example}

 The next example provides a simple illustration for the method of Lemma \ref{l:exact}. 

\begin{example}[General compound Poisson processes]\label{e:CP}
{\rm  
Let $V=\{V_t\}_{t\in T}$ be a stochastic process and let  $\zeta$  a Poisson random variable with mean $\theta$.  
Let $\{V^{(n)}\}_{n \in \N}$ be a sequence of independent copies of $V$ and  independent of $\zeta$. Then
\begin{equation}
   Y_t= \sum_{n=1}^{\zeta} V^{(n)}_t, \quad  t \in T
\end{equation}
is a Poissonian infinitely divisible process such that  for every $I \in \hat T$,  $a \in \R^I$, 
\begin{equation}\label{ex:CP} 
  \E \exp i \sum_{t \in I} a_t Y_{t} = \exp \left\{ \theta \E (e^{i \sum_{t \in I} a_t V_{t} } - 1)   \right\} = \exp \left\{ \int_{\R^I}(e^{i \langle a, y  \rangle } - 1)\, \nu_I(dy)   \right\}\, .
  \end{equation}
Thus $V=\{V_t\}_{t\in T}$ is a representation the L\'evy measure $\nu$ of $Y$ on $(\Omega, \mathcal{F}, \theta \mathbb{P})$. By the proof of Lemma \ref{l:exact}, the restriction $V_0$ of $V$ to $\Omega_0:= \{\omega: V_{T_0}(\omega) \ne 0 \}$ is an exact representation of $\nu$, where $T_0 \in \hT_c$ is such that 
\begin{equation} \label{rep-CP}
  \P( V_{T_0} = 0) = \inf_{J \in \hT} \, \P\{\omega: V_J(\omega) = 0 \}.
\end{equation}
}
\end{example}

\bigskip

\section{L\'evy-It\^o representations and transfer of regularity for L\'evy measures}\label{s:LI}

\medskip

The following proposition is a direct extension of Proposition \ref{p:can-spe-rep} with a similar proof. Thus its proof will be omitted.

\begin{proposition}\label{p:LI-0}
	Let $X=\(X_t \)_{t \in T}$ be an infinitely divisible process with the generating triplet $(\Sigma, \nu, b)$. Suppose $V=\(V_t \)_{t\in T}$ is a representation of $\nu$ on  $(S, \cS, n)$. Let $G=\(G_t \)_{t \in T}$ be a centered Gaussian process with covariance $\Sigma$ and let $N$ be a Poisson random measure on $(S, \cS)$ with intensity $n$ such that  $G$ and $N$ are independent. Then the process $X' = (X'_{t})_{t\in T}$ given by
	\begin{equation} \label{LI-0}
  X'_t := G_t+\int_{S} V_t(s) \, \big(N(ds) - \chi(V_t(s))\, n(ds)\big) + b(t), \quad t \in T, 
\end{equation}
is a version of the process $X$. 
\end{proposition}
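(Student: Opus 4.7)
The plan is to compute the characteristic function of the finite-dimensional distributions of $X'$ and match it to the L\'evy-Khintchine formula \eqref{LK} for $X$; by uniqueness of characteristic functions, this shows $X' \eid X$. Fix $I=\{t_1,\dots,t_n\} \in \hat T$ and $a \in \R^I$. First I would check that each integral $\int_S V_t(s)[N(ds)-\chi(V_t(s))n(ds)]$ is well-defined in the sense of \eqref{int}--\eqref{PoisInt}: since $V$ represents $\nu$ and the map $v \mapsto |v|^2\wedge 1$ vanishes at the origin, the image measure identity $n\circ V_t^{-1}=\nu_{\{t\}}$ on $\cB^{\{t\}}_0$ gives
\begin{equation}
\int_S |V_t(s)|^2\wedge 1 \, n(ds) = \int_\R |v|^2\wedge 1 \, \nu_{\{t\}}(dv) < \infty
\end{equation}
by (L\ref{LM}). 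Then, by independence of $G$ and $N$, the characteristic function factors into a Gaussian contribution $\exp(-\tfrac12\langle a,\Sigma_I a\rangle)$, a drift contribution $\exp(i\langle a,b_I\rangle)$, and a Poissonian contribution that must be handled carefully.

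The subtlety, which is where I expect the main technical care is needed, is that the compensator in the definition of $X'_t$ uses the componentwise truncation $V_t\chi(V_t)$, not a truncation of the linear combination $\langle a,V_I\rangle$. To bridge this, I would rewrite
\begin{equation}
\sum_{k=1}^n a_k \int_S V_{t_k}[N-\chi(V_{t_k})n] = \tilde I_N(\langle a,V_I\rangle) + \int_S \bigl(\langle a,V_I(s)\rangle\chi(\langle a,V_I(s)\rangle) - \langle a,\lt V_I(s)\rt\rangle\bigr)\, n(ds),
\end{equation}
where $\tilde I_N(g) := \int g[dN-\chi(g)dn]$ is the single-variable compensated integral with characteristic function given by \eqref{int-cf}. (One checks integrability of the correction term from $|v\chi(v)-\lt v\rt(v)| = O(|v|^2\wedge 1)$.) Applying \eqref{int-cf} and collecting the deterministic correction with the exponent yields
\begin{equation}
\E\exp\Bigl(i\sum_k a_k \int_S V_{t_k}[N-\chi(V_{t_k})n]\Bigr) = \exp\Bigl(\int_S [e^{i\langle a,V_I(s)\rangle} - 1 - i\langle a,\lt V_I(s)\rt\rangle] \, n(ds)\Bigr).
\end{equation}

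Finally, I would transfer the integral from $(S,n)$ to $(\R^I,\nu_I)$ by the change-of-variables $s \mapsto V_I(s)$. The integrand $e^{i\langle a,x\rangle}-1-i\langle a,\lt x\rt\rangle$ vanishes at $x=0_I$, so the fact that the representation identity $n\circ V_I^{-1}=\nu_I$ holds only on $\cB^I_0$ (rather than all of $\cB^I$) is harmless. Combining the three factors reproduces exactly \eqref{LK} for $X_I$, which finishes the proof. The only real obstacle is the reconciliation of the componentwise versus linear-combination truncations in the middle step; aside from this, the argument is a direct generalization of Proposition \ref{p:can-spe-rep}, where the representation was the evaluation process $V_t(x)=x(t)$ on $(\R^T,\cB^T,\nu)$ itself.
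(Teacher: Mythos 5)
Your proposal is correct and is essentially the proof the paper intends: the paper omits the argument, stating it is ``a direct extension of Proposition \ref{p:can-spe-rep} with a similar proof,'' and that proof is precisely your middle step reconciling the componentwise compensator $\langle a,\lt V_I\rt\rangle$ with the truncation $\chi(\langle a,V_I\rangle)$ of the linear combination before applying \eqref{int-cf}. Your added ingredients --- the change of variables $n\circ V_I^{-1}=\nu_I$ on $\cB^I_0$ (harmless since the integrand vanishes at $0_I$) and the factorization of the Gaussian and drift parts --- are exactly the routine extensions the paper leaves to the reader.
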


\medskip

The next theorem shows that, under some regularity assumptions, spectral representations  hold almost surely. 
Recall Definition \ref{sep-P} (separability in probability).
\medskip

\begin{theorem}[Generalized L\'evy-It\^o representation]\label{t:LI}
Let $X=\(X_t \)_{t\in T}$ be a separable in probability infinitely divisible process with a separant $T_0$ and the generating triplet $(\Sigma, \nu, b)$. 
Assume that the probability space is rich enough to support independent of $X$ standard uniform random variable. Then, given a representation $V=\(V_t \)_{t\in T}$ of $\nu$ defined on a $\sigma$-finite measure space $(S, \cS, n)$, where $\cS$  is   countably generated (modulo $n$),  there exist a centered Gaussian process $G=\(G_t \)_{t \in T}$ with covariance $\Sigma$, an independent of $G$  Poisson random measure $N$  on $(S, \cS)$ with intensity measure $n$, such that for every $t \in T$ 
\begin{equation} \label{LI}
X_t = G_t + \int_{S} V_t(s) \, \big(N(ds) - \chi(V_t(s))\, n(ds)\big) + b(t) \quad a.s.
\end{equation}
\end{theorem}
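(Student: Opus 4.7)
The plan is to combine Proposition~\ref{p:LI-0} with a transfer-of-randomness argument so as to realize $G$ and $N$ on the original probability space. On an auxiliary probability space, Proposition~\ref{p:LI-0} produces independent $G'$ (centered Gaussian with covariance $\Sigma$) and $N'$ (Poisson on $(S,\cS)$ with intensity $n$) such that the process
$$
X'_t := G'_t + \int_{S} V_t(s)\bigl(N'(ds) - \chi(V_t(s))\,n(ds)\bigr) + b(t), \qquad t\in T,
$$
has the same law as $X$ on $(\R^T,\cB^T)$. The target joint distribution is $(X',G',N')$, which I would like to recover on the original space: by Kallenberg's transfer theorem, provided $(G',N')$ takes values in a standard Borel space and the original space supports a standard uniform random variable $U$ independent of $X$, there exist $(G,N)$ measurable with respect to $\sigma(X,U)$ and satisfying $(X,G,N)\eid (X',G',N')$.

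The next task is to realize $(G',N')$ as a random element of a standard Borel space. For $N'$, the assumption that $\cS$ is countably generated modulo $n$ lets me pick a countable $\pi$-system $\cS_0$ generating $\cS$ up to $n$-null sets; then $N'$ is encoded by the countable collection $(N'(A))_{A\in\cS_0}$ of $\N$-valued random variables, realizing $N'$ as a random element of a Polish space of $\sigma$-finite counting measures. For $G'$, separability in probability of $X$ together with independence of the Gaussian and Poissonian components and uniqueness of the L\'evy-Khintchine decomposition under $L^0$-convergence transfer to separability in probability of $G'$; consequently $G'$ is determined modulo $L^0$-equivalence by its countable trace $G'_{T_0}\in\R^{T_0}$, which lies in a Polish space. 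These two reductions make $(G',N')$ a Borel random element, and the transfer step applies.

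It remains to verify \eqref{LI} for every $t\in T$. Fix $t$. The integral $I_{N'}(V_t)$ is well-defined as a measurable function of $N'$ because $\int_S(|V_t|^2\wedge 1)\,dn=\int_{\R}(|x|^2\wedge 1)\,\nu_{\{t\}}(dx)<\infty$ by the representation property and~(L\ref{LM}), via the approximation procedure leading to \eqref{PoisInt}. Hence $R'_t:=X'_t - G'_t - I_{N'}(V_t) - b(t)$ is a Borel function of $(X',G',N')$, and $\P(R'_t=0)=1$ by Proposition~\ref{p:LI-0}. After the transfer, $R_t:=X_t-G_t-I_N(V_t)-b(t)$ is the \emph{same} Borel function of $(X,G,N)$, so $\P(R_t=0)=\P(R'_t=0)=1$, which is the required identity.

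The main obstacle I anticipate is the standard Borel realization of $G'$ when $T$ is uncountable: the product $\sigma$-algebra $\cB^T$ is \emph{not} countably generated in that case, so one cannot treat $G'$ directly as a random element of $(\R^T,\cB^T)$. The separability-in-probability hypothesis is precisely what circumvents this, by letting the countable trace $G'_{T_0}$ encode the whole Gaussian process modulo $L^0$-equivalence; an analogous role is played by the countable generation of $\cS$ modulo $n$ for the Poisson part. Without these two conditions, the transfer step would fail and a more delicate projective-limit construction would be required.
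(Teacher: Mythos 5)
Your proposal is correct and follows essentially the same route as the paper: realize the representation on an auxiliary space via Proposition~\ref{p:LI-0}, reduce $(G',N')$ to the countable cores $G'_{T_0}$ and $(N'(A))_{A\in\cS_0}$, apply Kallenberg's transfer theorem against $X$, and use separability in probability to handle $t\notin T_0$. The only (cosmetic) difference is that the paper transfers the core against $X_{T_0}$, extends $N$ from the algebra $\cS_0$ to $\cS$, builds $G_t$ for $t\notin T_0$ as limits in probability, and re-invokes Proposition~\ref{p:LI-0} to identify $X$ with the resulting version, whereas you verify \eqref{LI} coordinatewise via the ``same Borel functional'' argument.
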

 
\bigskip

We illustrate this representation on four examples of infinitely divisible processes.

\begin{corollary}\label{e:LI} { \ }
\np

{\bf (a)}  \underline{\rm L\'evy processes.}
Let $Y=(Y_t)_{t\ge 0}$ be a Poissonian L\'evy process as in Example \ref{e:Lp},  with $\chi= \1{\{|v| \le 1 \}}$. Formula \eqref{LI} applied to $V$ in \eqref{rep-Lp} yields the usual L\'evy-It\^o representation: with probability 1 for all $t\ge 0$,
\begin{align} \label{}
  Y_t & = \int_{\R_{+}}\int_{\R}  \1_{\{t \ge r \}} v \(N(dr, dv) - \chi(\1_{\{t \ge r \}} v)  \, dr \rho(dv) \) + c t \\
  &= \int_{0}^t \int_{|v|\le 1} v   \(N(dr, dv) -  dr \rho(dv)\) +  \int_{0}^t \int_{|v| > 1} v   N(dr, dv)  + c t,
\end{align}
where $N$ is s Poisson random measure on $\R_{+} \times \R$ with intensity $\lambda \otimes \rho$.
\medskip

{\bf (b)} \underline{\rm  Squared Bessel processes.} Let $Y= \(Y_t \)_{t\ge  0}$ be a squared Bessel process of dimension $\beta > 0$ starting from 0, as in Example \ref{e:Bessel}. Formula \eqref{LI} applied to $V$ in \eqref{rep-Bes} yields: with probability 1 for all $t\ge 0$,
\begin{align} \label{}
  Y_t & = \int_{0}^t \int_{U_{+}} L^{t-r}_{\infty}(u) \, N(dr, du) \, ,
\end{align}
where $N$ is s Poisson random measure on $\R_{+} \times U_{+}$ with intensity $\beta \lambda \otimes n_{+}$. Therefore, a squared Bessel process $Y$ is a mixed stochastic convolution. 
\medskip

{\bf (c)} \underline{\rm  Feller diffusion.} Let $Z= \(Z_t \)_{t\ge  0}$ be a Feller diffusion starting from $a>0$, as in Example \ref{e:Feller}. Formula \eqref{LI} applied to $V$ in \eqref{rep-Fel} yields: with probability 1 for all $t\ge 0$,
\begin{align} \label{}
  Z_t  = \int_{U_{+}} L^{\kappa t}_{\infty}(u) \, N(du) \, ,
\end{align}
where $\kappa = \sigma^2/4$ and  $N$ is s Poisson random measure on $U_{+}$ with intensity $a \, n_{+}$.  
\medskip

\underline{\bf (d)}(Compound Poisson process). Let $Y=(Y_t)_{t\in T}$ be a compound Poisson process generated by a stochastic process $V=(V_t)_{t\in T}$ and $\theta>0$, as in Example \ref{e:CP}. Suppose that $V$ is separable in probability. Then \eqref{LI} applied to $V$ yields: with probability 1 for all $t \in T$
\begin{align} \label{}
  Y_t  = \int_{\Omega} V_t(\omega) \, N(d\omega)  \, ,
 \end{align}
	where $N$ is s Poisson random measure on $\Omega$ with intensity $\theta \P$.
\end{corollary}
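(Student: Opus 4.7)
The approach is to apply the Generalized L\'evy--It\^o representation, Theorem \ref{t:LI}, to the four specific representations $V$ of the respective L\'evy measures constructed in Examples \ref{e:Lp}, \ref{e:Bessel}, \ref{e:Feller}, and \ref{e:CP}. In all four cases the process is Poissonian infinitely divisible, so the Gaussian component in \eqref{LI} is identically zero, and the work reduces to (i)~checking the hypotheses of Theorem \ref{t:LI} and (ii)~simplifying the compensator-plus-shift term into the explicit form displayed in the corollary.

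For (i), all four processes are stochastically continuous on a separable index set (take $T_0 = \Q_{+}$ in (a)--(c); separability in probability of $V$ is assumed in (d)), hence separable in probability. The representing measure spaces $(\R_{+}\times \R, \lambda \otimes \rho)$, $(\R_{+}\times U_{+}, \beta\lambda\otimes n_{+})$, $(U_{+}, a\, n_{+})$, and $(\Omega, \cF, \theta \P)$ are all standard Borel or probability spaces (using that $U_{+}$ is a Borel subset of $C(\R_{+})$), with $\sigma$-finite and countably generated $\sigma$-algebras. Therefore Theorem \ref{t:LI} applies and produces a Poisson random measure $N$ on the representing space such that \eqref{LI} holds almost surely for every~$t$.

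For (ii), in case (a) the substitution $V_t(r,v) = \1_{\{t\ge r\}} v$ with the standard cutoff $\chi = \1_{\{|v|\le 1\}}$ in \eqref{LI} gives the first displayed line directly; splitting the Poisson integral into $\{|v|\le 1\}$ and $\{|v|>1\}$ and using that the compensator vanishes on the latter produces the classical L\'evy--It\^o decomposition, with the L\'evy--Khintchine drift $c$ in \eqref{cum} playing the role of $b(t)/t$. In cases (b), (c), and (d) the processes are non-negative with L\'evy measures of finite first moment: for (b) the Mansuy--Yor formula \eqref{MN} together with the normalization $\int_{U_{+}} L^{a}_{\infty}(u) \, n_{+}(du) = 1$ yields $\int V_t \, d(\beta\lambda\otimes n_{+}) = \beta t = \E Y_t$; case (c) follows by the time change of Example \ref{e:Feller} and the martingale property $\E Z_t = a$; and in (d) the total mass $\theta\P(\Omega) = \theta$ is finite. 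In each of (b)--(d) one may therefore choose the cutoff so that $\int V_t \chi(V_t) \, dn = b(t)$, whence the compensator and shift in \eqref{LI} cancel and leave the uncompensated Poisson integrals displayed in the corollary.

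The main obstacle is the cancellation in (ii) for (b)--(d), i.e.\ matching the L\'evy--Khintchine drift $b(t)$ in the canonical triplet with $\int V_t \chi(V_t) \, dn$ for an appropriate choice of $\chi$. This is where knowledge of the specific process enters: Pitman--Yor's description of the L\'evy measure together with \eqref{MN} supplies it for squared Bessel processes, a trivial time change transfers the statement to Feller diffusions, and finiteness of $\nu$ combined with \eqref{ex:CP} handles compound Poisson processes.
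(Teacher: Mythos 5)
Your proposal is correct and follows exactly the route the paper intends: the corollary is stated without proof as a direct application of Theorem \ref{t:LI} to the representations constructed in Examples \ref{e:Lp}--\ref{e:CP}, with the compensator and shift cancelling in the drift-free nonnegative cases. One small imprecision: in (b)--(d) the cancellation is not a matter of \emph{choosing} the cutoff --- for any admissible $\chi$ the shift in the generating triplet automatically equals $\int_S V_t\chi(V_t)\,dn$ because these processes have zero drift in the finite-variation sense, which is exactly the identification you verify via the first-moment computations.
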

\bigskip

Next we consider the transfer of regularity for L\'evy measures. In short, this property  says that path regularities of infinitely divisible processes are inherited by representations of their L\'evy measures. 
A precise statement follows.
\medskip

\begin{theorem}[Transfer of regularity]\label{transfer}
Let $X=\(X_t \)_{t\in T}$ be an infinitely divisible process with a $\sigma$-finite L\'evy measure $\nu$. Assume that paths of $X$ lie in a set $U$ that is a standard Borel space for the $\sigma$-algebra $\mathcal{U}= \mathcal{B}^T \cap U$ and also that $U$ is an algebraic subgroup of $\R^T$ under addition. Then $\nu$ is concentrated on $U$ in the sense that $\nu_{\ast}(\R^T \setminus U)=0$. 
Therefore, $\nu$ is well defined on $\mathcal{U}$ and the evaluation process on $(U, \mathcal{U}, \nu)$ is an exact representation of $\nu$ and has paths in $U$.

Moreover, any representation $V=\(V_t \)_{t\in T}$ of $\nu$, given on a $\sigma$-finite measure space $(S, \cS, n)$, can be modified to an exact representation $\widetilde{V}=\(\widetilde{V}_t \)_{t\in T}$ of $\nu$ with  paths in $U$ and such that  $n\{s: \widetilde{V}_t(s)\ne V_t(s) \}=0$ for every  $t \in T$. 
\end{theorem}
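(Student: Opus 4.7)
The plan is to prove concentration $\nu_{\ast}(\R^T \setminus U) = 0$ by first upgrading it to the stronger statement $\nu(\R^T \setminus U) = 0$ via measurability of $U$, and then deriving the other assertions by straightforward bookkeeping. First I would observe that $U$ itself lies in $\cB^T$: since $(U, \mathcal{U})$ is a standard Borel space and $\mathcal{U}$ is the trace of $\cB^T$ on $U$, the inclusion $U \hookrightarrow \R^T$ is a Borel injection between standard Borel spaces, so $U \in \cB^T$ by Kuratowski's theorem; in particular $\R^T \setminus U \in \cB^T$.

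The main step is to establish $\nu(\R^T \setminus U) = 0$ by an atom extraction argument. Suppose for contradiction that this fails; then by $\sigma$-finiteness of $\nu$ we can pick a measurable $E \subset \R^T \setminus U$ with $0 < \nu(E) < \infty$. Applying Proposition \ref{p:LI-0} with the tautological evaluation representation of $\nu$ on $(\R^T, \cB^T, \nu)$, there exist a centered Gaussian process $G$ with covariance $\Sigma$ and an independent Poisson random measure $N$ on $(\R^T, \cB^T)$ with intensity $\nu$ such that
$$X'_t = G_t + \int_{\R^T} x(t)\,\bigl(N(dx) - \chi(x(t))\,\nu(dx)\bigr) + b(t)$$
is a version of $X$, and in particular $X' \in U$ almost surely. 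Splitting $N = N_E + N_{E^c}$ into independent Poisson random measures on $E$ and $E^c$, I would write $X' = X'_0 + Y_E$ where
$$Y_E(t) = \int_E x(t)\,N_E(dx) - c(t), \quad c(t) = \int_E \chi(x(t))\,x(t)\,\nu(dx),$$
with $c$ deterministic and finite (since $\sup_{v}|v\chi(v)| < \infty$ and $\nu(E) < \infty$), and $X'_0$ collecting the Gaussian part, the drift, and the integral over $E^c$; in particular $X'_0$ is independent of $N_E$. On the positive-probability event $\{N(E) = 0\}$, $Y_E \equiv -c$, so $X' = X'_0 - c \in U$ almost surely on this event; independence of $X'_0$ from $N_E$ promotes this to $X'_0 - c \in U$ almost surely unconditionally. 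Next, on $\{N(E) = 1\}$, also of positive probability, $N_E = \delta_{x^{\ast}}$ with $x^{\ast} \sim \nu|_E/\nu(E)$ independent of $X'_0$, so $X' = (X'_0 - c) + x^{\ast}$; combining $X' \in U$, $X'_0 - c \in U$, and closure of $U$ under subtraction forces $x^{\ast} \in U$ almost surely on this event, contradicting $x^{\ast} \in E \subset \R^T \setminus U$ almost surely. Hence $\nu(\R^T \setminus U) = 0$, which a fortiori gives $\nu_{\ast}(\R^T \setminus U) = 0$.

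Once $\nu(\R^T \setminus U) = 0$ is in hand, the remaining assertions reduce to measure-theoretic bookkeeping. Restricting $\nu$ to $\mathcal{U}$ is unambiguous because any two sets $A, A' \in \cB^T$ with the same trace on $U$ satisfy $A \triangle A' \subset \R^T \setminus U$, hence $\nu(A) = \nu(A')$; the evaluation process $V_t(u) = u(t)$ on $(U, \mathcal{U}, \nu)$ is then immediately a representation whose pushforward under the inclusion $U \hookrightarrow \R^T$ recovers $\nu$ on all of $\cB^T$, hence exact. For an arbitrary representation $V$ on $(S, \cS, n)$, set $\phi(s) = (V_t(s))_{t \in T}$ and $S_0 = \phi^{-1}(U) \in \cS$. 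Since $\R^T \setminus U \in \cB^T_0$ (because $0 \in U$), we have $n(S \setminus S_0) = (n \circ \phi^{-1})(\R^T \setminus U) = \nu(\R^T \setminus U) = 0$; defining $\widetilde V(s) = V(s)$ for $s \in S_0$ and $\widetilde V(s) = 0$ otherwise produces the required modification, as $\widetilde V$ has paths in $U$, coincides with $V$ up to an $n$-null set for each $t$, and its pushforward agrees with $\nu$ on $\cB^T_0$ while satisfying (L\ref{LM0}), giving exactness.

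The main obstacle is the atom extraction step, where the independence structure (of $X'_0$ from $N_E$, and, on $\{N(E)=1\}$, of the location $x^{\ast}$ from the number of atoms) must be deployed carefully to push the almost-sure containment $X' \in U$ down to $x^{\ast} \in U$ almost surely; the subgroup property of $U$ is essential here, and $\sigma$-finiteness of $\nu$ is precisely what permits choosing a manageable set $E$ of finite L\'evy measure disjoint from $U$.
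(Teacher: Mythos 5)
There is a genuine gap at your very first step, and it undermines the precise content of the theorem. The claim that $U\in\cB^T$ is false in general: Kuratowski's (Lusin--Souslin) theorem requires both measurable spaces to be standard Borel, but $(\R^T,\cB^T)$ is \emph{not} standard Borel when $T$ is uncountable --- every set in $\cB^T$ has the form $\pi_{T_0}^{-1}(B)$ for some countable $T_0\subset T$, so $\cB^T$ is not countably generated and cannot contain sets such as $C[0,1]$ or $D[0,1]$ inside $\R^{[0,1]}$ (modify a continuous function at a single point outside $T_0$ to see this). These are exactly the spaces $U$ the theorem is designed for, which is why the conclusion is stated with the inner measure, $\nu_{\ast}(\R^T\setminus U)=0$; the stronger statement $\nu(\R^T\setminus U)=0$ is in general not even defined. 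The error propagates through the rest of the argument: the assertion that the version $X'$ from Proposition \ref{p:LI-0} satisfies ``$X'\in U$ almost surely'' is neither well posed (the event need not be measurable) nor implied by $X'\eid X$, since versions do not inherit path properties; likewise, in the final paragraph, $S_0=\phi^{-1}(U)$ need not belong to $\cS$ and ``$\R^T\setminus U\in\cB_0^T$'' again presupposes measurability of $U$. (A secondary issue: even granting all this, setting $\widetilde V=0$ off $S_0$ does not yield an \emph{exact} representation, because $n\{V_{T_0}=0\}$ is not controlled by $\nu$ on $\cB_0^T$; one still needs the excision step of Lemma \ref{l:exact}.)

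Your underlying atom-extraction scheme is sound and is in fact essentially the paper's argument, but it must be run entirely in the weak notion of concentration formalized by Lemma \ref{l:reg}: a process $W$ is concentrated on $U$ if $\P(W\in A)=0$ for every $A\in\cB^T$ with $A\subset\R^T\setminus U$, equivalently $W$ admits a modification with all paths in $U$. This \emph{is} inherited by versions, so $X'$ satisfies it. On $\{N(E)=0\}$ independence then gives that $X_0'-c$ is concentrated on $U$ in this sense; on $\{N(E)=1\}$ you take modifications of $X'$ and of $X_0'-c$ with paths in $U$, subtract them using the subgroup hypothesis, and conclude via Lemma \ref{l:reg} that $\P(x^{\ast}\in A)=0$ for every measurable $A$ disjoint from $U$ --- contradicting $\P(x^{\ast}\in E)=1$. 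Since by $\sigma$-finiteness every measurable subset of $\R^T\setminus U$ of positive $\nu$-measure contains one of finite positive measure, this yields $\nu_{\ast}(\R^T\setminus U)=0$ directly, with no need for $U$ to be measurable. This repaired argument coincides in substance with the paper's proof, which splits off a compound Poisson component with finite L\'evy measure equivalent to $\nu$ and conditions on the Poisson count being $0$ or $1$.
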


\medskip

Let $T=[0,1]$, for concreteness. Examples of $U$ satisfying the above theorem include such ``obvious'' spaces as $C[0,1]$ and $D[0,1]$ (the latter one under the Skorohod topology),  but they  also include non-separable Banach spaces such as the space of Lipschitz-continuous functions $C^{0,1}[0,1]$ and the space of c{\`a}dl{\`a}g functions of finite variation  $BV_1[0,1]$ (which are Borel subsets of $C[0,1]$ and $D[0,1]$, respectively).

\bigskip

As an application, consider a c\`{a}d\`{a}g Poissonian infinitely divisible process $Y=\(Y_t \)_{t\in [0,1]}$. Since $Y$ is right continuous, it is separable in probability, so its L\'evy measure $\nu$ is $\sigma$-finite. By Theorem \ref{transfer},  $\nu$ is concentrated on $D[0, 1]$. Using \citet[Lemma 3.5]{Basse-Rosinski13} we get $\nu\{\|x\|_{\infty} >r \}< \infty$ for any $r>0$. Therefore, the L\'evy-Khintchine representation of Theorem \ref{LK-P} can be refined to
\begin{align} \label{LK-D[0,1]}
    \E & \exp i \sum_{j=1}^n a_j Y_{t_j} \\
 &=  \exp \left\{\int_{D[0,1]} \big(e^{i\sum_{j=1}^n a_j x(t_j) } - 1 - i \sum_{j=1}^n a_j x(t_j) \1_{[0,1]}(\|x\|_{\infty})\big) ~\nu(dx) + i \sum_{j=1}^n a_j b(t_j)  \right\}\, ,
\end{align}
where $b \in D[0,1]$ and $\|\cdot\|_{\infty}$ is the supremum norm.

%
%

\bigskip

\section{Isomorphism identities and spectral representations}\label{s:iso}

We begin with some identities for expectations of functionals of stochastic processes. We present them under different degrees of generality of the assumptions, which is more  suitable for applications. 
For any two $\sigma$-finite measures $\mu$ and $\nu$, we write $\mu \ll \nu$ when $\mu$ is absolutely continuous with respect to $\nu$, and $\mu \sim \nu$ when these measures are equivalent. We will also write $\E[\xi; \, \eta]$ for $\E[\xi\eta]$ and $\E[\xi; \, A] :=\E[\xi \1_A]$, and use the convention $0 \cdot \infty = 0$.



\begin{theorem}\label{t:iso0}
Let  $X=\(X_t \)_{t\in T}$ be an infinitely divisible process having a $\sigma$-finite L\'evy measure $\nu$. Let $Z=\(Z_t \)_{t\in T}$ be a process independent of $X$  such that  $\cL(Z) \ll \nu$. Then \ 
 $\cL(X+Z) \ll \cL(X)$. Hence, there exists a measurable functional $g: \R^T \mapsto \R_{+}$ such that for any measurable functional $F: \R^T \mapsto \R$
\begin{equation} \label{iso0}
  \E F\left(\(X_t+Z_t\)_{t \in T} \right) = \E \left[F\(\(X_t\)_{t \in T}\); \, g(X)\right] \, .
\end{equation}
\end{theorem}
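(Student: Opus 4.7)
The plan is to couple $Z$ with a single added atom of the Poisson random measure underlying $X$ and then invoke the Mecke (Campbell--Palm) identity for Poisson processes. Because $\nu$ is $\sigma$-finite, the evaluation process $V_t(x)=x(t)$ is an exact representation of $\nu$ on the $\sigma$-finite space $(S,\cS,n)=(\R^T,\cB^T,\nu)$; by Proposition \ref{p:LI-0} I obtain a version
$$X = G + I_N + b,$$
where $G$ is the centered Gaussian process with covariance $\Sigma$, $N$ is an independent Poisson random measure on $(\R^T,\cB^T)$ with intensity $\nu$, and $I_N$ is the compensated Poisson integral of $V$ defined in \eqref{int}.

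Next, set $h := d\cL(Z)/d\nu$ and, after enlarging the probability space, introduce a random element $\sigma$ of $\R^T$ independent of $(G,N)$ with distribution $h\cdot\nu$. Then $\sigma \eid Z$ and is independent of $X$, so I may take $Z=\sigma$. Because the compensator $\chi(V_t(x))\,\nu(dx)$ does not depend on $N$, inserting one extra atom shifts $I_N$ by the integrand evaluated at that atom:
$$I_{N+\delta_\sigma}\bigl(x\mapsto x(t)\bigr) = I_N\bigl(x\mapsto x(t)\bigr) + \sigma(t), \qquad t\in T,$$
and consequently $X+Z \eid G + I_{N+\delta_\sigma} + b$. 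I then apply Mecke's identity for $N$: for any non-negative measurable functional $\Phi$ of a point $x\in\R^T$ and a point measure $M$,
$$\int_{\R^T}\!\E\,\Phi(x,N+\delta_x)\,\nu(dx) = \E\!\int_{\R^T}\!\Phi(x,N)\,N(dx).$$
Taking $\Phi(x,M)=h(x)F(G+I_M+b)$ for bounded measurable $F:\R^T\to\R_+$ and using the independence of $G$ and $N$, the left side equals $\E F(G+I_{N+\delta_\sigma}+b)=\E F(X+Z)$, while the right side equals $\E[F(X)\cdot N(h)]$, where $N(h):=\int h\,dN\ge 0$ and $\E N(h)=\int h\,d\nu=1$.

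Taking $F=\1_A$ gives $\cL(X+Z)\ll\cL(X)$, and defining $g(x):=\E[N(h)\mid X=x]$ (well defined up to $\cL(X)$-null sets by the Doob--Dynkin lemma) yields \eqref{iso0} for bounded non-negative $F$; the general case and the ``if one side exists then so does the other'' reading follow by standard monotone-class and sign-splitting arguments, using the convention $0\cdot\infty=0$. The principal technical hurdle is justifying Mecke's identity on the possibly non-standard space $(\R^T,\cB^T,\nu)$; $\sigma$-finiteness however lets one partition $\R^T$ into countably many sets of finite $\nu$-mass, decompose $N$ as an independent sum of finite-intensity Poisson random measures on the pieces, and reduce to the elementary finite case where Mecke's formula is immediate.
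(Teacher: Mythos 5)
Your proposal is correct and is essentially the paper's own argument: the paper reduces Theorem \ref{t:iso0} to Theorem \ref{t:iso1}(a), which is Theorem \ref{t:iso2} applied to the canonical spectral representation (the evaluation process on $(\R^T,\cB^T,\nu)$), and the proof of Theorem \ref{t:iso2} is exactly the Mecke--Palm computation you carry out, with $N(q)$ in place of your $N(h)$. The only cosmetic differences are that the paper takes $g=d\cL(X+Z)/d\cL(X)$ rather than $\E[N(h)\mid X=\cdot\,]$ (the two coincide a.s.) and verifies the Mecke--Palm identity on exponential functionals before extending, rather than on general bounded nonnegative $F$.
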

\bigskip

Theorem \ref{t:iso0} is a consequence of Theorem \ref{t:iso1}, which itself is deduced from Theorem \ref{t:iso2}.

\begin{remark}
{\rm 
	(a) From \eqref{iso0} it follows that the linear spaces of functionals $F(X+Z)$ of the process $X+Z$ and $F(X)$ of $X$ are isometric under various norms, such as the $L^p$-norms, but with respect to possibly different probability measures.  This may explain the name  ``isomorphism theorems'' or ``isomorphism identities'' for results of this kind of formulas.  \newline
	(b) The processes $Z$ can be viewed as a random translation (or perturbation) of $X$, so that  Theorem \ref{t:iso0}  gives a sufficient condition when such translation (perturbation) is  ``admissible''.
	}
\end{remark}

The function $g$ in \eqref{iso0} has a closed form only in certain cases. Therefore, we will give below another, easier to handle term in place of $g(X)$. We can also impose a slightly weaker condition on the process $Z$ in part (b), than the one in the previous theorem.

\begin{theorem}\label{t:iso1}
Let  $X=\(X_t \)_{t\in T}$ be an infinitely divisible process of the form $X = G+Y$, where $G=\(G_t \)_{t\in T}$ is a centered Gaussian process independent of a Poissonian process	$Y=\(Y_t \)_{t\in T}$ having a $\sigma$-finite L\'evy measure $\nu$ and given by its canonical spectral representation
\begin{equation} \label{can1}
  Y_t = \int_{\R^T} x(t) [N(dx) - \chi(x(t)) \nu(dx)] + b(t), \quad t \in T.
\end{equation}
Here $N$ is a Poisson random measure with intensity $\nu$ (see Proposition \ref{p:can-spe-rep}). Let $Z=\(Z_t \)_{t\in T}$ be an arbitrary process independent of $N$.

{\bf (a)}
Suppose that $\cL(Z) \ll \nu$ and let $q :=\frac{d\cL(Z)}{d\nu}$ be the Radon-Nikodym derivative of $\cL(Z)$ with respect to $\nu$. Then for any measurable functional $F: \R^T \mapsto \R$ 
\begin{equation} \label{iso1}
  \E F\left(\(X_t+Z_t\)_{t \in T} \right) = \E \left[F\(\(X_t\)_{t \in T}\); \, N(q)\right]
\end{equation} 
where
\begin{equation} \label{Z1}
  N(q) = \int_{\R^T} q(x) \, N(dx)\, .
\end{equation}
Conversely, for any $F$ as above, 
\begin{align} \label{iso1'}
  \E  \left[F\(\(X_t\)_{t \in T}\);  \, N(q) >0\right] =  \E \left[F\left(\(X_t+Z_t\)_{t \in T} \right)\(N(q) + q(Z)\)^{-1}   \right]\, ,
\end{align}
where $q(Z)=q\((Z_{t})_{t\in T}\)$. Moreover, if $\nu\{x: q(x) > 0 \}= \infty$, then $\cL(Y+Z)$ and $\cL(Y)$ are equivalent.

{\bf (b)} 
Suppose that  $\cL(Z) \ll \nu + \delta_{0_T}$ and let $q :=\frac{d\cL(Z)}{d(\nu+ \delta_{0_T})}$. Then for any measurable functional $F: \R^T \mapsto \R$ 
\begin{equation} \label{iso1b}
  \E F\left(\(X_t+Z_t\)_{t \in T} \right) = \E \left[F\(\(X_t\)_{t \in T}\); \, N(q) + q(0_T)\right]\, .
\end{equation} 
Conversely, for any $F$ as above, 
\begin{align} \label{iso1'b}
  \E & \left[F\(\(X_t\)_{t \in T}\);  \, N(q) + q(0_T) >0\right] \\
  & \hspace{0.85in} =  \E \left[F\left(\(X_t+Z_t\)_{t \in T} \right)\(N(q) + q(Z) + q(0_T)\1_{U^c}(Z)\)^{-1}   \right]
\end{align}
where $q(Z)=q\((Z_{t})_{t\in T}\)$ and $U \in \cB^T$ is such that  $0_T \in U \in \cB^T$ and $\nu(U)=0$. Furthermore, $\cL(Y+Z)$ and $\cL(Y)$ are equivalent if $q(0_T) >0$ or  $\nu\{x: q(x) > 0 \}= \infty$.
\end{theorem}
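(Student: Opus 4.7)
The plan is to recognize that $Y+Z$ is the value of the \emph{same} canonical Poisson integral \eqref{can1} but driven by the enlarged point measure $N':=N+\delta_Z$ in place of $N$, and then to invoke the Mecke (Slivnyak--Mecke) formula for Poisson random measures. The key pathwise observation is that, for every $t\in T$,
\[
  \int_{\R^T} x(t)\,[N'(dx)-\chi(x(t))\,\nu(dx)] + b(t) \;=\; Y_t + Z_t,
\]
because adding a single atom at $Z$ adds $Z(t)$ to the uncompensated part while leaving the $\nu$-compensator unchanged. Writing $\Phi$ for this measure-to-path functional, we have $X=G+\Phi(N)$ and $X+Z=G+\Phi(N+\delta_Z)$, so both parts reduce to Mecke identities after conditioning on the independent Gaussian component $G$.

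For part (a), apply Mecke's formula
\[
  \E\!\int h(x,N)\,N(dx) \;=\; \int \E[h(x,N+\delta_x)]\,\nu(dx)
\]
to $h(x,M)=F(g+\Phi(M))\,q(x)$ with $g\in\R^T$ fixed. Since $\mathcal{L}(Z)=q\cdot\nu$, the right-hand side is $\E[F(g+\Phi(N+\delta_Z))]$, while the left-hand side is $\E[F(g+\Phi(N))\,N(q)]$; averaging over $g\sim\mathcal{L}(G)$ (independent of $N,Z$) gives \eqref{iso1}. For the converse \eqref{iso1'}, apply Mecke instead with $h(x,M)=F(g+\Phi(M))\,q(x)/M(q)$ (convention $0/0=0$). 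The left-hand side telescopes to $\E[F(g+\Phi(N))\,N(q)/N(q)]=\E[F(g+\Phi(N))\,\1_{\{N(q)>0\}}]$, while the right-hand side becomes $\E[F(g+\Phi(N+\delta_Z))/(N(q)+q(Z))]$ since $(N+\delta_x)(q)=N(q)+q(x)$ and the reweighting $q(x)\,\nu(dx)$ is exactly $\mathcal{L}(Z)$. Averaging over $g$ yields \eqref{iso1'}; the denominator is a.s.\ positive because $q(Z)>0$ $\mathcal{L}(Z)$-a.s. Finally, if $\nu\{q>0\}=\infty$, then $N$ places infinitely many atoms in $\{q>0\}$, so $N(q)>0$ a.s.\ and $\mathcal{L}(Y+Z)\sim\mathcal{L}(Y)$.

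For part (b), set $p:=q(0_T)$ and $\alpha:=1-p=\int q\,d\nu$. When $\alpha>0$, decompose $Z\eid \varepsilon\cdot 0_T+(1-\varepsilon)Z_1$ with $\varepsilon\sim\mathrm{Bernoulli}(p)$ independent of $Z_1$, which has density $q/\alpha$ relative to $\nu$. Since adding the zero path leaves $X$ unchanged, $\E[F(X+Z)]=p\,\E[F(X)]+\alpha\,\E[F(X+Z_1)]$, and \eqref{iso1} applied to $Z_1$ converts the second term to $\E[F(X)\,N(q)]$, yielding \eqref{iso1b}. For \eqref{iso1'b} rerun the Mecke argument with the modified weight $q(x)/[M(q)+p]$, producing
$\alpha\,\E[F(X+Z_1)/(N(q)+q(Z_1)+p)] = \E[F(X)\,N(q)/(N(q)+p)]$;
adding the atomic piece $p\,\E[F(X)/(N(q)+p)]$ gives $\E[F(X)\,(N(q)+p)/(N(q)+p)]=\E[F(X);\,N(q)+p>0]$. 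The measurable surrogate $U\ni 0_T$ with $\nu(U)=0$ enters precisely because $\1_{U^c}(0_T)=0$ while $\1_{U^c}(Z_1)=1$ a.s., which is the $\cB^T$-measurable encoding of the atom/continuous dichotomy in $\mathcal{L}(Z)$. The equivalence claim follows since $N(q)+p>0$ a.s.\ whenever $p>0$ or $\nu\{q>0\}=\infty$.

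The main technical hurdles are (i) verifying the pathwise identity $\Phi(N+\delta_Z)=Y+Z$ almost surely, which rests on convergence of the compensated Poisson integral and on the invariance of the $\nu$-compensator under addition of one atom; (ii) justifying Mecke's formula on the abstract path space $(\R^T,\cB^T)$ where $\nu$ is only $\sigma$-finite, which is exactly what condition \eqref{cLM} (i.e., Theorem \ref{t:cLM}) buys us for both the existence of $N$ and the validity of the identity; and (iii) in part (b), dealing with the fact that $\{0_T\}$ itself need not be $\cB^T$-measurable when $T$ is uncountable, so the ``atom'' must be detected through a $\cB^T$-measurable neighborhood $U$, supplied by Theorem \ref{t:cLM}.
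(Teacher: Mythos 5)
Your proposal is correct, and it runs on the same engine as the paper --- the Mecke--Palm formula --- but the execution differs in three places worth recording. First, the paper does not apply Mecke directly to Theorem \ref{t:iso1}: it routes everything through the general representation result (Theorem \ref{t:iso2}), proved for exponential functionals $F(x)=e^{i\sum a_j x(t_j)}$ and then extended by equality of the two induced measures on $\cB^T$; your direct application of Mecke to an arbitrary measurable $F(g+\Phi(M))$ is equivalent but leans harder on your hurdle (i), namely that the compensated integral admits a \emph{measurable} version $\Phi$ of the point measure satisfying $\Phi(M+\delta_x)=\Phi(M)+x$ a.s.\ for $\nu$-a.e.\ $x$ --- the compensated integral is only an $L^0$-limit, and the paper's reduction to exponential $F$ is precisely how it discharges this point, so you should either adopt that reduction or construct the additive measurable version explicitly. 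Second, for the converse identities the paper extends the index set by an extra coordinate $\theta$ carrying $\bar X_\theta=N(q)$ and applies the \emph{direct} identity to $H(x)=F(x_T)x_\theta^{-1}\1(x_\theta>0)$, whereas you apply Mecke with the ratio weight $q(x)/M(q)$; both yield \eqref{iso1'}, and yours is the more economical route. Third, and most substantively, for part (b) the paper realizes the atom at $0_T$ by Poissonization, setting $N_r=N+\eta_r\delta_{0_T}$ with $\eta_r\sim\mathrm{Poisson}(r)$, applying Theorem \ref{t:iso2} with $q_r=q\1_{U^c}+r^{-1}q(0_T)\1_U$, and letting $r\to\infty$ using $r^{-1}\eta_r\cip 1$; your Bernoulli mixture decomposition $\cL(Z)=q(0_T)\delta_{0_T}+\big(\int q\,d\nu\big)\cL(Z_1)$ reaches \eqref{iso1b} and \eqref{iso1'b} without any limiting argument and is arguably cleaner (it does rely, as does the paper, on the mutual singularity of $\nu$ and $\delta_{0_T}$ supplied by $\sigma$-finiteness and Theorem \ref{t:cLM}). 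Note also that the logical order is reversed: the paper proves (b) first and deduces (a) by showing $q(0_T)=0$ when $\cL(Z)\ll\nu$, while you build (b) on top of (a); both directions are sound.
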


The previous two theorems will be proved as special cases of the next result.

\begin{theorem}\label{t:iso2}
Let  $X=\(X_t \)_{t\in T}$ be an infinitely divisible process given by
\begin{equation} \label{gen-id}
   X_t = G_t + \int_{S} V_t(s) \Big[N(ds) - \chi(V_t(s))  n(ds)\Big] + b(t)\, ,
\end{equation}
where $V=\(V_t \)_{t\in T}$ is a representation of the L\'evy measure of $X$ defined on a $\sigma$-finite measure space $(S, \cS, n)$,  $N$ is a Poisson random measure on $(S, \cS)$ with intensity $n$, $G=\(G_t \)_{t\in T}$ is a centered Gaussian process independent of $N$, and $b$ is a shift function.  Choose an arbitrary measurable function $q: S \mapsto \R_+$ such that  $\int_{S} q(s) \, n(ds)=1$.
 Then  for any measurable functional $F: \R^T \mapsto \R$
\begin{equation}\label{iso2} 
 \int_{S}\E  F\left(\(X_t+ V_t(s)\)_{t\in T} \right) \, q(s)\, n(ds) = 
 \E [F\left( \(X_t\)_{t\in T}\right); \, N(q)]\, , 
\end{equation}
where
\begin{equation} \label{Z2}
  N(q) = \int_{S} q(s) \, N(ds)\, .
\end{equation}

Conversely, for any $F$ as above,
\begin{align} 
   \E  [F\big( & \(X_t\)_{t\in T}  \big)\,  ;  N(q)>0 ] \label{iso3}  \\
   & = 
 \int_{S} \E \big[ F\left(\(X_t+ V_t(s)\)_{t\in T} \right); \, (N(q)+ q(s))^{-1}\big] \,q(s)\, n(ds)\, .
\end{align}
If $n\{s\in S: q(s)>0 \}= \infty$ then
\begin{equation}\label{iso4} 
 \E [ F\left( \(X_t\)_{t\in T}\right) ] = 
 \int_{S} \E \Big[F\big((X_t+ V_t(s)\big)_{t\in T} \big); \,  (N(q)+ q(s))^{-1} \Big] \,q(s)\, n(ds)\, .
\end{equation}  
\end{theorem}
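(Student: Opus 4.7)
The plan is to deduce all three displays from the Mecke (Palm) formula for a Poisson random measure $N$ on the $\sigma$-finite space $(S,\cS,n)$, exploiting a clean ``additive'' behavior of the representation \eqref{gen-id} under adding a single atom to $N$. The Gaussian part $G$, being independent of $N$, rides along passively throughout and can be conditioned out whenever convenient.

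The key observation is that if one replaces $N$ by $N+\delta_s$ in \eqref{gen-id}, the compensator $\chi(V_t(u))\,n(du)$ is a deterministic measure and does not change, so $X_t$ simply becomes $X_t+V_t(s)$ pathwise. Using this together with Mecke's formula (applied first to bounded nonnegative $F$ and then extended to general measurable $F$ by linearity, monotone convergence and the convention $0\cdot\infty=0$ stated in the paper), I would write
\begin{equation}
\E[F(X);\,N(q)] \;=\; \E\int_S F(X)\,q(s)\,N(ds) \;=\; \int_S \E[F(X+V(s))]\,q(s)\,n(ds),
\end{equation}
which is exactly \eqref{iso2}. The normalization $\int_S q\,dn=1$ guarantees $\E N(q)=1$, so $N(q)<\infty$ a.s., which controls integrability throughout the Palm computation.

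For the converse \eqref{iso3}, on the event $\{N(q)>0\}$ I would insert the identity $\int_S \frac{q(s)}{N(q)}\,N(ds)=1$, giving
\begin{equation}
\E[F(X);\,N(q)>0] \;=\; \E\int_S F(X)\,\frac{q(s)}{N(q)}\,N(ds),
\end{equation}
and apply Mecke's formula to the right-hand side; since atom-addition at $s$ sends $N(q)$ to $N(q)+q(s)$, this produces exactly \eqref{iso3}. Identity \eqref{iso4} is then immediate: if $n\{q>0\}=\infty$, the Poisson count $N\{s:q(s)>0\}$ has infinite mean, hence is a.s.\ infinite, so $N(q)>0$ almost surely and the indicator in \eqref{iso3} drops out.

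The main obstacle I expect is bookkeeping rather than a hard idea: making the Mecke computation rigorous for a possibly signed, unbounded $F$ in the presence of the $\chi$-truncation in \eqref{gen-id}. The cleanest route is to first prove everything for bounded nonnegative $F$, observe that both sides of \eqref{iso2} are measures in $F$ (with the finiteness provided by $\int q\,dn=1$), and then invoke the paper's convention that \eqref{iso2}--\eqref{iso4} are to be read as ``if one side exists then so does the other and they coincide'' to pass to general measurable $F$. One should also check joint measurability of $(s,\omega)\mapsto F\bigl(X(N+\delta_s,G)\bigr)$, which follows from a monotone-class argument on $F$ together with independence of $G$ and $N$.
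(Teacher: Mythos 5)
Your proposal is correct and rests on the same engine as the paper's proof: the Mecke--Palm formula combined with the observation that adding an atom $\delta_s$ to $N$ shifts the representation \eqref{gen-id} by $V(s)$, with the independent Gaussian part factored out. The only (cosmetic) differences are that the paper proves \eqref{iso2} by reducing to complex exponentials $F$ (for which the atom-addition shift becomes the multiplicative identity $H(N+\delta_s)=F(V(s))H(N)$, and both sides are then identified as probability measures on $\R^T$), and obtains \eqref{iso3} by applying \eqref{iso2} to the index-augmented process $\bar X$ with $\bar X_\theta=N(q)$ and the functional $F(x_T)\,x_\theta^{-1}\1(x_\theta>0)$ rather than by your second direct Mecke application with weight $q(s)/N(q)$ --- unwinding that step recovers exactly your computation.
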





\bigskip

Now we will discuss how Dynkin's isomorphism fits into the pattern of Theorems \ref{t:iso0} and \ref{t:iso1}.

\begin{example}[Dynkin isomorphism for permanental processes]\label{e:Dynkin} { \ }
{\rm 

\noindent
A positive real-valued stochastic process $Y= (Y_x)_{x\in E}$ over a set $E$ is called a $\alpha$-permanental process with kernel $\(u(x,y): x,y \in E \)$ if for every $x_1,\dots, x_n \in E$ and $s_1,\dots, s_n \ge 0$
\begin{equation} \label{per-pr}
  \E \exp\big\{-\sum_{j=1}^{n} s_j Y_{x_j} \big\} = |I + US|^{-\alpha}
\end{equation}
where $U= \(u(x_i,x_j): 1 \le i,j \le n\)$ and $S=\rm{diag}(s_1,\dots,s_n)$ are $n\times n$-matrices, and $\alpha>0$. \\ Hence, $Y_x$'s are gamma distributed with shape parameter $\alpha$ and mean $\alpha u(x,x)$ and jointly they have a multivariate multivariate gamma distribution, as defined by \eqref{per-pr}. A prototype of a permanental process is a squared Gaussian processes, where $u(x,y)$ is the Gaussian covariance multiplied by 2 and $\alpha=1/2$.

For a fixed kernel $\(u(x,y): x,y \in E \)$, let $Y^{(\alpha)}= (Y_x^{(\alpha)})_{x\in E}$ denote the corresponding $\alpha$-permanental process, it it exists. It is easy to see from \eqref{per-pr} that if $Y^{(\alpha)}$ exists and is infinitely divisible for some $\alpha=\alpha_0>0$, then it does exist for every $\alpha$. Conversely,  the existence of $Y^{(\alpha)}$  for every $\alpha>0$ implies that all $Y^{(\alpha)}$ are Poissonian infinitely divisible.

The importance of permanental processes comes also from their connection to Markov processes, as established by \citet{Eisenbaum-Kaspi09}.  They showed in \cite[Theorem 3.1]{Eisenbaum-Kaspi09} that if $X=(X_{t})_{t\ge 0}$ is a transient Markov process with a state space $E$ and 0-potential density $\(u(x,y): x,y \in E \)$ with respect to some reference measure, then  for every $\alpha>0$ there exists a $\alpha$-permanental process~{$Y^{(\alpha)}=(Y_x^{(\alpha)})_{x\in E}$} with kernel $u(x,y)$.
Since $Y^{(\alpha)}= (Y_x^{(\alpha)})_{x\in E}$ must be infinitely divisible and one-dimensional marginals are nonnegative without drift, from Theorem \ref{LK-P} there is a L\'evy measure $\nu$ on $(\R^E, \cB^E)$ such that   
  \begin{equation} \label{per-LK}
  \E \exp\big\{-\sum_{j=1}^{n} s_j Y_{x_j}^{(\alpha)} \big\} = \exp\left[ \int_{\R^{E}_{+}} \left( e^{-\sum_{j=1}^{n} s_j y(x_j)} -1 \right) \, \alpha \nu(dy)\right],
\end{equation}
for all  $x_1,\dots, x_n \in E$, $s_1,\dots, s_n \ge 0$, and $n \ge 1$.  $\nu$ is the L\'evy measure of the \ 1-permanental process. Under some weak assumptions on $Y^{(\alpha)}$, such as its separability in probability, $\nu$ is also $\sigma$-finite, see Corollary \ref{c:sep}. 
 The canonical spectral representation of $Y^{(\alpha)}$ is of the form
\begin{equation} \label{can-perm}
 Y_x^{(\alpha)} = \int_{\R_{+}^E} y(x) \, N^{(\alpha)}(dy), \quad  x\in E
\end{equation}
where $N^{(\alpha)}$ is a Poisson random measure with intensity $\alpha \nu$. 

To formulate the Dynkin Isomorphism Theorem we need more ingredients. Recall,  a transient Markov process  $X=(X_{t})_{t\ge 0}$ specified above. Assume that $X$ admits the local time  $\(L^x_t: x\in E, t \ge 0\)$, which is normalized  to satisfy $\E_x(L^y_{\infty})= u(x,y)$. Fix $a\in E$ with $u(a,a)>0$, and let $\tilde{\P}_a$ be the probability under which the process $X$ starts at $a$ and is killed at its last visit to $a$. Then, for any measurable functional $F: \R^E \mapsto \R$,
\begin{equation} \label{iso-per1}
  \E\tilde{\E}_a \left[ F\((Y^{(\alpha)}_{x} + L^{x}_{\infty})_{x\in E} \) \right] =  \E \left[ F\((Y^{(\alpha)}_{x})_{x\in E} \); \, \frac{Y^{(\alpha)}_a}{\alpha u(a,a)}  \right]\, .
\end{equation}
This identity is a version of the Dynkin Isomorphism Theorem due to  \citet[Theorem 3.2]{Eisenbaum-Kaspi09}. 
Here we assume that the processes $Y^{(\alpha)}=(Y^{(\alpha)}_x)_{x\in E}$ and $L_{\infty}=(L^x_{\infty})_{x\in E}$ depend on different coordinates of the product probability space under the product measure $\P\otimes \tilde{\P}_a$, so that $Y^{(\alpha)}$ and $L_{\infty}$ are independent. 

 To show that \eqref{iso-per1} fits the framework of Theorem \ref{t:iso0}, we will check that $\cL(L_{\infty}) \ll \alpha\nu$. Indeed, by \cite[Lemma 2.6.2]{Marcus-Rosen(book)} and direct computations as in \cite{Eisenbaum-Kaspi09}, we have for every $x_1=a, x_2,\dots, x_n \in E$ and $s_1,\dots, s_n \ge 0$, 
\begin{equation} \label{LT-lt}
  \tilde{\E}_a \exp\big\{-\sum_{j=1}^{n} s_j L^{x_j}_{\infty} \big\} = \frac{1}{u(a,a)}\, \frac{\partial}{\partial s_1} \log |I + U S|\, .
\end{equation}
Combining \eqref{per-pr}, \eqref{per-LK}, and \eqref{LT-lt} we get
\begin{align*} 
  \tilde{\E}_a \exp\big\{-\sum_{j=1}^{n} s_j L^{x_j}_{\infty} \big\} &= -\frac{1}{\alpha u(a,a)}\, \frac{\partial}{\partial s_1} \log |I + U S|^{-\alpha} \\
  &= -\frac{1}{\alpha u(a,a)}\, \frac{\partial}{\partial s_1}\left[ \int_{\R^{E}} \left( e^{-\sum_{j=1}^{n} s_j y(x_j)} -1 \right) \, \alpha \nu(dy)\right] \\
  &= \frac{1}{u(a,a)}\, \int_{\R_{+}^{E}} e^{-\sum_{j=1}^{n} s_j y(x_j)} \, y(a) \, \nu(dy)\, ,
\end{align*}
which implies $\cL(L_{\infty}) \ll \alpha\nu$. 
Now we will deduce \eqref{iso-per1} from Theorem \ref{t:iso1}(a). By the above,   
\begin{equation} \label{}
  q(y) := \frac{d\cL(L_{\infty})}{d(\alpha \nu)}(y) = \frac{y(a)}{\alpha u(a,a)}\, ,  \quad  y \in \R_{+}^E
\end{equation}
 and from  \eqref{can-perm}, 
$$
N^{(\alpha)}(q) = \int_{\R_{+}^E} \, \frac{y(a)}{\alpha u(a,a)} \, N^{(\alpha)}(dy) =\frac{Y^{(\alpha)}_a}{\alpha u(a,a)}\, .
$$
Therefore, \eqref{iso1} gives \eqref{iso-per1}.
Moreover, since $Y^{(\alpha)}_a>0$,  by \eqref{iso1'} we also get 
\begin{equation} \label{iso-per2}
  \E \left[ F\left( (Y^{(\alpha)}_{x})_{x\in E} \right) \right] = 
\alpha u(a,a) \, \tilde{\E}_a \E \left[F\left((Y^{(\alpha)}_x+L^{x}_{\infty})_{x\in E} \right)\(Y^{(\alpha)}_a + L^{a}_{\infty}\)^{-1}  \right]\, .
\end{equation}

Finally, notice that, while  $\cL(L_{\infty})$ is absolutely continuous with respect to $\nu$, it is not equivalent to $\nu$. Indeed, the set
$B=\{y \in \R^E_{+}: y(a)=0 \}$ is a  null set for $\cL(L_{\infty})$ under $\tilde{\P}_a$, but it is not a null set for $\nu$, even in the case when $E$ is a two-point set. Indeed, from \citet{V-J67} we know that the L\'evy measure of a two dimensional permanental vector has singular components on the axes, except for the trivial totally correlated case, so that $\nu(B)>0$ in general. 
Nevertheless, path regularities of a permanental process transfer to $\nu$ by the transfer of regularity for L\'evy measures (Theorem \ref{transfer}), and by the absolute continuity, they transfer to $L_{\infty}=(L^{x}_{\infty})_{x\in E}$.

  }
\end{example}
\bigskip

In Example \ref{e:Dynkin} we have started with an infinitely divisible permanental process  and showed that Dynkin's isomorphism \eqref{iso-per1} is a special case of Theorem \ref{t:iso0}.  N. Eisenbaum \cite[Lemma 3.1]{Eisenbaum08} made a surprising observation that any isomorphism of the type \eqref{iso-per1}  implies the infinite divisibility of the process.  We will reproduce this result in more detail and establish the form of L\'evy measure in a general setting. We begin with random vectors.

\begin{lemma}\label{l:s-iso}
Let $Y=(Y_1,\dots, Y_n)$ be a nonnegative random vector  with $\theta_i=\E(Y_i)\in (0,\infty)$. The following are equivalent:
\vskip8pt
\begin{itemize}
  \item[(i)] $Y$ is infinitely divisible;
  \vskip8pt 
  \item[(ii)] For every $k\le n$ there exists a vector of nonnegative random variables $Z^k=(Z_1^{k},\dots, Z_n^{k})$ independent of $Y$ such that for any bounded measurable functional $F: \R^n \mapsto \R$
\begin{equation} \label{E2}
   \E F(Y+Z^k) = \E [F(Y); \, \theta_k^{-1} Y_k].
\end{equation}
\end{itemize}
Moreover, if (ii) holds, then $Y$ has the L\'evy measure $\nu$ on $\R_{+}^n$ of the form
\begin{equation} \label{R1}
  \nu(dy) = \sum_{k=1}^{n} \theta_k\1_{A_k}(y) y_{k}^{-1} \mathcal{L}(Z^k)(dy)\, ,
\end{equation}
where $A_k=\{y\in \R_{+}^n:  y_{1}=\cdots =y_{k-1}=0,  \ y_{k}>0  \}$.  
The drift of $Y$ equals  
\begin{equation} \label{R2}
  c= (\theta_1 \P(Z^1_1=0),\dots, \theta_n \P(Z^n_n=0)).
\end{equation}
Furthermore, the law of each  $Z^k$ is determined the law of $Y$, $k=1,\dots, n$.
\end{lemma}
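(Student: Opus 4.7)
The plan is to prove the two directions separately and to read off \eqref{R1}, \eqref{R2}, and the uniqueness of $\cL(Z^k)$ from the converse implication.

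For $(i)\Rightarrow(ii)$, I would invoke Theorem~\ref{t:iso1}(b). Since $Y\ge 0$ has no Gaussian component and $\E Y_k = \theta_k <\infty$, the uncompensated Poissonian representation gives $Y_k = \int y(k)\,N(dy) + c_k$ with $\theta_k = c_k + \int y_k\,\nu(dy)$. Define
$$
q_k(y) := \theta_k^{-1} y_k\, \1_{\{y\ne 0_n\}} + \theta_k^{-1} c_k\, \1_{\{y=0_n\}},
$$
which is a probability density with respect to $\nu + \delta_{0_n}$. Taking $Z^k$ independent of $Y$ with $\cL(Z^k)=q_k\cdot(\nu+\delta_{0_n})$ gives $N(q_k)+q_k(0_n) = Y_k/\theta_k$, and \eqref{iso1b} specializes to \eqref{E2}.

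For $(ii)\Rightarrow(i)$, set $\phi(s) := \E e^{-\langle s,Y\rangle}$ and $L_k(s) := \E e^{-\langle s,Z^k\rangle}$ on $\R^n_+$. Choosing $F(y)=e^{-\langle s,y\rangle}$ in \eqref{E2} and using independence of $Y$ and $Z^k$ yields
\begin{equation}
  \partial_{s_k}\log\phi(s) = -\theta_k L_k(s), \qquad k=1,\dots,n.
\end{equation}
Since $\phi$ is smooth on $(0,\infty)^n$, mixed partials of $\log\phi$ commute, so $\theta_k\,\partial_j L_k = \theta_j\,\partial_k L_j$, which by injectivity of the multivariate Laplace transform promotes to the measure identity
\begin{equation}
  \theta_k\, y_j\, \cL(Z^k)(dy) = \theta_j\, y_k\, \cL(Z^j)(dy), \qquad 1\le j,k\le n.
\end{equation}
Restricting this identity to $\{y_k=0,\,y_j>0\}$ forces $\P(Z^k_k=0,\,Z^k_j>0)=0$ for every $j\ne k$, hence $\{Z^k_k=0\} = \{Z^k=0_n\}$ almost surely.

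Define $\nu$ via \eqref{R1} on the partition $\bigsqcup_k A_k = \R^n_+\setminus\{0_n\}$ and $c$ via \eqref{R2}. Using the measure-level compatibility on each $A_j$, one checks the identity $\theta_k\,\cL(Z^k)(dy) = c_k\,\delta_{0_n}(dy) + y_k\,\nu(dy)$ on all of $\R^n_+$, whose Laplace transform reads $\theta_k L_k(s) = c_k + \int y_k e^{-\langle s,y\rangle}\,\nu(dy)$. The integrability $\int y_k\,\nu(dy) = \theta_k - c_k < \infty$ for each $k$ makes $\nu$ a bona fide L\'evy measure on $\R^n_+\setminus\{0_n\}$. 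Setting $\psi(s) := -\langle c,s\rangle + \int(e^{-\langle s,y\rangle}-1)\,\nu(dy)$, one has $\psi(0)=\log\phi(0)=0$ and $\partial_{s_k}\psi = -\theta_k L_k = \partial_{s_k}\log\phi$ for every $k$, so $\phi = e^{\psi}$, proving $Y$ is infinitely divisible with L\'evy measure $\nu$ and drift $c$. Uniqueness of $\cL(Z^k)$ is immediate from $L_k = -(\theta_k\phi)^{-1}\partial_{s_k}\phi$. The main obstacle is the step that upgrades the scalar compatibility $\theta_k\partial_j L_k = \theta_j\partial_k L_j$ to the measure identity displayed above and then extracts the structural rigidity $\{Z^k_k=0\}\subseteq\{Z^k=0_n\}$ a.s.; without this rigidity the piecewise definition of $\nu$ in \eqref{R1} would be inconsistent with the prescribed drift \eqref{R2}, and the final L\'evy--Khintchine identification would collapse.
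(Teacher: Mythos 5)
Your proposal is correct, and for the harder implication it takes a genuinely different route from the paper. For (i)$\Rightarrow$(ii) the paper does not invoke Theorem \ref{t:iso1}(b); it simply defines $\mathcal{L}(Z^k)=\theta_k^{-1}\bigl(c_k\delta_{0}+y_k\,\nu(dy)\bigr)$ and verifies \eqref{E2} by differentiating the Laplace exponent -- the same content as your argument, and the remark following Proposition \ref{p:s-iso} confirms that the Theorem \ref{t:iso1} route is legitimate. For (ii)$\Rightarrow$(i) the paper first proves the symmetry identity $\theta_j\,\E[F(Z^j)Z^j_k]=\theta_k\,\E[F(Z^k)Z^k_j]$ by applying \eqref{E2} twice to $\E[F(Y)Y_jY_k]$ with exponential $F$; your derivation of the same identity from commuting mixed partials of $\log\phi$ is equivalent (it is exactly the exponential case) and arguably cleaner, and the upgrade to the measure identity via injectivity of the Laplace transform is common to both. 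Where you genuinely diverge is afterwards: the paper proceeds by induction on $n$, integrating $\partial_{\alpha_n}\log\phi$ and reorganizing the resulting exponent using the rigidity $\{Z^k_j>0\}\subset\{Z^k_k>0\}$ and the identity (e3), whereas you avoid induction entirely by constructing $\nu$ and $c$ outright, verifying the global identity $\theta_k\,\mathcal{L}(Z^k)(dy)=c_k\,\delta_{0_n}(dy)+y_k\,\nu(dy)$ on the partition $\bigsqcup_k A_k$, and then matching gradients of $\log\phi$ and the candidate exponent $\psi$ on the connected open orthant (with continuity at $0$ fixing the constant). Your version is shorter and makes the structure of \eqref{R1}--\eqref{R2} transparent in one step; the paper's induction is more pedestrian but never needs the exactness/antiderivative argument. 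The only points worth being explicit about, which you essentially cover, are that $\int y_j e^{-\langle s,y\rangle}\,\mathcal{L}(Z^k)(dy)$ is finite on the open orthant even though $\E Z^k_j$ need not obviously be finite a priori (so the Laplace-transform uniqueness applies), and that $\int y_k\,\nu(dy)\le\theta_k<\infty$ justifies both the drift form of the exponent and differentiation under the integral sign.
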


\bigskip

Now we characterize processes satisfying the abstract version of Dynkin's Isomorphism.
\begin{proposition}\label{p:s-iso}
	Let $Y=(Y_t)_{t\in T}$ be a nonnegative process with $\theta(t)=\E Y_t<\infty$ for every  $t\in T$.
Suppose that for every $s \in T$ having $\theta(s)>0$, there exists a stochastic process $Z^s=(Z^s_t)_{t\in T}$ independent of $Y$ such that for any measurable functional $F: \R^T \mapsto \R$
	\begin{equation} \label{s-iso}
  \E \big[F((Y_t+Z_t^s)_{t\in T})\big] = \E \big[F((Y)_{t\in T}); \, \theta(s)^{-1} Y_s\big].
\end{equation}
Then the process $Y$ is infinitely divisible. If, in addition, $Y$ is separable in probability with a separant $T_0=(s_k)_{k\ge 1}$, then the L\'evy measure $\nu$ of $Y$ is of the form 
\begin{equation} \label{LM1-s-iso}
  \nu = \sum_{k\ge 1} \1_{A_k}\nu_k\, ,
\end{equation}
where {$A_k=\{y\in \R_{+}^T:  y(s_i)=0 \, \forall\, i < k, \,  y(s_k)>0  \}$} are disjoint and $\nu_k$ are L\'evy measures given by
\begin{equation} \label{LM0-s-iso}
  \nu_k(dy) =  \theta(s_k)\,  y(s_{k})^{-1}\, \mathcal{L}(Z^{s_k})(dy)\, , \quad k=1, 2,\dots .
\end{equation}
The drift of $Y$ is given by $c=\(\theta(t) \P(Z^t_{t}=0): \, t\in T\)$.
 
 Moreover, any nonnegative finite mean infinitely divisible process $Y=(Y_t)_{t\in T}$ satisfies \eqref{s-iso} with 
 $Z^s=(Z^s_t)_{t\in T}$ determined by
 \begin{equation} \label{iso-tr}
   \mathcal{L}(Z^s)(dy) = \frac{c(s)}{\theta(s)} \delta_{0_T}(dy) +  \frac{y(s)}{\theta(s)} \, \nu(dy),
\end{equation}
 where $c$ and $\nu$ are the drift and L\'evy measure of $Y$, respectively, and $\theta(s)=\E Y_s>0$.
\end{proposition}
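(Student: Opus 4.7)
My plan is to deduce the infinite divisibility of $Y$ from its finite-dimensional marginals via Lemma~\ref{l:s-iso}, then use separability in probability to control the path-space L\'evy measure, and finally identify $\cL(Z^s)$ by proving the ``moreover'' (converse) part first and invoking the uniqueness clause of Lemma~\ref{l:s-iso}.

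For any $I=\{t_1,\dots,t_n\}\in\hT$ I would restrict \eqref{s-iso} to functionals depending only on the $I$-coordinates. Each restricted auxiliary process $Z_I^{t_k}$ is independent of $Y_I$ and supplies precisely condition~(ii) of Lemma~\ref{l:s-iso} for the nonnegative vector $Y_I$ (indices with $\theta(t_k)=0$ force $Y_{t_k}=0$ a.s.\ and can be dropped). Hence $Y_I$ is infinitely divisible for every $I\in\hT$, so $Y$ is an infinitely divisible process. By Corollary~\ref{c:sep} its L\'evy measure $\nu$ is $\sigma$-finite, and Theorem~\ref{t:cLM}(iii) combined with the separant $T_0=(s_k)$ gives $\nu\{y: y_{T_0}=0\}=0$, so $\nu$ is concentrated on the disjoint union $\bigcup_{k\ge 1} A_k$.

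Next I would establish the last assertion of the proposition before naming the pieces $\nu|_{A_k}$. For a nonnegative infinitely divisible $Y$ of finite mean, there is no Gaussian component and the first-moment formula $\theta(s)=c(s)+\int y(s)\,\nu(dy)$ shows that the right-hand side of \eqref{iso-tr} is a probability measure; let $\widetilde Z^s$ denote an independent process with this law. Then $\cL(\widetilde Z^s)\ll\nu+\delta_{0_T}$ with density $q(y)=y(s)/\theta(s)$ off $\{0_T\}$ and $q(0_T)=c(s)/\theta(s)$. Because $Y\ge 0$ has finite mean, its canonical spectral representation reduces to $Y_s=c(s)+\int x(s)\,N(dx)$, so $N(q)+q(0_T)=Y_s/\theta(s)$, and Theorem~\ref{t:iso1}(b) yields
\begin{equation*}
\E F(Y+\widetilde Z^s) = \E\bigl[F(Y);\, N(q)+q(0_T)\bigr] = \E\bigl[F(Y);\, Y_s/\theta(s)\bigr],
\end{equation*}
which is \eqref{s-iso}.

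Applied to our process $Y$, both the hypothesized $Z^s$ and the freshly constructed $\widetilde Z^s$ satisfy \eqref{s-iso}. Restricting to finite-dimensional functionals and using the uniqueness statement in Lemma~\ref{l:s-iso}, all finite-dimensional laws of $Z^s$ and $\widetilde Z^s$ agree, hence $\cL(Z^s)=\cL(\widetilde Z^s)$ on $\cB^T$. Substituting \eqref{iso-tr} and noting that on $A_k$ the coordinate $y(s_k)$ is strictly positive (so the $\delta_{0_T}$-piece drops out), I invert to obtain $\nu|_{A_k}(dy)=\theta(s_k)\,y(s_k)^{-1}\,\cL(Z^{s_k})(dy)$, which is \eqref{LM0-s-iso}; summing over $k$ gives \eqref{LM1-s-iso}. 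The drift formula $c(t)=\theta(t)\P(Z^t_t=0)$ then follows by reading off the $\delta_{0_T}$-mass in \eqref{iso-tr}, or equivalently by specializing \eqref{R2} to a one-point index set. The main technical obstacle is keeping the invocation of Theorem~\ref{t:iso1}(b) clean when $T$ is uncountable and $\{0_T\}\notin\cB^T$; this is absorbed by the exceptional set $U\in\cB^T$ with $0_T\in U$ and $\nu(U)=0$ already built into that theorem's statement.
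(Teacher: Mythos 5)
Your argument is correct in substance, but it identifies the L\'evy measure by a genuinely different route than the paper. Both proofs obtain infinite divisibility the same way, by restricting \eqref{s-iso} to finite-dimensional functionals and invoking Lemma \ref{l:s-iso}. After that the paper works directly: it \emph{defines} $\nu$ by \eqref{LM1-s-iso}--\eqref{LM0-s-iso}, expands $\int(1-e^{-\sum_i\alpha_i y(t_i)})\,\nu(dy)$ over the double partition indexed by $k$ (first nonzero separant coordinate) and $j$ (first nonzero coordinate of $I$), and uses the symmetry identity \eqref{e3} from the proof of Lemma \ref{l:s-iso} to resum over $k$; separability enters through the a.s.\ identity $\{Z^{t_j}_{T_0}\ne 0\}=\{Z^{t_j}_{t_j}>0\}$, and the ``moreover'' part is afterwards checked by a direct Laplace-transform computation. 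You instead prove the ``moreover'' part first via Theorem \ref{t:iso1}(b) (computing $N(q)+q(0_T)=Y_s/\theta(s)$), apply it to the now-known-to-be-infinitely-divisible $Y$, and use the uniqueness clause of Lemma \ref{l:s-iso} to conclude that $\cL(Z^{s_k})$ is given by \eqref{iso-tr}; inverting on $A_k$ then yields \eqref{LM0-s-iso}. Your route is arguably cleaner and makes explicit the link to Theorem \ref{t:iso1}(b) that the paper only records in the remark following the proposition, at the cost of importing the machinery of Theorems \ref{t:iso1}--\ref{t:iso2} where the paper's computation is self-contained.

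Two points need tightening. First, $\nu\{y:y_{T_0}=0\}=0$ for the \emph{given} separant $T_0$ does not follow from Theorem \ref{t:cLM}(iii) alone, which only produces \emph{some} countable set $T_0'$ with this property; you need the additional observation that $Y_{s_n}\cip Y_t$ with $s_n\in T_0$ forces $\nu\{x_{T_0}=0,\,|x(t)|>\epsilon\}\le\nu\{|x(s_n)-x(t)|>\epsilon\}\to 0$, whence $\nu\{x_{T_0}=0,\,x_{T_0'}\ne 0\}=0$ and therefore $\nu\{x_{T_0}=0\}=\nu\{x_{T_0}=0,\,x_{T_0'}=0\}=0$. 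This step is load-bearing, since without it the sum $\sum_k\1_{A_k}\nu_k$ could miss part of $\nu$. Second, your proof of the ``moreover'' statement invokes Theorem \ref{t:iso1}(b), which is stated for $\sigma$-finite $\nu$; for a general nonnegative finite-mean infinitely divisible process this hypothesis must either be secured by restricting $F$ to countably many coordinates (where $\sigma$-finiteness is automatic) or replaced, as in the paper, by the direct computation of Lemma \ref{l:s-iso}(i). Neither issue affects the main application, where separability in probability is assumed.
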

\bigskip

\begin{remark}\label{}
{\rm
	Consider a separable in probability process $Y$ of Proposition \ref{p:s-iso} with L\'evy measure $\nu$ given by \eqref{LM1-s-iso}. Let $a \in T$ and $\theta(a)>0$. We can always include $a$ in $T_0$ and assume that $s_1=a$. If $Z^a_a>0$ a.s. then $\mathcal{L}(Z^a) \ll \nu_1 \ll \nu$, so that Theorem \ref{t:iso1}{\bf (a)} gives \eqref{s-iso}. If $\P(Z^a_a=0)>0$, then $\mathcal{L}(Z^a) \not\ll \nu $ but $\mathcal{L}(Z^a) \ll \nu + \delta_{0_T}$, so that Theorem \ref{t:iso1}{\bf (b)} gives \eqref{s-iso} in this case.
	}
\end{remark}

\begin{remark}\label{}
{\rm 
If $Y=(Y_1,Y_2)$ satisfies \eqref{E2}, then L\'evy measure of  $Y$ has the form
\begin{align} \label{2-perm}
  \nu(dy) &= \theta_1 \1_{\{y_1>0, y_2=0 \}} y_1^{-1}\mathcal{L}(Z^1)(dy) +   \theta_2 \1_{\{y_1=0, y_2>0 \}} y_2^{-1}\mathcal{L}(Z^2)(dy) \\
  & \ + \theta_1 \1_{\{y_1>0, y_2>0 \}} y_1^{-1}\mathcal{L}(Z^1)(dy).
\end{align}
This formula may shed some light on the form of  L\'evy measure of a 2-dimensional permanental vector in \citet{V-J67}, which has positive masses on the axes.
}
\end{remark}

\bigskip



Isomorphism identities can also be useful for L\'evy processes. We begin with a corollary to  Theorem \ref{t:iso2}.

\begin{corollary}\label{c:Lp}
	Let $X=(X_t)_{t\ge 0}$ be a L\'evy process such that $\E e^{i u X_t} = e^{t K(u)}$, where 
\begin{equation} \label{Lp-LK}
  K(u) = - \frac{1}{2} \sigma^2 u^2 + \int_{\R} (e^{iux} -1 - iu \lt x\rt) \, \rho(dx) + i c u\, . 
\end{equation}
Let $q: \R_{+} \times \R \mapsto \R_{+}$ be a measurable function such that  $\int_{\R_{+} \times \R} q(r,v) \, dr \rho(dv)=1$. Then for any measurable functional $F: \R^{[0,\infty)} \mapsto \R$
\begin{equation}\label{Lp-iso1} 
\E \int_{\R_{+} \times \R} F\left(\(X_t+ \1_{\{r \le t \}} v \)_{t\ge 0} \right) \, q(r,v)\, dr \rho(dv) =  \E [F\left( \(X_t\)_{t\ge 0}\right); \, g(X)]\, , 
\end{equation}
where $g(X) = \sum_{\{r>0: \, \Delta X_r \ne 0 \}} q(r, \Delta X_r)$ and $\Delta X_r = X_r - X_{r-}$. Conversely, 
\begin{align} 
   \E  [F\big( & \(X_t\)_{t\ge 0}  \big)\,  ;  g(X)>0 ] \label{Lp-iso2}  \\
   & = 
 \int_{\R_{+} \times \R} \E \big[ F\left(\(X_t+ \1_{\{r \le t \}} v \)_{t\ge 0} \right); \, (g(X)+ q(r,v))^{-1}\big] \,q(r,v)\, dr \rho(dv)\, .
\end{align}
Moreover, $g(X)>0$ a.s. if $\int_{\R_{+} \times \R} \1\{ q(r,v)>0 \} \, dr \rho(dv) = \infty$.
\end{corollary}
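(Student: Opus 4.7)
The plan is to recognize Corollary \ref{c:Lp} as a direct specialization of Theorem \ref{t:iso2} to the L\'evy-It\^o representation of $X$ derived in Corollary \ref{e:LI}(a), once $N(q)$ is identified with the jump functional $g(X)$.

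First I would invoke Theorem \ref{t:LI} with the representation $V_t(r,v) = \1_{\{t \ge r\}}\, v$ of the L\'evy measure on $(\R_{+}\times \R, \lambda \otimes \rho)$ established in Example \ref{e:Lp}; this is admissible because the $\sigma$-algebra $\cB(\R_+)\otimes \cB(\R)$ is countably generated and the measure is $\sigma$-finite. Combined with the Gaussian part $\sigma W_t$ of the L\'evy triplet in \eqref{Lp-LK}, this puts $X$ in the form \eqref{gen-id} of Theorem \ref{t:iso2}, with $b(t)=c t$ and $G_t = \sigma W_t$. Plugging the specific $V_t(r,v) = \1_{\{r\le t\}} v$ into \eqref{iso2} and \eqref{iso3} immediately yields the displayed identities \eqref{Lp-iso1} and \eqref{Lp-iso2}, modulo the identification of the term $N(q)$.

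The main step, and the only one requiring a small argument, is to show that
\[
  N(q) = \int_{\R_+\times \R} q(r,v)\, N(dr,dv) \;=\; \sum_{\{r>0:\, \Delta X_r \ne 0\}} q(r,\Delta X_r) \;=\; g(X).
\]
Since $\rho$ is a L\'evy measure, $\rho(\{0\})=0$ and hence the Poisson random measure $N$ places all of its atoms in $\R_+\times(\R\setminus\{0\})$. The L\'evy-It\^o representation in Corollary \ref{e:LI}(a) expresses $X$ as the compensated sum of jumps driven by $N$, so the atoms of $N$ are in one-to-one correspondence with the jump times and jump sizes of $X$, i.e.\ the atoms of $N$ are exactly the points $(r,\Delta X_r)$ with $\Delta X_r\ne 0$. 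Substituting this identification into the integral against $q$ gives $N(q)=g(X)$.

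Finally, for the last assertion, observe that
\[
  \{g(X)>0\} = \{N(\{q>0\}) \ge 1\},
\]
and $N(\{q>0\})$ is a Poisson random variable with mean $\int \1_{\{q>0\}}\, dr\,\rho(dv)$. When this mean is infinite, $N(\{q>0\}) = \infty$ almost surely, so $g(X)>0$ a.s., as claimed. I expect no serious obstacle beyond carefully verifying the atom-by-atom identification $N(q)=g(X)$; everything else is a mechanical substitution into Theorem \ref{t:iso2}.
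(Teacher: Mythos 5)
Your proposal is correct and follows essentially the same route as the paper: apply Theorem \ref{t:iso2} to the L\'evy--It\^o form of $X$ with the representation $V_t(r,v)=\1_{\{r\le t\}}v$ from Example \ref{e:Lp}, identify $N(q)$ with $g(X)$ because the atoms of $N$ are exactly the jump times and sizes of $X$, and get the final claim from $\P(N(q)>0)=1-\exp(-n\{q>0\})$. The only cosmetic difference is that the paper takes $N$ directly as the jump measure $\sum_{\{r:\,\Delta X_r\ne 0\}}\delta_{(r,\Delta X_r)}$ via the classical L\'evy--It\^o decomposition rather than routing through Theorem \ref{t:LI}, which makes the identification $N(q)=g(X)$ immediate.
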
 

\np 
{\bf Proof:} This is a direct application of Theorem \ref{t:iso2}. Indeed, has the L\'evy-It\^o decomposition
$$
 X_t = G_t + \int_{\R_{+} \times \R} \1_{\{r \le t \}} v \, \big(N(dr, dv)-  \chi(v) dr \rho(dv) \big)  + c t, 
$$
where $N = \sum_{\{r: \, \Delta X_r \ne 0 \}} \delta_{(r, \Delta X_r)}$. Hence 
$$N(q)= \int_{\R_{+} \times \R} q(r,v) \, N(dr,dv)=\sum_{\{r>0: \, \Delta X_r \ne 0 \}} q(r, \Delta X_r) := g(X), $$ 
as desired. 
\qed
\bigskip

The next example specifies a set of admissible random translations for a Poisson process. 

\begin{example}
	Let $Y=\{Y_t\}_{t\ge 0}$ be a Poisson process with rate $\lambda>0$ and let $Z_t = \1_{[\zeta, \infty)}(t)$, $t \ge 0$, where $\zeta > 0$ is a random variable  with density $h$ and independent of $Y$. Then $\cL(Y+Z) \ll \cL(Y)$ and
$$
 \E [F\left( \(Y_t + Z_t\)_{t\ge 0}\right)] = \E [F\left( \(Y_t\)_{t\ge 0}\right) ; \, g(Y)]
$$
where $g(Y)= \lambda^{-1} \int_{0}^{\infty} h\, dY$. Conversely, 
\begin{equation}
	   \E  [F\big( \(Y_t\)_{t\ge 0}  \big)\,  ;  g(Y)>0 ] = \E  [F\big(  \(Y_t + Z_t\)_{t\ge 0}  \big)\,  ;  (g(Y) + \lambda^{-1} h(\zeta))^{-1} ]\, .
\end{equation} 
Moreover, $g(Y)>0$ a.s. if $\int_{\R_{+}} \1\{ h(r)>0 \} \, dr = \infty$, in which case $\cL(Y+Z)$ and $\cL(Y)$ are equivalent.
\end{example}
\np 
{\bf Proof.} It follows from Corollary \ref{c:Lp} for $q(r, v)=\lambda^{-1}h(r)$ and $\rho=\lambda \delta_1$.
\qed
\medskip

In the previous example function $q$ depended only on time variable. Now we consider the case when $q$ depends only on the space variable. 	

\begin{example}
	Let $X=(X_t)_{t\ge 0}$ be a L\'evy process as in Corollary \ref{c:Lp}. Let $q_1: \R \mapsto \R_{+}$ be such that $\int_{\R} q_1(v) \, \rho(dv) =1$. Consider a fixed time horizon $h>0$, so that $T=[0,h]$ and let
$$
 X_t = G_t + \int_{[0,h] \times \R} \1_{\{r \le t \}} v \, \big(N(dr, dv)-  \chi(v) dr \rho(dv) \big)  + c t, \quad t \in [0,h]\, .
$$
To apply Theorem \ref{t:iso2}, take $q(r,v) = h^{-1} q_1(v)$, so that $\int_{[0,h] \times \R} q(r,v) \, dr \rho(dv)=1$, and compute
\begin{align*} 
  N(q) = \int_{[0,h] \times \R} q(r,v) \, N(dr, dv) = h^{-1} \int_{[0,h] \times \R} q_1(v)  \, N(dr, dv) := h^{-1} W_h\, ,
\end{align*}
where $W=(W_{h})_{h\ge 0}$ is a subordinator with L\'evy measure $\rho_1$ given by $\rho_1(x,\infty) = \rho(q_1^{-1}(x,\infty))$, $x\ge 0$. Then, for every  measurable $F: \R^{[0,h]} \mapsto \R$
\begin{equation} \label{Lp-iso3}
 \E \int_{\R} F\left(\(X_t+ \1_{\{r \le t \}} v \)_{t\in [0,h]} \right) \, q_1(v)\, \rho(dv) =  h^{-1} \E [F\left( \(X_t\)_{t\in [0,h]}\right); \, W_h]\, , 
\end{equation}
and
\begin{align} 
   \E  [F\big( & \(X_t\)_{t\in [0,h]}  \big)\,  ;  W_h>0 ] \label{Lp-iso4}  \\
   & = 
h \int_{\R} \E  \big[ F\left(\(X_t+ \1_{\{r \le t \}} v \)_{t\in [0,h]} \right); \, ( W_h+ q_1(v))^{-1}\big] \, q_1(v)\,  \rho(dv)\, .
\end{align}
\end{example}

\bigskip

To illustrate usefulness of these formulas, in the next corollary we  give an alternative proof to a known fact on the behavior of the distributions of the L\'evy process at the origin. Actually, we prove a more general version of this fact, see, e.g., \cite[Corollary 8.9]{Sato(99)}, and the last statement of the corollary seems to be new. 
 
\begin{proposition}\label{p:Lp0}
	Let $X=(X_t)_{t\ge 0}$ be a L\'evy process and let $\rho$ be the L\'evy measure of $X_1$. 
Let $f: \R   \mapsto \R$ be a bounded function continuous on a set of full $\rho$-measure  such that $f(x)=o(x^2)$ as $x \to 0$ (or $f(x)=O(x^2)$ when $X$ has no Gaussian component).  Then
\begin{equation} \label{LP3}
  \lim_{h\to 0} h^{-1} \E f(X_h) =  \int_{\R } f(v) \, \rho(dv)\, .
\end{equation}
If $X$ is a subordinator, then the assumption of continuity can be weakened to the right-continuity.
\end{proposition}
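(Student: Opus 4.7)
The plan is to apply the isomorphism identity \eqref{Lp-iso3} from the preceding example to the functional $F(x) = f(x(h))$ with the weight $q_1 = \rho(A_\epsilon)^{-1}\1_{A_\epsilon}$, where $A_\epsilon = \{v\in\R:|v|>\epsilon\}$ and $\rho(A_\epsilon)<\infty$. After cancellation this reduces to the Mecke-type identity
$$h^{-1}\E[f(X_h)\, N_h(A_\epsilon)] = \int_{A_\epsilon}\E f(X_h+v)\,\rho(dv),$$
where $N_h(A_\epsilon)$ counts the jumps of $X$ of absolute size exceeding $\epsilon$ up to time $h$ and is Poisson with parameter $h\rho(A_\epsilon)$.

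First I would pass to the limit $h\to 0$ on both sides. Since $X_h \to 0$ in probability and $f$ is bounded and continuous at $\rho$-a.e.\ point, dominated convergence (in $\omega$ and then in $v$) yields $\int_{A_\epsilon}\E f(X_h+v)\,\rho(dv) \to \int_{A_\epsilon} f(v)\,\rho(dv)$. On the other side, the difference $N_h(A_\epsilon) - \1_{\{N_h(A_\epsilon)\ge 1\}} = (N_h(A_\epsilon)-1)_+$ is nonnegative with expectation $1-e^{-h\rho(A_\epsilon)} - (1-e^{-h\rho(A_\epsilon)}) + \cdots = O(h^2)$, so boundedness of $f$ gives $h^{-1}\E[f(X_h);\, N_h(A_\epsilon)\ge 1] \to \int_{A_\epsilon} f\,d\rho$ as $h\to 0$.

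Next I would split
$$\E f(X_h) = \E[f(X_h);\, N_h(A_\epsilon)\ge 1] + e^{-h\rho(A_\epsilon)}\,\E f(X_h^{(\epsilon)}),$$
where $X^{(\epsilon)}$ is the L\'evy process obtained from $X$ by deleting its jumps in $A_\epsilon$; on $\{N_h(A_\epsilon)=0\}$ one has $X_h = X_h^{(\epsilon)}$ by the independence of the large- and small-jump parts of the L\'evy-It\^o decomposition. Since $\int_{A_\epsilon} f\,d\rho \to \int f\,d\rho$ as $\epsilon\to 0$ (the integral is absolutely convergent under the hypotheses on $f$), the proof reduces to the residual estimate
$$\lim_{\epsilon\to 0}\limsup_{h\to 0}\, h^{-1}\bigl|\E f(X_h^{(\epsilon)})\bigr| = 0.$$

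The hard part is this residual estimate, particularly when a Gaussian component is present. Without a Gaussian part, a bounded function with $f(x)=O(x^2)$ near $0$ satisfies $|f|\le K x^2$ globally, so $h^{-1}|\E f(X_h^{(\epsilon)})|\le K h^{-1}\E(X_h^{(\epsilon)})^2 \to K\int_{|v|<\epsilon}v^2\rho(dv)$, which vanishes as $\epsilon\to 0$. With a Gaussian component the $o(x^2)$ hypothesis allows the splitting $|f(x)|\le\eta x^2 + M\1_{\{|x|>\delta\}}$ for any $\eta>0$ and suitable $\delta=\delta(\eta)>0$; the quadratic part contributes at most $\eta(\sigma^2 + \int_{|v|<\epsilon}v^2\rho(dv))$ after dividing by $h$, while the indicator part is killed by a Chernoff-type bound for $X_h^{(\epsilon)}$, whose bounded jumps and $O(h)$ variance yield $h^{-1}P(|X_h^{(\epsilon)}|>\delta)\to 0$ exponentially fast. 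Letting $\epsilon\to 0$ and then $\eta\to 0$ completes the argument. The subordinator case is simpler: $X_h\downarrow 0$ almost surely, so right-continuity of $f$ at $\rho$-a.e.\ point suffices in the dominated convergence step.
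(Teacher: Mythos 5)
Your proposal is correct in substance and follows the same overall strategy as the paper: apply the L\'evy-process isomorphism identity with $q_1$ proportional to the indicator of large jumps, split off the event that no large jump occurs before time $h$, and control the residual using the growth hypothesis on $f$. The differences are in execution. You use the forward identity \eqref{Lp-iso3} (equivalently the Mecke formula) and then convert $N_h(A_\epsilon)$ into $\1_{\{N_h(A_\epsilon)\ge 1\}}$ by the elementary estimate $\E(N_h(A_\epsilon)-1)_+=O(h^2)$; the paper instead invokes the converse identity \eqref{Lp-iso4} directly, which produces the decomposition into $K_1,K_2,K_3$ in one stroke. The more substantive divergence is the residual estimate $h^{-1}\E f(X_h^{(\epsilon)})$ in the presence of a Gaussian component: you split $|f(x)|\le \eta x^2+M\1_{\{|x|>\delta\}}$ and appeal to a Chernoff-type bound $h^{-1}\P(|X_h^{(\epsilon)}|>\delta)\to 0$, whereas the paper writes $|f(x)|\le x^2k(x)$ with $k$ bounded and $k(x)\to 0$, and disposes of the Gaussian contribution by Cauchy--Schwarz, $\E\big(|G_h|^2k(X_h^{\delta})\big)\le(\E|G_h|^4)^{1/2}(\E k(X_h^{\delta})^2)^{1/2}=\sqrt{3}\,\sigma^2 h\,(\E k(X_h^{\delta})^2)^{1/2}\to 0$. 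The paper's trick avoids any tail estimate for the small-jump process; your route buys nothing extra here and leaves one real obligation unproved: you must actually establish $h^{-1}\P(|X_h^{(\epsilon)}|>\delta)\to 0$. This is true (for $\epsilon<\delta$, via the exponential moment bound $\log\E e^{\lambda \tilde X_h^{(\epsilon)}}\le h\big[\tfrac{1}{2}\sigma^2\lambda^2+\epsilon^{-2}(e^{\lambda\epsilon}-1-\lambda\epsilon)\int_{|v|\le\epsilon}v^2\rho(dv)\big]$ with $\lambda=\epsilon^{-1}\log(1/h)$, giving decay of order $h^{\delta/(2\epsilon)}$), but it is not ``exponentially fast'' in $h$ and it is not a consequence of ``$O(h)$ variance'' alone --- a bare second-moment bound gives only $\P(|X_h^{(\epsilon)}|>\delta)=O(h)$, which is insufficient. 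Note also that this step forces you to take $\epsilon$ small relative to $\delta=\delta(\eta)$, so the order of limits ($\limsup_{h}$, then $\epsilon\to 0$ with $\eta$ fixed, then $\eta\to 0$) must be kept exactly as you state it. With that estimate supplied, your argument is complete, including the subordinator refinement.
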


\medskip

\begin{corollary}\label{}
	If $X$ is a  L\'evy process then for any $\rho$-continuity set $B$ with $0 \notin \bar{B}$ we have $\lim_{h\to 0} h^{-1} \P(X_{h} \in B) = \nu(B)$. If in addition $X$ is a subordinator, then for every $r > 0$ we  have ${\lim_{h\to 0} h^{-1} \P(X_{h} \in [r, \infty)) = \nu([r, \infty))}$.
\end{corollary}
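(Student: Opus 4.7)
The plan is to apply Proposition~\ref{p:Lp0} directly to the indicator functions $\1_B$ and $\1_{[r,\infty)}$, since the corollary is essentially asking us to extend the conclusion of the proposition from continuous bounded test functions to these particular bounded discontinuous ones.

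For the first assertion, I would take $f=\1_B$. Since $0\notin\bar B$, there exists $\epsilon>0$ with $B\cap(-\epsilon,\epsilon)=\emptyset$, so $f$ vanishes on $(-\epsilon,\epsilon)$ and therefore $f(x)=o(x^2)$ as $x\to 0$ (indeed $f(x)=O(x^2)$, which also covers the subcase without Gaussian component). The function $f$ is bounded by $1$, and its discontinuity set is exactly $\partial B$; the $\rho$-continuity hypothesis $\rho(\partial B)=0$ places those discontinuities on a $\rho$-null set, so $f$ is continuous on a set of full $\rho$-measure. Proposition~\ref{p:Lp0} would then yield
$$\lim_{h\to 0}h^{-1}\P(X_h\in B)=\lim_{h\to 0}h^{-1}\E f(X_h)=\int_{\R}\1_B(v)\,\rho(dv)=\nu(B).$$

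For the second assertion, $X$ is a subordinator and hence has no Gaussian component, so I would invoke the right-continuous version of Proposition~\ref{p:Lp0} with $f=\1_{[r,\infty)}$, $r>0$. This $f$ is bounded by $1$ and right-continuous everywhere on $\R$; since it vanishes on $(-\infty,r)$, hence in a neighborhood of $0$, the required bound $f(x)=O(x^2)$ near the origin holds trivially. The proposition then gives
$$\lim_{h\to 0}h^{-1}\P(X_h\in[r,\infty))=\int_{\R}\1_{[r,\infty)}(v)\,\rho(dv)=\nu([r,\infty)).$$

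There is no real obstacle: the corollary is a direct specialization of Proposition~\ref{p:Lp0}, and all that needs checking is that the two indicator functions satisfy its hypotheses. The small-$x$ decay is automatic because each indicator vanishes in a neighborhood of $0$; the discontinuity condition on $\1_B$ is precisely the $\rho$-continuity assumption on $B$; and the relaxation from continuity to right-continuity in the subordinator case is tailor-made for half-lines $[r,\infty)$, whose indicator is right-continuous but not continuous at $r$ (where the Lévy measure may well have an atom).
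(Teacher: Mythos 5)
Your proposal is correct and is exactly the paper's argument: the author likewise proves the corollary by applying Proposition \ref{p:Lp0} to $f=\1_{B}$ and $f=\1_{[r,\infty)}$, leaving the verification of the hypotheses implicit where you spell it out. The details you supply (vanishing of each indicator near $0$, $\rho$-nullity of $\partial B$, right-continuity of $\1_{[r,\infty)}$ for the subordinator case) are all accurate.
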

\np 
{\bf Proof:} In the first part of the corollary we take $f(x)= \1_{B}(x)$ and in the second part we take $f(x) = \1{[r, \infty) }(x)$, and apply the above.

\bigskip

\section{Series representations and isomorphism identities}\label{s:series}

Here we will show how representations of L\'evy measures lead to series representations of Poissonian infinitely divisible processes. This method of constructing series representations was initiated in \cite{Rosinski90} and further developed in \cite{Rosinski01}.

\bigskip

\begin{theorem}\label{t:series}
Let $Y=\(Y_t \)_{t\in T}$ be a Poissonian infinitely divisible process with the generating triplet $(0, \nu, b)$. Let $V=\(V_t \)_{t\in T}$ be a representation of $\nu$ on a $\sigma$-finite measure space $(S, \mathcal{S}, n)$. Consider a probability measure $n^{(1)}$  equivalent to $n$, so that $n^{(1)}(ds) = g(s) n(ds)$ for some $g>0$ $n$-a.e.  Let $\(\xi _j \)_{j\in \N}$ be an i.i.d. sequence of random elements in $S$ with the common distribution $n^{(1)}$ 
 and let $\(\Gamma_j \)_{j\in \N}$ be a sequence of partial sums of i.i.d. mean-one exponential random variables independent of the sequence $\(\xi_j \)_{j\in \N}$.  
Then 
\begin{itemize}
  \item[\rm{(i)}] For every $t \in T$ the series
  \begin{equation} \label{S0}
  S_t^{(0)}:=\sum_{j=1}^{\infty} V_t(\xi_j)\1{\{g(\xi_j) \le \Gamma_j^{-1}\}}
\end{equation}
converges a.s. if and only if  the limit
\begin{equation} \label{S-C}
  c(t) := \lim_{j \to \infty} \int_{S} \lt V_t(s)\rt \, (j g(s) \wedge 1) \, n(ds) \quad  \text{exists}\, .
\end{equation}
If \eqref{S-C} holds then 
\begin{equation} \label{S0-rep}
 \(Y_t \)_{t\in T} \eid \(S_t^{(0)} + b(t) - c(t) \)_{t\in T}\, .
\end{equation}  
 In particular, if $Y_t \ge 0$, or more generally, if $\int_{\R^T} |x(t)| \wedge 1 \, \nu(dx) < \infty$, then \eqref{S-C} holds. 
 \smallskip
 \item[\rm{(ii)}] For every $t \in T$ the centered series
   \begin{equation} \label{S1}
  S_t:=\sum_{j=1}^{\infty} \Big[V_t(\xi_j)\1{\{g(\xi_j) \le  \Gamma_j^{-1}\}} - c_j(t)\Big]
\end{equation}
converges a.s. and 
\begin{equation} \label{S1-rep}
 \(Y_t \)_{t\in T} \eid \(S_t + b(t) \)_{t\in T}\, ,
\end{equation}
where
\begin{equation} \label{cj}
   c_j(t) = \int_{S} \lt V_t(s)\rt \, \big\{(j g(s) \wedge 1) - ((j-1) g(s) \wedge 1) \big\} \, n(ds)\, .
\end{equation}
Under \eqref{S-C}, $\sum_{j=1}^{\infty} c_j(t) = c(t)$, so that $S_t = S^{(0)}_t - c(t)$.
\end{itemize}
\end{theorem}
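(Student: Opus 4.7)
My approach is to identify $\{(\xi_j,\Gamma_j)\}_{j\ge 1}$ as a Poisson point process (PPP) on $S\times \R_+$ whose restriction and projection yield a PPP on $S$ with intensity $n$, and then match the series to the stochastic integral appearing in the canonical spectral representation of $Y$ (Proposition~\ref{p:LI-0}). First, I would invoke the marking theorem for Poisson processes to show that $M:=\sum_{j\ge 1}\delta_{(\xi_j,\Gamma_j)}$ is a PPP on $S\times\R_+$ with intensity $n^{(1)}(ds)\otimes du$. Restricting $M$ to $A:=\{(s,u):g(s)u\le 1\}$ and projecting onto the first coordinate, the mapping theorem then gives that
\[
N:=\sum_{j\ge 1}\1\{g(\xi_j)\Gamma_j\le 1\}\,\delta_{\xi_j}
\]
is a PPP on $S$ with intensity $n^{(1)}(ds)/g(s)=n(ds)$, using that $g>0$ $n$-a.e. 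Because $V$ is a representation of $\nu$, Proposition~\ref{p:LI-0} applied to this $N$ produces a version $\widetilde Y_t=\int_S V_t(s)[N(ds)-\chi(V_t(s))n(ds)]+b(t)$ of $Y$.

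The key algebraic input, which drives both parts, is the telescoping identity
\[
\sum_{j=1}^{J}c_j(t)=\int_S \lt V_t(s)\rt\,(Jg(s)\wedge 1)\,n(ds),
\]
so partial sums of $c_j(t)$ are precisely the compensators of truncated Poisson integrals. To prove (ii), I would compare the partial sum $S_t^{(J)}:=\sum_{j=1}^{J}[V_t(\xi_j)\1\{g(\xi_j)\Gamma_j\le 1\}-c_j(t)]$ with the time-truncated Poisson integral $I_t(r):=\int_S V_t\,[N_r(ds)-\chi(V_t)n_r(ds)]$, where $N_r:=\sum_j \1\{\Gamma_j\le r\}\1\{g(\xi_j)\Gamma_j\le 1\}\delta_{\xi_j}$ has finite intensity $n_r:=(rg\wedge 1)n$. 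Each $I_t(r)$ is a standard Poisson integral and, by the construction behind \eqref{PoisInt} together with the independent-increments structure of $I_t(\cdot)$, converges a.s.\ as $r\to\infty$ to $\widetilde Y_t-b(t)$; moreover $I_t(\Gamma_J)$ and $S_t^{(J)}$ agree on the random part and differ only by the deterministic compensator discrepancy $\int_S\lt V_t\rt[(\Gamma_J g\wedge 1)-(Jg\wedge 1)]\,dn$, which vanishes as $J\to\infty$ by the law of large numbers $\Gamma_J/J\to 1$ together with dominated convergence. An Ito--Nisio upgrade of in-probability convergence to a.s.\ convergence then yields the a.s.\ convergence of $S_t^{(J)}$ and identifies its limit in distribution with $Y_t-b(t)$.

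For (i), the formal equality $S_t^{(0)}=S_t+\sum_{j\ge 1}c_j(t)$ together with the deterministic character of $c_j(t)$ shows that $S_t^{(0)}$ converges a.s.\ if and only if $\sum_j c_j(t)$ converges; by the telescoping formula this is exactly condition \eqref{S-C}, in which case $c(t)=\sum_j c_j(t)$ and the distributional identity follows from (ii). When $\int_{\R^T}|x(t)|\wedge 1\,\nu(dx)<\infty$, the representation property transfers the integrability to $\int_S |V_t(s)|\wedge 1\,n(ds)<\infty$, whence \eqref{S-C} holds by dominated convergence (since $\lt V_t\rt$ is bounded and $(Jg\wedge 1)\uparrow 1$ $n$-a.e.).

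The principal obstacle will be the a.s.\ convergence of $S_t^{(J)}$: the summands are not independent since the $\Gamma_j$'s are partial sums of exponentials, so Kolmogorov's three-series theorem cannot be applied directly. The device above of working through the time-parametrized truncation $N_r$, where Poisson integrals over disjoint $\Gamma$-ranges are genuinely independent and the finite-intensity theory applies, and then reconciling it with the count-parametrized truncation via the concentration of $\Gamma_J$ around $J$, is what bridges this gap and allows the PPP structure to be exploited in full.
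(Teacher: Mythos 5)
Your overall architecture is the right one, and it is essentially the standard route: the paper itself gives no argument here, deferring to Theorem 4.1 of Rosi\'nski (2001), and your reduction --- identify $\sum_j\delta_{(\xi_j,\Gamma_j)}$ as a Poisson point process of intensity $n^{(1)}\otimes du$, restrict to $\{g(s)u\le 1\}$ and project to obtain a Poisson random measure with intensity $n$, then compare partial sums with the compensated Poisson integrals $I_t(r)$ via the telescoping identity $\sum_{j\le J}c_j(t)=\int_S\lt V_t\rt(Jg\wedge1)\,dn$ --- is exactly the skeleton of that proof. The gap is in the convergence step. First, the compensator discrepancy $D_J:=\int_S\lt V_t(s)\rt\,[(\Gamma_Jg(s)\wedge1)-(Jg(s)\wedge1)]\,n(ds)$ is not deterministic (it depends on $\Gamma_J$), and ``LLN plus dominated convergence'' does not show it vanishes: the law of large numbers only gives $\Gamma_J-J=o(J)$, while the actual fluctuations are of order $\sqrt{J\log\log J}$, and the relevant modulus $\psi(r):=\int_{\{g<1/r\}}|\lt V_t\rt|\,g\,dn$ satisfies only $\psi(r)=o(r^{-1/2})$ with no rate (even this needs a small argument, e.g.\ splitting on $\{|V_t|\wedge1\le\sqrt{g}\}$ and its complement and using $\int(|V_t|^2\wedge1)\,dn<\infty$ together with $\int g\,dn=1$). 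What that estimate buys is $\E|D_J|\le\E|\Gamma_J-J|\cdot\psi(J/2)+o(1)\le\sqrt{J}\cdot o(J^{-1/2})\to0$, i.e.\ $D_J\to0$ \emph{in probability}, not almost surely.

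Consequently your argument delivers $S^{(J)}_t\to I_t(\infty)$ only in probability, and the announced ``It\^o--Nisio upgrade'' cannot close the gap: It\^o--Nisio and L\'evy's equivalence theorem require independent summands, whereas the terms $V_t(\xi_j)\1\{g(\xi_j)\Gamma_j\le1\}-c_j(t)$ all share the sequence $(\Gamma_j)$. This is precisely the obstacle you flag at the end, but the device of passing through $N_r$ and reconciling the two truncations resolves it only up to convergence in probability. The standard repair --- and the actual content of the cited theorem --- is to condition on the entire sequence $(\Gamma_j)$: conditionally the summands \emph{are} independent, one applies the three-series theorem (equivalently, a conditional It\^o--Nisio argument) and verifies the three conditions for almost every realization of $(\Gamma_j)$ using the strong law for $\Gamma_j/j$; the specific centering $c_j(t)=A(j)-A(j-1)$ with $A(r)=\int_S\lt V_t\rt(rg\wedge1)\,dn$ is chosen exactly so that the conditional centering series converges a.s. Without this conditioning step the almost sure convergence claimed in both (i) and (ii) is not established. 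The remaining pieces --- the point-process identification, the telescoping identity, the deduction of (i) from (ii), the identification of the limit law via Proposition \ref{p:LI-0}, and the sufficiency of $\int_{\R^T}|x(t)|\wedge1\,\nu(dx)<\infty$ for \eqref{S-C} --- are correct.
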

\begin{proof}
	This proof is a routine application of Theorem 4.1 \cite{Rosinski01} and thus it is omitted.  
\end{proof}
 
\bigskip

\begin{remark}\label{r:series}
{\rm 
{\bf (a)} If $Y$ is separable in probability and defined on a rich enough probability space (see Theorem \ref{t:LI}), then proceeding as in \cite{Rosinski01},  we can choose $V_j, \Gamma_j$ on the same probability space as $Y$ such that \eqref{S0-rep} and \eqref{S1-rep} hold not only in distribution but also almost surely. 
\smallskip

\noindent{\bf (b)} If sample paths of $Y$ belong to a separable Banach space, then the pointwise convergent series \eqref{S0} and \eqref{S1} converge a.s.  in the norm of that Banach space. Such conclusion is generally false when sample paths of $Y$ belong to a non-separable Banach space. An exception is the Skorohod space under the uniform topology, which is not separable. However, in such space the series converge uniformly a.s., see  \cite{Basse-Rosinski13}. 
\smallskip

\noindent{\bf (c)} There is some analogy between series expansions of  Poissonian infinitely divisible process, such as in Theorem \ref{t:series}, and Karhunen-Lo\`{e}ve series representation of Gaussian processes. Exploring this analogy, one has the corresponding results for the oscillation and zero-one laws of Poissonian infinitely divisible process. 	See \cite{Cambanis-Nolan-Rosinski90} and \cite{Rosinski90a}.
}
\end{remark}

%
%

\begin{example}[Feller diffusions]\label{e:series-Fel} 
{\rm Let $Z=(Z_{t})_{t\in T}$ be a Feller diffusion, as in Example \ref{e:Feller}. Recall that $V_t = L^{\kappa t}_{\infty}$, $t \ge 0$ is a representation of the L\'evy measure of $Z$ on 
$(S, n)= (U_{+}, a \, n_{+})$. We will now give a probability measure $n^{(1)}$ equivalent to $a \, n_{+}$. Let $R(u)$ denote the length of an excursion $u \in U_{+}$. It is well-known that
$$
n_{+}\{u: R(u) > x \} = \frac{1}{\sqrt{2\pi}} x^{-1/2}
$$
see, e.g., \cite[Ch. 12, Proposition 2.8]{Revuz-Yor99}. Let $f: \R_{+} \mapsto \R_{+}$ be such that $f(x)=0$ only for $x=0$ and 
$$
\frac{1}{\sqrt{2\pi}} \int_{0}^{\infty} f'(x) x^{-1/2} \, dx = 1. 
$$
Put $n^{(1)}(du) := f(R(u))\, n_{+}(du)$. Then
$$
\int_{U_{+}} f(R(u))\, n_{+}(du) = \int_{0}^{\infty} f'(x) \, n_{+}\{u: R(u) > x \} \, dx = 1,
$$
so that $n^{(1)}$ is a probability measure such that $g(u) :=\frac{dn^{(1)}}{d(a \, n_{+})}(u) = a^{-1} f(R(u))$. Now we will apply Theorem \ref{t:series}(a). Let $\(\xi _j \)_{j\in \N}$ be an i.i.d. sequence of random elements in $U_{+}$ with the common distribution $f(R)\, dn_{+}$
and let $\(\Gamma_j \)_{j\in \N}$ be a sequence of partial sums of i.i.d. mean-one exponential random variables independent of the sequence $\(\xi_j \)_{j\in \N}$. Then
\begin{equation} \label{}
 Z_t \eid \sum_{j=1}^{\infty} L^{\kappa t}_{\infty}(\xi_j)\1{\{f(R(\xi_j)) \le  a\Gamma_j^{-1}\}}, \quad t \ge 0
\end{equation}
in the sense of equality of finite dimensional distributions. By Remark \ref{r:series}(b), the convergence holds also a.s. uniformly in $t$ on finite intervals. Let us take $f(x)=\sqrt{\frac{\pi}{2}}(x \wedge 1)$ for concreteness. Then the above formula becomes
$$	
 Z_t \eid   \sum_{j=1}^{\infty} L^{\kappa t}_{\infty}(\xi_j)\1\{ R(\xi_j) \wedge 1 \le  (2/\pi)^{1/2} a \Gamma_j^{-1}\}, \quad t \ge 0.
$$
This formula says that a Feller diffusion is the series of randomly trimmed total accumulated local times taken at the level $\kappa t$, $t \ge 0$ from an infinite sample of Brownian excursions. This sample is taken according to the density $(\pi/2)^{1/2}(R \wedge 1)$ with respect to $n_{+}$. 
}	
\end{example}

Along similar lines we obtain series representations of squared Bessel process.

\begin{example}[Squared Bessel processes]\label{e:series-Bes} 
{\rm 
Let $Y=(Y_{t})_{t\in T}$ be a squared $\beta$-dimensional Bessel process starting from 0 and $\beta>0$, as in Example \ref{e:Bessel}. Recall that $V_t = L^{t-(\cdot)}_{\infty}$, $t \ge 0$ is a representation of the L\'evy measure of $Y$ on $(S, n)= (\R_{+}\times U_{+}, \beta \lambda \otimes n_{+})$. Recall that $L^a_{\infty}(u) =0$ when $a \le 0$.
  Let $f$ and $R$ be as in Example \ref{e:series-Fel}. Put
 $$
n^{(1)}(dr, du) = (\beta e^{-\beta r} dr) \otimes f(R(u)) n_{+}(du), \quad r>0, \ u \in U_{+}. 
$$
$n^{(1)}$ is a probability measure  equivalent to $\beta \lambda \otimes n_{+}$. Let $\{\eta_{n}\}$ be an i.i.d. sequence of exponential random variables with parameter $\beta$, let $\(\xi _j \)_{j\in \N}$ be an i.i.d. sequence of random elements in $U_{+}$ with the common distribution $f(R)\, dn_{+}$
and let $\(\Gamma_j \)_{j\in \N}$ be a sequence of partial sums of i.i.d. mean-one exponential random variables. Assume that these sequences are independent of each other. We compute 
$$
g(r, u) = \frac{dn^{(1)}}{dn}(dr, du) = e^{-\beta r} f(R(u)).
$$
By Theorem \ref{t:series}(i) and Remark \ref{r:series}(b), 
$$
Y_t \eid  \sum_{j=1}^{\infty} L^{t-\eta_j}_{\infty}(\xi_j)\1{\{e^{-\beta \eta_j}f(R(\xi_j)) \le \Gamma_j^{-1}\}}, \quad t \ge 0\, 
$$
in the sense of equality of finite dimensional distributions and the series converges uniformly a.s. 
Again, choosing a specific $f$,  as at the end of Example \ref{e:series-Fel}, may give more insight into this representation.
}	
\end{example}

Series representations of L\'evy processes have been considered in many places, 
so we will only sketch representations resulting from Theorem \ref{t:series}.

\begin{example}[L\'evy process]\label{e:series-Lp} 
{\rm 
Let $Y=(Y_t)_{t\ge 0}$ be a Poissonian L\'evy process, as in Example \ref{e:Lp}. Then $V_t(r,v) = \1_{\{r \le t \}} v$ is a representation of the L\'evy measure on $(\R_{+} \times \R, \lambda \otimes \rho)$. Choose an arbitrary probability measure $n^{(1)}$ on $\R_{+} \times \R$ that is equivalent to $\lambda \otimes \rho$ and put $g(r,v)= \frac{dn^{(1)}}{d(\lambda \otimes \rho)}(r,v)$. Let $\xi_j=(\eta_j, \upsilon_j)$, $j \in \N$ be i.i.d. random variables with the common density $g$ with respect to $\lambda \otimes \rho$. Then by Theorem \ref{t:series}(ii)  and Remark \ref{r:series}(b), 
\begin{equation} \label{}
  Y_t \eid \sum_{j=1}^{\infty} \Big[\1_{\{\eta_j \le t \}} \upsilon_j \1{\{g(\eta_j, \upsilon_j) \le \Gamma_j^{-1}\}} - c_t(j)\Big] + b_t
\end{equation}
in the sense of equality of finite dimensional distributions  and the series converges uniformly almost surely. 
}	
\end{example}

\bigskip


\begin{theorem}[Series form of isomorphism]\label{t:iso-s}
	Under notation of Theorem \ref{t:series}, consider a Poissonian infinitely divisible process $Y=\(Y_t \)_{t\in T}$ having L\'evy measure $\nu$ and given  by \eqref{S1}	
\begin{equation} \label{}
  Y_t = \sum_{j=1}^{\infty} \Big[V_t(\xi_j)\1{\{g(\xi_j) \le  \Gamma_j^{-1}\}} - c_j(t)\Big] + b(t)\, ,
\end{equation}
where $V=\(V_t \)_{t\in T}$ is a representation of $\nu$ on  $(S, \mathcal{S}, n)$ and $g = \frac{d \mathcal{L}(\xi_1)}{d \nu} >0$.
Let $\xi_0$ be a random variable in $S$ independent of $\(\xi_j, \Gamma_j\)_{j\in \N}$ such that $\cL(\xi_0) \ll \cL(\xi_1)$ and let $q= \frac{d \mathcal{L}(\xi_o)}{d \nu}$. Then for any measurable functional $F: \R^T \mapsto  \R$
\begin{equation} \label{iso-s1}
  \E \left[ F\( (V_t(\xi_0) + Y_t)_{t \in T} \) \right] = \E \left[ F\( (Y_t)_{t \in T}\); \, Q \right] 
\end{equation}
where 
\begin{equation} \label{Q}
  Q = \sum_{j=1}^{\infty} q(\xi_j)\1\{g(\xi_j) \le \Gamma_j^{-1}\}\, .
\end{equation}
Conversely,
\begin{equation} \label{iso-s2}
  \E \left[ F\( (Y_t)_{t \in T}\); \, Q >0\right] = \E \int_{S}  \big[ F\left(\(V_t(s)+ Y_t\)_{t\in T} \right); \, (Q+ q(s))^{-1}\big] \, q(s)\,  n(ds)  \, .
\end{equation}
Moreover $Q>0$ a.s. provided $n\{s: q(s)>0 \}= \infty$.
\end{theorem}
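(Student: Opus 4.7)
The plan is to reduce Theorem \ref{t:iso-s} to Theorem \ref{t:iso2} by re-interpreting the series representation as an integral against a Poisson random measure on $(S,\mathcal{S},n)$.

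First I would identify the Poisson random measure underlying the series. Define
\begin{equation}
N := \sum_{j=1}^{\infty} \delta_{\xi_j}\,\1\{g(\xi_j)\le \Gamma_j^{-1}\}.
\end{equation}
Since $(\xi_j,\Gamma_j)_{j\ge 1}$ enumerates the points of a Poisson random measure on $S\times \R_+$ with intensity $n^{(1)}\otimes \lambda$, restricting to the region $\{g(s)\gamma\le 1\}$ and projecting onto $S$ yields a Poisson random measure whose intensity at $s$ is $g(s)^{-1}n^{(1)}(ds)=n(ds)$. Thus $N$ is a Poisson random measure on $(S,\mathcal{S})$ with intensity $n$, independent of $\xi_0$ (since $\xi_0$ is independent of the sequences $(\xi_j),(\Gamma_j)$). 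This is a standard step in series representations (cf.\ \cite{Rosinski90,Rosinski01}) and is the only nontrivial ingredient; everything else is bookkeeping.

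Second, I would observe that the series in Theorem \ref{t:series}(ii) is precisely the stochastic integral representation of $Y$ of Theorem \ref{t:iso2} with $G\equiv 0$, that is,
\begin{equation}
Y_t = \int_{S} V_t(s)\,\bigl[N(ds)-\chi(V_t(s))\,n(ds)\bigr] + b(t),
\end{equation}
the constants $c_j(t)$ in \eqref{cj} accounting exactly for the centering measure $\chi(V_t(s))\,n(ds)$ summed over the points of $N$. Hence $Y$ falls under the hypotheses of Theorem \ref{t:iso2}.

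Third, with the same choice of $q$, the random variable $N(q)$ in Theorem \ref{t:iso2} becomes
\begin{equation}
N(q) \;=\; \int_S q(s)\,N(ds) \;=\; \sum_{j=1}^{\infty} q(\xi_j)\,\1\{g(\xi_j)\le \Gamma_j^{-1}\} \;=\; Q.
\end{equation}
Since $\xi_0$ is independent of $(\xi_j,\Gamma_j)_{j\ge 1}$, and therefore of $Y$ and $N$, with law $q\,dn$, conditioning on $\xi_0$ rewrites the left-hand side of \eqref{iso2} as
\begin{equation}
\int_{S} \E F\bigl((Y_t+V_t(s))_{t\in T}\bigr)\, q(s)\,n(ds) \;=\; \E F\bigl((V_t(\xi_0)+Y_t)_{t\in T}\bigr),
\end{equation}
while the right-hand side is $\E[F((Y_t)_{t\in T});\,Q]$. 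This is \eqref{iso-s1}. The converse identity \eqref{iso-s2} follows in exactly the same way from \eqref{iso3}, and the final claim that $Q>0$ a.s.\ when $n\{q>0\}=\infty$ is immediate from the corresponding statement in Theorem \ref{t:iso2}, or directly from the fact that $N(\{q>0\})$ is Poisson with infinite mean. The only place requiring care is the first step, i.e.\ the justification that $N$ really is a Poisson random measure with intensity $n$; everything else is a direct translation from the integral picture to the series picture.
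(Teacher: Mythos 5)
Your proposal is correct and follows essentially the same route as the paper: both reduce the statement to Theorem \ref{t:iso2} by recognizing the series as a compensated Poisson integral with respect to the point process generated by $(\xi_j,\Gamma_j)_{j\ge 1}$ and identifying $N(q)$ with $Q$. The only cosmetic difference is that the paper keeps the auxiliary coordinate, restricting $\sum_j \delta_{(\xi_j,\Gamma_j)}$ to $\bar{S}=\{(s,r): 0\le r\le g(s)^{-1}\}$ with $\bar{V}_t(s,r)=V_t(s)$ and $\bar{q}(s,r)=q(s)\1\{r\le g(s)^{-1}\}$, whereas you project that restricted measure onto $S$ to get a Poisson random measure with intensity $n$ directly; the two devices are equivalent.
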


\bigskip

\section{Proofs}\label{s:Proofs}

\np 
{\bf Proof of Lemma \ref{l:nu*}. } Clearly, the left hand side is greater or equal than the right hand side  in \eqref{nu*}. To prove the reverse inequality,  take $A \in \cB^T$. Since
$$
 \nu_{*}(A \setminus 0_T) = \sup\{ \nu(D): \ D \subset A \setminus 0_T, \ D \in \cB^T\} \, ,
$$
there exists an
	$A_0 \in \cB^T$  with $A_0 \subset A \setminus 0_T $  such that $\nu(A_0) = \nu_{*}(A \setminus 0_T)$. From the structure of $\cB^T$, $A_0 = \pi_U^{-1}(B)$ for some set $U \in \hT_c$ and $B \in \cB^U$, with $0_U \notin B$. Let $J_n \in \hT$ be such that $J_n \uparrow U$. By the continuity of $\nu$ from below, 
$$
\nu_{*}(A \setminus 0_T)=\nu(A_0)= \nu(A_0 \setminus  \pi_U^{-1}(0_U)) = \lim_{n\to \infty} \nu(A_0 \setminus  \pi_{J_n}^{-1}(0_{J_n})) \le \sup_{J \in \hT} \nu(A \setminus \pi^{-1}_J(0_J) )\, ,
$$
which establishes \eqref{nu*}. 

Since the measures $\nu( \cdot \setminus \pi^{-1}_J(0_J) )$, $J \in \hT$ are increasing as $J$'s are increasing and $\hT$ is a directed set under the inclusion, 
$$
\nu^0(\cdot) = \sup_{J \in \hT} \nu( \cdot \setminus \pi^{-1}_J(0_J) )
$$
is a measure. If $0_{T} \notin A \in  \cB^T$, then
$$
\nu^0(A) = \nu_{*}(A \setminus 0_T) = \nu_{*}(A) = \nu(A),
$$
which gives \eqref{nu_0}. Using \eqref{nu*}-\eqref{nu_0} we get for every $A \in  \cB^T$ 
$$
(\nu^0)_{\ast}(A \setminus 0_T) = \sup_{J \in \hT} \nu^0(A \setminus \pi^{-1}_J(0_J) ) = \sup_{J \in \hT} \nu(A \setminus \pi^{-1}_J(0_J) ) = \nu^0(A),
$$
which establishes (L\ref{LM0}).
Finally, if $\nu$ satisfies (L\ref{LM}), then so does $\nu^0$. \qed

\bigskip

\np
{\bf  Proof of Lemma \ref{LM-equiv}. } \  (L\ref{LM0}) $\Rightarrow$ (a). Let $T_0 \in \hT_c$. By (L\ref{LM0}) and \eqref{nu*}  there exists an increasing sequence  $J_n \in \hT$ such that 
$$
\nu\{x: x_{T_{0}} = 0\}= \lim_{n \to \infty} \nu\{x: x_{T_{0}} = 0, \ x_{J_{n}} \ne 0\}.
$$
Therefore, $T_1= \bigcup J_n$ satisfies (a).
\smallskip

\np
(a) $\Rightarrow$ (b). Let $J_n$ be as above. (a) implies that for some $n$,  $\nu\{x: x_{T_{0}} = 0, \ x_{J_{n}} \ne 0\}>0$. Hence \eqref{unc1} holds for some $t \in J_n$.
\smallskip

\np 
(b) $\Rightarrow$ (c). If \eqref{cLM} and \eqref{unc1} do not hold, then we have a contradiction with (b). 
\smallskip

\np 
(c)  $\Rightarrow$ (L\ref{LM0}). Since by Remark \ref{r:cLM}, \eqref{cLM} implies (L\ref{LM0}), we only need to consider the second part of the alternative in (c); i.e.,  we assume that \eqref{unc1} holds for any set $T_0 \in \hT_c$. 
Let  $A \in \cB^T$. (L\ref{LM0}) obviously holds when $0_T \notin A$, so we consider the case $O_T \in A$. Using \eqref{nu*}, as in the first implication of this proof, 
we infer that $\nu_{\ast}(A \setminus 0_T) = \nu(A \setminus \{x: x_{T_1} = 0 \})$ for some $T_1 \in \hT_c$. There is also a countable set $T_0 \supset T_1$ and $B \in \mathcal{B}^{T_0}$ such that $A = \{x: x_{T_0} \in B \}$. Let $t \notin T_0$ be such that $\nu\{x: x_{T_0} = 0, \ x(t) \ne 0\} = \alpha > 0$. Since $\nu_{\ast}(A \setminus 0_T) = \nu(A \setminus \{x: x_{T_1} = 0 \})$ holds also for any larger set in place of $T_1$, we get
\begin{align*} 
  \nu_{\ast}(A \setminus 0_T) &= \nu(A \setminus \{x: x_{T_0 \cup \{t \}} = 0 \}) \\
  &= \nu(A \setminus \{x: x_{T_0} = 0 \}) + \nu(A \cap \{x: x_{T_0} = 0 \} \cap \{x: x(t) \ne 0 \}) \\
  &= \nu(A \setminus \{x: x_{T_0} = 0 \}) + \nu(\{x: x_{T_0} = 0, \ x(t) \ne 0 \}) \\
  & = \nu_{\ast}(A \setminus 0_T) + \alpha
\end{align*}
where in the third equality we used that $0_T \in A$. The above computation shows that $\nu_{\ast}(A \setminus 0_T) = \infty$, in which case (L\ref{LM0})  trivially holds. The proof is complete. 
\qed

\bigskip

\np
{\bf  Proof of Theorem \ref{t:LK-P}.}   
The proof of Theorem \ref{LK-P} is divided into four steps.

\medskip

{\it Step 1. 
Let $J \in \hat T$ and $\epsilon>0$  be fixed. Set $U=\{y\in \R^J: \max_{t\in J} |y_t|>\epsilon \}$ and define a family of measures \ $\lambda_{I}^{J, \epsilon}$ on $(\R^I, \mathcal{B}^I)$ by
\begin{equation} \label{lje}
  \lambda_{I}^{J, \epsilon}(B) = \nu_K\left(\pi^{-1}_{KI}(B) \cap  \pi^{-1}_{KJ}(U) \right),  \quad B \in 
\mathcal{B}^I,
\end{equation}
where $I \in \hT$ and $K = I \cup J$. We will show that there exists a finite measure $\lambda^{J, \epsilon}$ on $(\R^T, \cB^T)$ such that 
\begin{equation} \label{step1}
  \lambda^{J, \epsilon} \circ \pi^{-1}_{I} =  \lambda_{I}^{J, \epsilon}  \quad \text{for all } I \in \hat{T}.
\end{equation}
}

First observe that all measures $\lambda_{I}^{J, \epsilon}$, $I \in \hat T$, have equal finite mass. Indeed,  by (c\ref{c3}) and the fact that $\nu_J$ is a L\'evy measure on $\R^J$
$$
\lambda_{I}^{J, \epsilon}(\R^I) = \nu_K\left(\pi^{-1}_{KJ}(U) \right) =\nu_J(U) < \infty. 
$$
Next we will show that $\{\lambda^{J, \epsilon}_I: I \in \hat T \}$ is a projective system. Take $I_1 \subset I_2 \in \hT$ and put $K_1=J \cup I_1$, $K_2=J \cup I_2$. We have for every $B \in \cB^{I_1}$
\begin{align*}
        \lambda_{I_2}^{J, \epsilon} \left( \pi_{I_2,I_1}^{-1}(B) \right) &=   
        \nu_{K_2}\left(\pi^{-1}_{K_2,I_2}(\pi_{I_2,I_1}^{-1}(B)) \cap  \pi^{-1}_{K_2,J}(U) \right) \\
         &=   
        \nu_{K_2}\left(\pi^{-1}_{K_2,I_1}(B) \cap  \pi^{-1}_{K_2,J}(U) \right) \\
        &= \nu_{K_2}\left(\pi^{-1}_{K_2,K_1}  \left( \pi_{K_1,I_1}^{-1}(B) \cap  \pi^{-1}_{K_1,J}(U) \right) \right) \\
        &= \nu_{K_1}\left( \pi_{K_1,I_1}^{-1}(B) \cap  \pi^{-1}_{K_1,J}(U) \right) = \lambda_{I_1}^{J, \epsilon}(B). 
\end{align*}
In the fourth equality we  used (c\ref{c3}) as $ \pi_{K_1,I_1}^{-1}(B) \cap  \pi^{-1}_{K_1,J}(U)$ does not contain the origin of $\R^{K_1}$. By Kolmogorov's Extension Theorem there exists a finite measure $\lambda^{J, \epsilon}$ on $(\R^T, \cB^T)$ satisfying \eqref{step1}. 

\bigskip

{\it Step 2. Define
\begin{equation} \label{step2}
  \nu(A) = \, \sup_{J \in \hT, \, \epsilon>0} \,  \lambda^{J, \epsilon}(A), \quad A \in \cB^{T}.
\end{equation}
Then $\nu$ is a measure satisfying \eqref{LM-cons}. 
}
\medskip

First we observe from \eqref{lje} that for every $I \in \hT$, $\lambda_{I}^{J_1, \epsilon_1} \le \lambda_{I}^{J_2, \epsilon_2}$ whenever $J_1 \subset J_2$ and $\epsilon_1 \ge \epsilon_2$. This implies, in conjunction with \eqref{step1}, that $\lambda^{J_1, \epsilon_1}(A) \le \lambda^{J_2, \epsilon_2}(A)$ for every $A$ from the algebra of cylinders, $A \in \pi^{-1}_I(\cB^I)$, $I \in \hT$. By the monotone class argument we obtain
\begin{equation} \label{lje-m}
  \lambda^{J_1, \epsilon_1} \le \lambda^{J_2, \epsilon_2} \quad \text{when } J_1 \subset J_2 \ \text{and } \epsilon_1 \ge \epsilon_2.
\end{equation}
We will now check that $\nu$ is a measure. For any pairwise disjoint sets $A_n \in \cB^T$ we have
\begin{align*} 
  \nu\left( \bigcup_{i=1}^{\infty} A_i \right) &= \, \sup_{J \in \hT, \, \epsilon>0} \,  \lambda^{J, \epsilon}\left( \bigcup_{i=1}^{\infty} A_i \right) = \, \sup_{J \in \hT, \, \epsilon>0} \, \sup_{n \in \N} \, \lambda^{J, \epsilon}\left( \bigcup_{i=1}^{n} A_i \right) \\
  &=  \,  \sup_{n \in \N} \, \sup_{J \in \hT, \, \epsilon>0} \, \lambda^{J, \epsilon}\left( \bigcup_{i=1}^{n} A_i \right) = \,  \sup_{n \in \N} \, \sup_{J \in \hT, \, \epsilon>0} \, \sum_{i=1}^{n}  \lambda^{J, \epsilon}\left( A_i \right) \\
  &= \,  \sup_{n \in \N} \,  \sum_{i=1}^{n} \, \sup_{J \in \hT, \, \epsilon>0} \, \lambda^{J, \epsilon}\left( A_i \right) =  \sum_{i=1}^{\infty}  \nu\left(A_i \right),
\end{align*}
where the fifth equality uses \eqref{lje-m}. Now we will show that  measure $\nu$ satisfies  \eqref{LM-cons}.  Let $I \in \hT$ and $B \in \mathcal{B}(\R^I)$. We have
\begin{align*} 
   \nu(\pi^{-1}_{I}(B \setminus 0_I)) &  \ge \, \sup_{\epsilon>0} \,  \lambda^{I, \epsilon}(\pi^{-1}_{I}(B \setminus 0_I)) \\
   & = \, \sup_{\epsilon>0} \, \nu_I(B \cap \{y\in \R^I: \max_{t\in I} |y_t|>\epsilon \})  = \nu_I(B),
\end{align*}
and, conversely,  
$$
\nu_I(B)= \nu_I(B \setminus 0_I) \ge \lambda^{J, \epsilon}_I(B \setminus 0_I) = \lambda^{J, \epsilon}(\pi^{-1}_{I}(B \setminus 0_I))
$$   
for every $J \in \hT$. Taking supremum over $(J, \epsilon)$  shows \eqref{LM-cons}, which completes Step 2.

\bigskip

{\it Step 3. 
$\nu$ is a L\'evy measure.
}
\medskip

We need to show (L\ref{LM0}). In view of \eqref{nu*} it is enough to show that
\begin{equation} \label{nu0}
 \nu(A)= \sup_{J \in \hT} \nu(A \setminus \pi^{-1}_J(0_J) ) \quad \text{for every } A \in \cB^T.
\end{equation}

First we will show that for any $J \in \hT$ and $\epsilon>0$
\begin{equation} \label{l0}
 \sup_{H \in \hT}  \lambda^{J, \epsilon}(A \setminus \pi^{-1}_H(0_H)) =  \lambda^{J, \epsilon}(A), \quad \text{for every } A \in \cB^T.
\end{equation}
By an argument similar to the proof of countable additivity of $\nu$ in the previous step, we infer that $A \mapsto \sup_{H \in \hT}  \lambda^{J, \epsilon}(A \setminus \pi^{-1}_H(0_H))$ is a finite measure on $\cB^T$.  Therefore, it is enough to show the equality in \eqref{l0} on the algebra of cylinders. Since ``$\le$'' is obvious, we will prove the opposite inequality.  Let $A= \pi^{-1}_I(B)$, where $I \in \hT$ and $B \in \cB^I$, and let $K=I\cup J$. We have
\begin{align*} 
  \sup_{H \in \hT}  \lambda^{J, \epsilon}(A \setminus \pi^{-1}_H(0_H)) &\ge \lambda^{J, \epsilon}(A \setminus \pi^{-1}_{K}(0_{K})) = \lambda^{J, \epsilon}(\pi^{-1}_K(\pi^{-1}_{KI}(B) \setminus 0_K)) \\
  &= \lambda^{J, \epsilon}_K(\pi^{-1}_{KI}(B) \setminus 0_K) = \lambda^{J, \epsilon}_K(\pi^{-1}_{KI}(B))\\
  &= \lambda^{J, \epsilon}(\pi^{-1}_{K}(\pi^{-1}_{KI}(B))) = \lambda^{J, \epsilon}(A)
\end{align*}
The second and the fifth equations use \eqref{step1}, and the third one follows from the definition of $\lambda^{J, \epsilon}_K$ and that $0_K \in \{y\in \R^K: \max_{t\in J} |y_t| \le \epsilon \}$. This proves \eqref{l0}. Taking supremum over $(J, \epsilon)$ in \eqref{l0} yields \eqref{nu0}.

\bigskip

{\it Step 4. $\nu$ is the smallest measure satisfying \eqref{LM-cons}, so  is unique.} 
\medskip

Suppose that $\rho$ also satisfies \eqref{LM-cons}. 
Let $A=\pi^{-1}_{I}(B)$,  where $I \in \hat T$ and $B \in \cB^I$. Using 
(L\ref{LM0}) and Lemma \ref{l:nu*} we get 
\begin{align*} 
  \nu(A) &= \nu_{\ast}(A \setminus 0_T) = \sup_{J \in \hT} \nu(A \setminus \pi^{-1}_J(0_J)) = \sup_{J \in \hT, \, J \supset I} \nu(\pi^{-1}_{J}(\pi^{-1}_{IJ}(B) \setminus 0_J) ) \\
  & = \sup_{J \in \hT, \, J \supset I} \rho(\pi^{-1}_{J}(\pi^{-1}_{IJ}(B) \setminus 0_J) ) = \sup_{J \in \hT} \rho(A \setminus \pi^{-1}_J(0_J)) = \rho_{\ast}(A \setminus 0_T)=\rho^0(A).
\end{align*}
Thus $\nu = \rho^0 \le \rho$ on the algebra of cylinders. By the monotone class argument the same relation holds on $\cB^T$. The proof is complete.
\qed

\bigskip

\np
{\bf Proof of Proposition \ref{p:can-spe-rep}.} The integral in \eqref{can} is well-defined by (L\ref{LM}). Given $J \in \hT$ and $a \in \R^J$, let $f(x) := \sum_{t \in J} a_t x(t)$. Then
\begin{align*} 
  \sum_{t \in J} a_t \(\widetilde{Y}_t - b(t)\)=  \sum_{t \in J} a_t I_N(x(t)) = I_N(f) + \int_{\R^T} \sum_{t \in J} a_t  x(t) [\chi(f(x)) -  \chi(x(t))] \, \nu(dx). 
\end{align*}
Using \eqref{int-cf} we get
\begin{align*} 
 \log \E \exp\(i \sum_{t \in J} a_t \(\widetilde{Y}_t - b(t)\)\) & =  \int_{\R^T} \(e^{i f(x)} - 1 - i f(x) \chi(f(x)) \)  \, \nu(dx)\\
 & \quad  + i \int_{\R^T} \sum_{t \in J} a_t x(t) [\chi(f(x)) -  \chi(x(t))] \, \nu(dx) \\
 & =  \int_{\R^T} \(e^{i f(x)} - 1 - i  \sum_{t \in J} a_t x(t) \chi(x(t)) \)  \, \nu(dx) \\
 & = \int_{\R^T} \(e^{i \langle a, x_J \rangle } - 1 - i \langle a, \lt x_J \rt \rangle  \)  \, \nu(dx)\, ,
\end{align*}
which gives \eqref{LK-fidi-P}. Therefore, $\widetilde{Y}$ is a version of $Y$. 
\qed

\bigskip

\np
{\bf Proof of Theorem \ref{t:cLM}.} (i) $\Rightarrow$ (ii). There is a set $A \in \cB^T$ such that $\nu(A)< \infty$ and $0_T \in A$. Then by \eqref{nu*}
$$
\nu(A) = \nu_{\ast}(A \setminus 0_T) = \sup_{J \in \hT} \nu(A \setminus \pi^{-1}_J(0_J) ) = \sup_{J \in \hT} \(\nu(A) - \nu(A \cap \pi^{-1}_J(0_J) )\).
$$
Hence $\inf_{J \in \hT} \nu(A \cap \pi^{-1}_J(0_J) )  =0 $ and $0_T \in A \cap \pi^{-1}_J(0_J)$, which gives (ii). 
\medskip

\np
(ii) $\Rightarrow$ (iii). Let $A_n \in \cB^T$ be such that $0_T \in A_n$ and $\nu(A_n)< n^{-1}$. There exist $T_n \in \hT_c$  and $B_n \in \cB^{T_n}$, with $0_{T_n} \in B_n$, such that $A_n = \{x: x_{T_n} \in B_n \}$. Let $T_0 = \bigcup_{n \ge 1} T_n$ and $A_0=  \{x: x_{T_0} = 0_{T_0} \}$. Since $A_0 \subset A_n$ for every  $n \ge 1$, we get $\nu(A_0)=0$,  which proves (iii). 
\medskip

\np
(iii) $\Rightarrow$ (i). Let $A_0 := \{x: x_{T_0} = 0 \}$. By the assumption $\nu(A_0)=0$. Enumerate $T_0$ as $T_0= \{t_k: k \in \N\}$. For every $k, n \in \N$ set 
$$A_{k,n} := \{x: |x(t_k)| \ge  n^{-1} \}.$$
$$
\nu(A_{k,n}) = \nu\{x: |x(t_k)| \wedge 1 \ge  n^{-1} \} \le n^2 \int_{\R^T} |x(t_k)|^2 \wedge 1 \, \nu(dx) < \infty.
$$
Since $A_0 \cup \bigcup_{k, n \in \N} A_{k,n} = \R^T$, (i) holds.
\qed

\bigskip

\np 
{\bf Proof of Theorem \ref{n-sf}.} 
Assume that $\nu$ is not $\sigma$-finite. Let $\widetilde{Y}$ be the canonical spectral representation of $Y$. That is, 
\begin{equation} \label{}
  \widetilde{Y}_t= \int_{\{x(t) \ne 0 \}}  x(t) \left[ N(dx) -  \chi(x(t)) \, \nu(dx) \right] + b(t), \quad t \in T
\end{equation}
where $N$ is a Poisson random measure on $\R^T$ with intensity $\nu$. Let $T_0 \in \hT_c$. By Corollary \ref{c:unc} there is $t_1 \notin T_0$ such that $\nu(A)>0$, where 
$$
A=\{x \in \R^T: x_{T_0} = 0, \, x(t_1) \ne 0 \}. 
$$
Define 
\begin{equation} \label{}
 \eta= \int_{A} x(t_{1}) \left[ N(dx) -  \chi(x(t_{1})) \, \nu(dx) \right]  \quad \text{and} \quad  \xi= \int_{B}  x(t_{1}) \left[ N(dx) -  \chi(x(t_{1})) \, \nu(dx) \right] + b(t_1),  
\end{equation}
where $B= \{x \in \R^T: x_{T_0} \ne 0, \, x(t_1) \ne 0 \}$.
Clearly conditions (a)--(c) hold.

Conversely, suppose (a)--(c) hold for some version $\widetilde{Y}$ of $Y$. Let $T_1= T_0 \cup \{t_1\}$ and $U= (U_t: t \in T_1)$ be given by $U_t = \widetilde{Y}_t$ when $t \in T_0$ and $U_{t_1} = \xi$; define also  $V= (V_t: t \in T_1)$ by $V_{T_0} = 0$ and $V_{t_1}=\eta$. Then $\widetilde{Y}_{T_1}=U+V$ and  processes $U$ and $ V$ are independent. Let $\nu_{\widetilde{Y}_{T_1}}$, $\nu_U$ and $\nu_{V}$ be L\'evy measures on $(\R^{T_1}, \cB^{T_1})$ of $\widetilde{Y}_{T_1}$, $U$ and $V$, respectively. We have $\nu_{\widetilde{Y}_{T_1}} = \nu_U + \nu_V$. Moreover, $\nu_V$ is concentrated on the $t_1$-axis, i.e. $\nu_V = \delta_{0_{T_0}} \otimes \nu_{\eta}$ with $\nu_{\eta}$ being a L\'evy measure of $\eta$. Hence
\begin{align*} 
  \nu\{x \in \R^T: & \ x_{T_0} = 0, \, x(t_1) \ne 0 \} = \nu\left( \pi^{-1}_{T_1}\left(0_{T_0} \times (\R \setminus \{0\})   \right)\right) \\
  & =\nu_{\widetilde{Y}_{T_1}}\left(0_{T_0} \times (\R \setminus \{0\})   \right) \ge \nu_{V}\left(0_{T_0} \times (\R \setminus \{0\})   \right) = \nu_{\eta}(\R) >0 
\end{align*}
since $\eta$ is non-degenerate. By Corollary \ref{c:unc} $\nu$ is not $\sigma$-finite, which completes the proof. \qed

\bigskip

%

\np
{\bf Proof of Lemma \ref{l:exact}.} Let $V$ be as in Definition \ref{def:rep}.  Let $f: S \mapsto (0, \infty)$ be a measurable function such that  $\int_{S} f(s) \, n(ds) < \infty$ and let  $n_1(ds) := f(s) n(ds)$ be a finite measure on $S$. Put
$$
\alpha =  \inf_{J \in \hT} \, n_1\{s\in S: V_{J}(s) = 0 \}.
$$
Then $\alpha \in [0,\infty)$ and there is $T_0 \in \hT_c$ such that $\alpha = n_1\{s\in S: V_{T_0}(s) = 0 \}$.  

Let $B \in \cB^{I}_0$, $I \in \hT$,  and let $T_1= T_0 \cup I$. Since $\{V_I \in B \} \subset \{V_{T_1}\ne 0\}$ and by the extremity of $T_0$,  \   $n_1\{V_{T_0} = 0, \,  V_{T_1} \ne 0 \} = 0$ so that $n\{V_{T_0} = 0, \,  V_{T_1} \ne 0 \} = 0$, we get
\begin{align*} \label{}
 \nu_I(B) &= n\{V_I \in B \} = n\{V_I \in B, \, V_{T_1} \ne 0 \}  \\
 & = n\{V_I \in B, \, V_{T_0} = 0, V_{T_1} \ne 0 \} +  n\{V_I \in B, \, V_{T_0} \ne 0, V_{T_1} \ne 0 \}  \\
 &= n\{V_I \in B, \, V_{T_0} \ne 0 \}\, .  
\end{align*}
Therefore, $V$ restricted to $S_0 = \{s\in S: V_{T_0}(s) \ne 0 \}$ is a representation of $\nu$ and is exact because it satisfies \eqref{cLM} (which implies (L\ref{LM0})). 
\qed

\bigskip

\np
{\bf Proof of Theorem \ref{t:LI}.}  
Consider on some probability space $(\Omega', \cF', \P')$ mutually independent centered Gaussian process $G'$ over $T$ with covariance $\Sigma$ and a Poisson random measure $N'$ on $(S, \cS)$ with intensity $n$. By Proposition \ref{p:LI-0} (with $G=G'$ and $N=N'$), we have $X\eid X' = G' + Y'$, where $Y'$ is the Poissonian part in \eqref{LI-0}. Now we restrict the index set to $T_0$ and write \eqref{LI-0} as
\begin{equation} \label{}
  X_{T_0} \eid G_{T_0}' + Y_{T_0}' = f(G'_{T_0}, N'_{\cS_0})
\end{equation}
where $N'_{\cS_0}$ is the restriction of $N'$ to  a countable algebra $\cS_0$ that generates $\cS$ modulo $n$. Indeed, since the stochastic integral with respect to $N'$ is a limit of integrals of simple functions and $N'$ on $\cS$ can be approximated by $N'$  on $\cS_0$, the right hand side of \eqref{LI-0} (restricted to $T_0$) can be represented as a Borel function $f$ of a random element $(G'_{T_0}, N'_{\cS_0})$ taking values in a Polish space $\R^{T_0} \times [0,\infty]^{\cS_0}$. By \cite[Corollary 6.11]{Kallenberg(01)} there exists a random element $\((G_{t})_{t\in T_0}, (N(A))_{A\in \cS_0}\)$ on the original probability space of $X$ such that 
\begin{equation} \label{Kal1}
  X_{T_0} = f((G_{t})_{t\in T_0}, (N(A))_{A\in \cS_0}) \quad a.s.
\end{equation}
and $\((G_{t})_{t\in T_0}, (N(A))_{A\in \cS_0}\) \eid (G'_{T_0}, N'_{\cS_0})$. Since 
$(N(A))_{A\in \cS_0}$ is a Poisson random measure on the algebra $\cS_0$, independent of $(G_{t})_{t\in T_0}$, it extends uniquely to a Poisson random measure $N$ on $(S, \cS, n)$, which is also independent of $(G_{t})_{t\in T_0}$. Therefore, \eqref{Kal1} establishes \eqref{LI} for $t \in T_0$ (see \cite[Theorem 5.2]{R-R89} for more details).

Since $X$ is separable in probability,  for every $t \in T$ there exists $\tau_n \in T_0$ such that $X_{\tau_n} \cip X_t$. By a symmetrization inequality,  $G_t := \lim_{n \to \infty} G_{\tau_n}$ exists in probability and $G=\{G_t \}_{t\in T}$ is independent of $N$. Having $G$ and $N$ constructed, we use Proposition \ref{p:LI-0} again to state that
\begin{equation} \label{}
  X_t'' := G_t + \int_{S} V_t \, \big(dN - \chi(V_t) \, dn\big) + b(t), \quad t\in T
\end{equation}
is a version of $X$. Since  \ $X_t = X_t''$ a.s. for each $t \in T_0$, and $T_0$ is a common separant for both  $X$ and $X''$, we get $X_t=X_t''$  a.s. for all $t \in T$. This establishes \eqref{LI} and completes the proof. 
\qed

\bigskip

\np
{\bf Proof of Theorem \ref{transfer}.}
Let $f: S \mapsto (0,1]$ be a measurable function such that $\int_{S} f(s) \, n(ds) <\infty$.  Consider a finite measure $n_0(ds) := f(s) n(ds)$  on $S$. Let $\xi^k=(\xi^{k}_{t})_{t\in T}$ be an i.i.d.  sequence of processes over $T$ with the common distribution $\theta^{-1} n_0\circ V^{-1}$, where $\theta= n_0(S)$, so that 
$$
n_0\circ V^{-1}(B) = \theta \P( \xi^k \in B), \quad B \in \cB^T.   
$$
 Let $Y_t:= \sum_{k=1}^{\eta} \xi_t^k$ be a compound Poisson  process, where $\eta$ is a Poisson random variable with mean $\theta$ and independent of $\{\xi^k \}$. Put $\nu_0:=n_0\circ V^{-1}$ and notice that for every  $B \in \cB^T$
$$
  \nu_0(B) = \int_{S} \1_B(V(s)) f(s) \, n(dx) \le \int_{S} \1_B(V(s)) \, n(dx) = \nu(B).
$$
Therefore, $\nu_0$ is equivalent to $\nu$ and $\nu_0 \le \nu$. Since $\nu$ is $\sigma$-finite, $\nu$ satisfies condition \eqref{cLM}, so does $\nu_0$. Therefore, $\nu_0$ is a L\'evy measure of $Y$ (see Example \ref{e:CP}). Suppose that the process $X$ has the generating triplet $(\Sigma, \nu, b)$.  Let $Z=(Z_{t})_{t\in T}$ be an infinitely divisible process independent of $\{\eta, \xi_t^k: t\in T, \ k\in \N \}$ and with the generating triplet $(\Sigma, \nu-\nu_0, c)$, where $c$ is a shift function such that 
$$
X \eid Y + Z.  
$$
By Lemma \ref{l:reg} given below, there exists $\widetilde{X}$ with all paths in $U$ such that $\widetilde{X}_t = Y_t + Z_t$ a.s. for each $t \in T$. We will now check that $Z$ satisfies  \eqref{reg}. For every $A \subset \R^T \setminus U$, $A \in \cB^T$ we get
\begin{align*} 
  \P(Z \in A) = e^{\theta} \P(Z \in A, \, \eta=0) = e^{\theta} \P(Y+Z \in A, \, \eta=0) \le  e^{\theta} \P(\widetilde{X} \in A) =0. 
\end{align*}
Thus, there exists $\widetilde{Z}$ with all paths in $U$ such that $\widetilde{Z}_t = Z_t$ a.s. for each $t \in T$. Hence, by our assumption on $U$, \  $\widetilde{Y}:= \widetilde{X} - \widetilde{Z}$ is a modification of $Y$ with all paths in $U$. Consider the representation $V$ as a stochastic process under the probability measure $\theta^{-1} n_0$, so we have $V \eid \xi^1$. For any set $A$ as above we have
\begin{align*} 
\theta^{-1} n_0( V \in A) & =  \P( \xi^1 \in A) = \theta^{-1} e^{\theta} \P( \xi^1 \in A, \, \eta =1) = \theta^{-1} e^{\theta} \P( \widetilde{Y} \in A, \, \eta =1) = 0.
\end{align*}
By Lemma \ref{l:reg} there exists a process $\widetilde{V}=(\widetilde{V}_{t})_{t\in T}$ with all paths in $U$ such that 
$$
\theta^{-1} n_0\{s: \widetilde{V}_{t}(s)\ne V_t(s) \}=0 \quad \text{for all } t \in T.
$$
Since the measures $n$ and $\theta^{-1} n_0$ are equivalent, this proof is complete.
\qed 

\bigskip

\begin{lemma}\label{l:reg}
	Let $X=\(X_t \)_{t\in T}$ be a stochastic process and let $U \subset \R^T$. Assume that $(U, \mathcal{U})$  is a Borel space for the $\sigma$-algebra $\mathcal{U}= \mathcal{B}^T \cap U$. Then there exists a process $\widetilde{X}$ with all paths  in $U$  such that $\widetilde{X}_t=X_t$ a.s. for every $t \in T$ if and only if 
\begin{equation} \label{reg}
  \P( X \in A)=0 \quad \text{for all } A \subset \R^T \setminus U, \ A \in \cB^T.
\end{equation}
\end{lemma}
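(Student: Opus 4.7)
My plan is to prove each direction separately. The ``only if'' direction is a direct consequence of the fact that every cylindrical Borel set depends on only countably many coordinates; the ``if'' direction is the substantive one and will exploit the countable generation of $\mathcal{U}$ coming from the standard-Borel structure.

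For the ``only if'' direction, suppose $\widetilde{X}$ has all paths in $U$ and $\widetilde{X}_t = X_t$ a.s.\ for each $t$. Any $A \in \cB^T$ is supported on a countable set of coordinates, so $A = \pi_{T_0}^{-1}(B)$ for some $T_0 \in \hT_c$ and $B \in \cB^{T_0}$. Then $\{X \in A\} = \{X_{T_0} \in B\}$ and $\{\widetilde{X} \in A\} = \{\widetilde{X}_{T_0} \in B\}$, which coincide outside a countable union of null sets. If $A \subset \R^T \setminus U$, then $A \cap U = \emptyset$, and since $\widetilde{X}(\omega) \in U$ for every $\omega$, $\{\widetilde{X} \in A\} = \emptyset$, yielding $\P(X \in A) = 0$.

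For the ``if'' direction, the idea is to find a countable $T_0 \subset T$ that fully controls the structure of $U$ and then define $\widetilde{X}$ as a Borel function of $X_{T_0}$. Since $(U, \mathcal{U})$ is a standard Borel space, $\mathcal{U}$ is countably generated, say $\mathcal{U} = \sigma(B_n \cap U : n \in \N)$ with $B_n \in \cB^T$. Each $B_n$ depends only on coordinates in some countable $T_n \subset T$; set $T_0 := \bigcup_n T_n \in \hT_c$. The family $\{B_n \cap U\}$ must still separate points of $U$ (a standard argument, since the class of sets that are unions of equivalence classes of the relation ``not separated by generators'' is itself a $\sigma$-algebra), so the restriction $\phi := \pi_{T_0}|_U : U \to \R^{T_0}$ is injective. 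Since $\phi$ is an injective Borel map from the standard Borel space $U$ into the Polish space $\R^{T_0}$, the Kuratowski--Lusin theorem gives $V := \phi(U) \in \cB^{T_0}$ with $\phi : U \to V$ a Borel isomorphism. Fix $u_0 \in U$ and define $\psi : \R^{T_0} \to U$ by $\psi = \phi^{-1}$ on $V$ and $\psi \equiv u_0$ off $V$; set $\widetilde{X}(\omega) := \psi(X_{T_0}(\omega))$, which is $\mathcal{U}$-measurable with all paths in $U$.

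It remains to verify $\widetilde{X}_t = X_t$ a.s.\ for every $t \in T$. Because $U \subset \pi_{T_0}^{-1}(V)$, the set $\pi_{T_0}^{-1}(V^c)$ is a Borel subset of $\R^T \setminus U$, so \eqref{reg} forces $\P(X_{T_0} \in V^c) = 0$. On the a.s.\ event $\{X_{T_0} \in V\}$, we have $\widetilde{X}(\omega) = \phi^{-1}(X_{T_0}(\omega))$, whose $T_0$-coordinates agree with $X_{T_0}(\omega)$; this handles $t \in T_0$. For $t \notin T_0$, let $f_t : V \to \R$ denote the $t$-th coordinate function of $\phi^{-1}$, which is Borel. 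The set
\begin{equation*}
  A := \{x \in \R^T : x_{T_0} \in V,\ x(t) \ne f_t(x_{T_0})\}
\end{equation*}
lies in $\cB^T$ and is disjoint from $U$, since any $x \in U$ satisfies $x = \phi^{-1}(x_{T_0})$ and hence $x(t) = f_t(x_{T_0})$. Hypothesis \eqref{reg} therefore gives $\P(X \in A) = 0$, which combined with $\P(X_{T_0} \in V) = 1$ yields $X_t = f_t(X_{T_0}) = \widetilde{X}_t$ a.s. The main obstacle is precisely this last step: for $t \notin T_0$ the value $X_t$ is a priori entirely unconstrained by $X_{T_0}$, and it is only by exhibiting the specific Borel witness $A \subset \R^T \setminus U$ and invoking \eqref{reg} on it that one forces the agreement $X_t = \widetilde{X}_t$ almost surely.
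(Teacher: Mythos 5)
Your proof is correct, but it takes a genuinely different route from the paper's. The paper's argument is short and indirect: under \eqref{reg} the assignment $\mu(B):=\P(X\in A)$ for $A\cap U=B$, $A\in\cB^T$, is a well-defined probability measure on $(U,\mathcal{U})$; the evaluation process $Y_t(u)=u(t)$ on $(U,\mathcal{U},\mu)$ then has all paths in $U$ and satisfies $Y\eid X$, and the existence of the modification $\widetilde{X}$ is obtained by citing the transfer result \cite[Lemma 3.24]{Kallenberg(01)}. You instead give a self-contained, constructive argument: you extract a countable $T_0\in\hT_c$ carrying a generating and point-separating family for $\mathcal{U}$, show $\pi_{T_0}|_U$ is injective, invoke the Kuratowski--Lusin theorem to get a Borel isomorphism onto a Borel set $V\subset\R^{T_0}$, and define $\widetilde{X}$ explicitly as a Borel function of $X_{T_0}$; the agreement $\widetilde{X}_t=X_t$ a.s.\ for $t\notin T_0$ is then forced by applying \eqref{reg} to the explicit Borel witness $A=\{x: x_{T_0}\in V,\ x(t)\ne f_t(x_{T_0})\}\subset\R^T\setminus U$. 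The key steps (point separation by a countable subfamily of generators, the injective-image theorem, and the verification that $A\cap U=\emptyset$) are all handled correctly. What your approach buys is independence from the external transfer lemma and the extra information that $\widetilde{X}$ can be chosen measurable with respect to $\sigma(X_{T_0})$ for a single countable $T_0$; what the paper's approach buys is brevity, at the cost of delegating essentially the same construction to Kallenberg's lemma.
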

\np
{\bf Proof of Lemma \ref{l:reg}.} The necessity of \eqref{reg} is obvious, so we will prove its sufficiency.  Define for every $B \in \mathcal{U}$ 
$$
\mu(B) := \P(X \in A), \quad \text{when } A \cap U = B, \   A \in \cB^T.
$$
It is routine to check that under \eqref{reg} $\mu$ is a well-defined probability measure on $(U, \mathcal{U})$. Let $Y_t(u) :=u(t)$, $t\in T$, $u \in U$. Then $Y=\(Y_t \)_{t\in T}$ is a stochastic process defined on $(U, \mathcal{U}, \mu)$ with paths in $U$. For every $A \in \cB^T$
$$
\mu(Y \in A) =  \mu(Y \in A \cap U) = \P(X \in A), 
$$
so that $Y \eid X$. By \cite[Lemma 3.24]{Kallenberg(01)}, $X$ has a modification $\widetilde{X}$ whose paths lie in $U$ such that $\widetilde{X}_t=X_t$ a.s. for each $t\in T$. \qed 

\bigskip

As we have mentioned in Section \ref{s:iso}, Theorem \ref{t:iso0} is a consequence of Theorem \ref{t:iso1}, which itself is deduced from Theorem \ref{t:iso2}. Therefore, we begin with Theorem \ref{t:iso2}.

\medskip

\np
{\bf Proof of Theorem \ref{t:iso2}.} Notice that $\E N(q) = \int_{S} q \, dn =1$. Let $Y= X -G$ be the Poissonian part of $X$. It is enough to prove \eqref{iso2} for $F$ of the form $F(x)= \exp\{i \sum_{j=1}^{n} a_j x(t_j) \}$, where $a_1,\dots,a_n \in \R $, $t_1,\dots,t_n \in T$, $n \ge 1$. We have $F(X)=F(G)F(Y)$, where
$$
F(Y) =\exp\left\{i \sum_{j=1}^{n} a_j \left[ \int_{S} V_{t_j} \left(dN - \chi(V_{t_j})\, dn\right)+ b(t_j)\right]\right\} = H(N)
$$
Here $N=(N(A))_{A \in \cS}$ is viewed as a stochastic process and $H: \R^{\cS} \mapsto \R$ is a measurable functional. 
Using the independence and Mecke-Palm formula we get
\begin{align*} 
  \E [F(X)N(q)] &= \E [F(G)H(N) N(q)] = \E [F(G)]\, \E [H(N) N(q)] \\
  & = \E [F(G)] \,  \E \int_{S}\, q(s) H(N) \, N(ds) \\
  &= \E [F(G)]\, \int_{S} \E \left[ q(s) H(N + \delta_s) \right] \, n(ds)\\
  &= \E [F(G)]\, \int_{S} \E \left[ q(s) F(V(s)) H(N) \right] \, n(ds)\\
  &= \E [F(G)]\, \E [H(N)] \int_{S} F(V(s))\, q(s)\, n(ds)\\
   &= \E [F(X)]\,  \int_{S} F(V(s))\, q(s)\, n(ds)\\
   &= \int_{S} \E  F(X+ V(s)) \,   \, q(s)\, n(ds)\, .
\end{align*}
This establishes \eqref{iso2}.

To prove \eqref{iso3} notice that $X$ and $N(q)$ are jointly infinitely divisible. Let $\theta$ be an isolated point of $T$ and put $T_{\theta} = T \cup \{\theta\}$. Consider an infinitely divisible process $\bar{X} = (\bar{X}_{t})_{t\in T_{\theta}}$ given by
$$
\bar{X}_t = \begin{cases}
    X_t    &  t \in T   \\ 
     N(q)      & t=\theta. 
\end{cases} 
$$
The L\'evy measure of $\bar{X}$ has a representation $\bar{V}$ on $(S, \cS, n)$ of the form
$$
\bar{V}_t = \begin{cases}
    V_t    &  t \in T   \\ 
     q      & t=\theta. 
\end{cases} 
$$
Let $H: \R^{T_{\theta}} \mapsto \R$ be given by $H(x) = F(x_{T}) \frac{1}{x_{\theta}} \1( x_{\theta} >0)$, $x \in T_{\theta}$, where $F$ is as above. Applying \eqref{iso2} we get
\begin{align} 
  \E [F\left( \(X_t\)_{t\in T}\right); \, N(q)>0] &=  \E [H\left( \(\bar{X}_t\)_{t\in T_{\theta}}\right); \, N(q)] \\
  &= \int_{S} \E  H\left(\(\bar{X}_t+ \bar{V}_t(s)\)_{t\in T_{\theta}} \right) \, q(s)\, n(ds) \\
  &= \int_{S} \E  F\left(\(X_t+ V_t(s)\)_{t\in T} \right) \frac{\1(N(q)+ q(s) >0)}{N(q) + q(s)} \, q(s)\, n(ds) \\
 &= \int_{S} \E \big[ F\left(\(X_t+ V_t(s)\)_{t\in T} \right); \, (N(q)+ q(s))^{-1}\big] \,q(s)\, n(ds)\, , 
\end{align}
which shows \eqref{iso3}.

The last formula \eqref{iso4} follows from the previous \eqref{iso3} since
\begin{align}\label{N>0} 
  \P(N(q)>0) &= 1- \lim_{u \to \infty} \E e^{-u N(q)} = 
1-  \lim_{u \to \infty} \exp\left(\int_{S} [e^{-u q(s)} -1 ] \, n(ds)\right) \\
  &= 1- \exp\left(- n\{ q(s) >0\} \right).
\end{align}
The proof is complete. 
\qed

\bigskip

\np
{\bf Proof of Theorem \ref{t:iso1}.}
First we will show that part {\bf (a)} follows from part {\bf (b)} of this theorem. Indeed, suppose $\cL(Z) \ll \nu$, so that $\cL(Z) \ll \nu + \delta_{0_T}$. Let $q = \frac{d\cL(Z)}{d(\nu+ \delta_{0_T})}$. Since $\nu$ is $\sigma$-finite, $\nu\{x: x_{T_0}=0 \}=0$ for some $T_0 \in \hT_c$, so that $\P(Z_{T_0} = 0 )=0$.  
We have
\begin{equation} \label{0-cont}
 0= \P(Z_{T_0} = 0)= \int_{\{x: x_{T_0}=0 \}} q(x) \, \nu(dx) + q(0_T) \delta_{0_{T}}(\{x: x_{T_0}=0 \}) = q(0_T). 
\end{equation}
Hence $q(0_T) =0$, in which case \eqref{iso1b} becomes \eqref{iso1} and \eqref{iso1'b} becomes \eqref{iso1'}. Therefore, we only need to prove {\bf (b)}.

Let $S= \R^T$ and $\cS= \cB^T$. Consider $N_1 = N + \eta \delta_{0_T}$, where $\eta$ is a Poisson random variable with mean 1 independent of $N$ and $G$. $N_1$ is a Poisson random measure on $(S, \cS)$ with intensity $n_1 = \nu + \delta_{0_T}$. Notice that \eqref{can1} still holds after replacing $N$ by $N_1$ and $\nu$ by $n_1$. Therefore, $V_t(x)=x(t)$ is a representation of $\nu$ on $(S, \cS, n_1)$. Using Theorem \ref{t:iso2} we get
\begin{align*} 
  \E F\left(\(X_t+Z_t\)_{t \in T} \right) &= \int_{S} \E F\left(\(X_t+x(t)\)_{t \in T} \right) \, q(x) \, n(dx) \\
  &= \E \left[F\left( \(X_t\)_{t\in T}\right); \, N_1(q) \right] 
  = \E \left[F\left( \(X_t\)_{t\in T}\right); \, N(q) + \eta q(0_T) \right] \\
  &= \E \left[F\left( \(X_t\)_{t\in T}\right); \, N(q) \right] + \E \left[F\left( \(X_t\)_{t\in T}\right) \right]\, q(0_T)\, , 
\end{align*}
by the independence of $N$, $G$ and $\eta$, and $\E \eta=1$. This proves \eqref{iso1b}.


Since $\cL(Z) \ll n_r:=\nu + r \delta_{0_T}$ for every  $r>0$, we may consider $q_r := \frac{d\cL(Z)}{d n_r}$, so that $q_1=q$. Because $\nu$ and $\delta_{0_T}$ are singular, $q_r(0_T)= r^{-1}q(0_{T})$ and $q_r=q$  $\nu$-a.e. Therefore,  we can take $q_r = q\1_{U^c} + r^{-1}q(0_{T})\1_{U}$ as a version of $q_r$, where $U$ is any set such that  $0_T \in U \in \cB^T$ and $\nu(U)=0$.

Let $\eta_r$ be a Poisson random variable with mean $r$, independent of $N$ and $G$, and let $N_r = N + \eta_r \delta_{0_T}$. By the same argument as above, with $n_1$ replaced by $n_r$ and $N_1$ replaced by $N_r$, we use \eqref{iso3} of Theorem \ref{t:iso2} to get
\begin{align*} 
  \E & \left[F\(\(X_t\)_{t \in T}\); \, N_r(q_r) >0\right] \\
  &\qquad =  \E \left[F\left(\(X_t+Z_t\)_{t \in T} \right)\(N_r(q_r) + q_r(Z) \)^{-1} \right]
\end{align*}
which can written as
\begin{align} \label{}
  \E & \left[F\(\(X_t\)_{t \in T}\); \, N(q) + r^{-1} \eta_r q(0_T) >0\right] \\
  &\qquad =  \E \left[F\left(\(X_t+Z_t\)_{t \in T} \right)\(N(q) + r^{-1} \eta_r q(0_T) + q(Z)\1_{U^c}(Z) + r^{-1} q(0_T) \1_{U}(Z)\)^{-1} \,  \right].
\end{align}
Letting $r \to \infty$, and using that $r^{-1} \eta_r \cip 1$, we get
\begin{align} \label{}
  \E & \left[F\(\(X_t\)_{t \in T}\)\right] = \E \left[F\(\(X_t\)_{t \in T}\); \, N(q) + q(0_T) >0\right]\\
  &\qquad =  \E \left[F\left(\(X_t+Z_t\)_{t \in T} \right)\(N(q) + q(0_T) + q(Z)\1_{U^c}(Z) \)^{-1} \,  \right]\\
  &\qquad =  \E \left[F\left(\(X_t+Z_t\)_{t \in T} \right)\(N(q) + q(Z) + q(0_T)\1_{U^c}(Z) \)^{-1} \,  \right]\, 
\end{align}
because $q(Z)\1_U(Z) = q(0_T)\1_U(Z)$ a.s. 

Finally, $\cL(X+Z) \ll \cL(X)$ follows from \eqref{iso1}. Conversely, notice that $N(q) + q(0_T)>0$ a.s. if either $q(0_T)>0$ or, by \eqref{N>0}, $\nu\{x: q(x) >0 \}= \infty$. In these cases \eqref{iso1'} gives $\cL(X) \ll \cL(X+Z)$. 
The proof is complete.
\qed
   
\bigskip  

\np
{\bf Proof of Theorem \ref{t:iso0}:}
Without loss of generality, we may assume that $X= G+Y$, where $G$ is s centered Gaussian process, $Y$ is the Poissonian part of $X$ given by a canonical spectral representation \eqref{can1} relative to a Poisson random measure $N$, where $G$, $N$, and $Z$ are independent. By Theorem \ref{t:iso1}, 
$$
\E F\left(\(X_t+Z_t\)_{t \in T} \right) = \E \left[ F\left(\(X_t\)_{t \in T} \right); \, N(q) \right]\, ,
$$
which implies
$$
\cL(X+Z)  \ll \cL(X).
$$
Therefore $\E F\left(\(X_t+Z_t\)_{t \in T} \right) = \E \left[ F\left(\(X_t\)_{t \in T} \right); \, g(X) \right]$, where $g(x) = \frac{d\cL(X+Z)}{d\cL(X)}(x)$, $x\in \R^T$.
\qed

\bigskip

\np 
{\bf Proof of Lemma \ref{l:s-iso}.}
We follow, with some necessary modifications,  arguments from \cite[Lemma 3.1]{Eisenbaum08}. Assume $(i)$. Then the Laplace transform of $Y$ is of the form
$$
 \phi(\alpha_1,\dots,\alpha_n)=\E \exp\{-\sum_{i=1}^{n} \alpha_i Y_i\}
  = \exp\big\{-\sum_{i=1}^{n} \alpha_i c_i -\int_{\R_+^{n}}
 (1-e^{-\sum_{i=1}^{n}\alpha_i y_i}) \nu(dy) \big\}.
$$
where $\alpha_i, c_i\ge 0$. We have
\begin{align} \label{e0}
  \frac{\partial}{\partial \alpha_k} \phi(\alpha_1,\dots,\alpha_n) = 
 -\phi(\alpha_1,\dots,\alpha_n)\big[c_k + \int_{\R_+^{n}}
 e^{-\sum_{i=1}^{n}\alpha_i y_i} \, y_k \nu(dy) \big].
\end{align}
Hence $\theta_k=\E(Y_k)= c_k + \int_{\R_+^{n}} y_k \nu(dy)$. Let $Z^k$ be a vector in $\R_{+}^n$ independent of $Y$ whose  distribution is given by
$$
 \mathcal{L}(Z^k)(dy) = \frac{c_k}{\theta_k} \delta_{(0,\dots, 0)}(dy) +  \frac{y_k}{\theta_k} \, \nu(dy).
$$
Using \eqref{e0} we obtain
$$
\E \big[\exp\{-\sum_{i=1}^{n} \alpha_i Y_i\};\, \theta_k^{-1}Y_k\big] = \E \big[\exp\{-\sum_{i=1}^{n} \alpha_i (Y_i + Z^k_i)\}\big],
$$
which yields (ii).
\medskip

Assume $(ii)$. 
We will prove that $Y$ is infinitely divisible with the L\'evy measure and drift given by  \eqref{R1}-\eqref{R2}. To this end, we first show that 
for any bounded measurable functional $F: \R^n \mapsto \R$  and $j, k \le n$
\begin{equation}\label{e1} 
\theta_j  \E [F(Z^j) Z^j_k] =  \theta_k \E[ F(Z^k)Z^k_j].
\end{equation}
It is enough to show \eqref{e1} for $F(y)=\exp\{- \sum_{i=1}^{n} \alpha_i y_i\}$, where $\alpha_i > 0$. Using  \eqref{E2} twice and independence we get
\begin{align*} 
  \E [F(Y) Y_j; Y_k] &= \theta_k \E[ F(Y+Z^k) (Y_j+Z^k_j)] \\
  &= \theta_k \E[ F(Y+Z^k); Y_j)] + \theta_k \E[ F(Y+Z^k)Z^k_j] \\
  &= \theta_k \E[ F(Y); Y_j)]\E[ F(Z^k)] + \theta_k \E[ F(Z^k)Z^k_j] \E[ F(Y)] \\
  &= \theta_j \theta_k \E[F(Y)] \E[F(Z^j)]\E[ F(Z^k)] + \theta_k \E[ F(Z^k)Z^k_j] \E[ F(Y)].
\end{align*}
Since interchanging $j$ and $k$ does not change the first term in these equations, equating the final terms after the interchange gives \eqref{e1}.
Taking $F(y)=\1_{\{y_j=0 \}}$ in \eqref{e1} yields
$$
\theta_j  \E [\1_{\{Z^j_j=0 \}} Z^j_k] =  \theta_k \E[\1_{\{Z^k_j=0 \}} Z^k_j]= 0.
$$
This implies that for every $j, k \le n$,
\begin{equation} \label{e2}
  \{Z^j_k >0 \} \subset \{Z^j_j >0 \} \quad \text{a.s.} 
\end{equation}
Applying \eqref{e1} to $F(y)(y_j y_k)^{-1}\1_{\{y_j y_k >0 \}}$ in the place of $F$, where $F\ge 0$, and taking into account \eqref{e2}, we obtain  
\begin{equation} \label{e3}
 \theta_j  \E [F(Z^j) \1_{\{Z^j_k>0 \}} (Z^j_j)^{-1}] =  \theta_k \E[ F(Z^k) \1_{\{Z^k_j>0 \}} (Z^k_k)^{-1}]. 
\end{equation}

Now we are ready to prove (i) together with \eqref{R1}-\eqref{R2}.  For $n=1$, we get from \eqref{E2} 
$$
\E[e^{-\alpha Y} Y]= \theta \E e^{-\alpha (Y+Z)} = \theta \E e^{-\alpha Y} \E e^{-\alpha Z},
$$
which yields 
$$
\frac{d}{d \alpha} \log \E[e^{-\alpha Y}] = - \theta \E e^{-\alpha Z}.
$$
Therefore,
\begin{align*} 
\E[e^{-\alpha Y}] &= \exp\{-\theta \E \int_{0}^{\alpha} (\1_{\{Z=0 \}} +  e^{-sZ}\1_{\{Z>0 \}})\, ds \}\\
& =\exp\{- \alpha \theta \P(Z=0) - \theta\int_{0}^{\infty} (1-e^{-\alpha y}) \1{\{y>0 \}} y^{-1} \, \mathcal{L}(Z)(dy) \},  
\end{align*}
which shows our claim. 

We proceed by induction. Assuming (ii), suppose that (i) and \eqref{R1}-\eqref{R2} hold for $n-1$. Since $\tilde{Y}:=Y_{\{1,\dots,n-1 \}}$ and $\tilde{Z}^j:=Z^j_{\{1,\dots,n-1 \}}$ ($j\le n-1$) satisfy (ii) for $n-1$ in the place of $n$, by the induction hypothesis $\tilde{Y}$ is infinitely divisible with the Laplace transform  
$$
\tilde{\phi}(\alpha_1,\dots, \alpha_{n-1})=  \exp\{-\sum_{i=1}^{n-1} \alpha_i \tilde{c}_i -\int_{\R_+^{n-1}} (1-e^{-\sum_{i=1}^{n-1}\alpha_i y_i}) \tilde{\nu}(dy) \},
$$
where 
$
\tilde{c}= (\theta_1 \P(Z^1_1=0),\dots, \theta_n \P(Z^{n-1}_{n-1}=0))
$
and
$$
\tilde{\nu}(dy) = \sum_{k=1}^{n-1} \theta_k \1_{\{y_1= \cdots =y_{k-1}=0, y_{k}>0 \}} y_{k}^{-1} \mathcal{L}(\tilde{Z}^k)(dy). 
$$
Let $\phi(\alpha_1,\dots,\alpha_n)=\E \exp\{-\sum_{i=1}^{n} \alpha_i Y_i\}$. Proceeding as before, we get
\begin{align*} 
  \frac{\partial}{\partial \alpha_n} \log \phi(\alpha_1,\dots,\alpha_n) = - \theta_n \E \exp\{-\sum_{i=1}^{n} \alpha_i Z_i^{n}\}.
\end{align*}
Hence
\begin{align}\label{e4} 
  \phi(\alpha_1,\dots,\alpha_n) &= \tilde{\phi}(\alpha_1,\dots, \alpha_{n-1}) \exp\big\{-\theta_n \E \big[e^{-\sum_{i=1}^{n-1} \alpha_i Z_i^{n}}\int_{0}^{\alpha_n} e^{-s Z_n^{n}}\, ds\big] \big\}.
\end{align}
Notice that $Z_i^{n}=0$ a.s. on the set $\{Z^n_n=0 \}$ by \eqref{e2}.
Therefore,  the exponent of the last term on the right hand side of \eqref{e4} equals
\begin{align*} 
  -\theta_n & \E \big[\alpha_n \1_{\{Z^n_n=0 \}} + e^{-\sum_{i=1}^{n-1} \alpha_i Z_i^{n}} (1- e^{-\alpha_n Z_n^{n}}) \1_{\{Z_n^n>0\}}    
  (Z_n^n)^{-1}\big] \\
  &=-\alpha_n c_n -\theta_n \sum_{k=1}^{n} \E \big[e^{-\sum_{i=1}^{n-1} \alpha_i Z_i^{n}} (1- e^{-\alpha_n Z_n^{n}}) \1_{\{Z^n_1= \cdots =Z^n_{k-1}=0, \, Z_k^n>0\}} (Z_n^n)^{-1}  \big]
\end{align*}
which, after applying \eqref{e3} and noticing that the term $Z_n^k>0$ can be replaced by $Z_k^k>0$, gives us
\begin{align*} 
  & =-\alpha_n c_n - \sum_{k=1}^{n} \theta_k\E \big[e^{-\sum_{i=1}^{n-1} \alpha_i Z_i^{k}} (1- e^{-\alpha_n Z_n^{k}}) \1_{\{Z^k_1=0= \cdots =Z^k_{k-1}=0, \, Z_k^k>0\}} (Z_k^k)^{-1}  \big] \\
  &=-\alpha_n c_n - \sum_{k=1}^{n} \theta_k \int_{\R_+^n} \big[e^{-\sum_{i=1}^{n-1}\alpha_i y_i} - 
  e^{-\sum_{i=1}^{n}\alpha_i y_i}\big] \, \1_{\{y_1= \cdots =y_{k-1}=0, y_{k}>0 \}} y_{k}^{-1} \mathcal{L}(Z^k)(dy) \\
  &=-\alpha_n c_n + \int_{\R_+^n} \big(1-e^{-\sum_{i=1}^{n-1}\alpha_i y_i}\big) \, \nu(dy) - 
  \int_{\R_+^n} \big(1-e^{-\sum_{i=1}^{n}\alpha_i y_i}\big) \, \nu(dy).
\end{align*}
Substituting the above into \eqref{e4} completes the proof.
\qed

\bigskip

\np 
{\bf Proof of Proposition \ref{p:s-iso}.}
The form of the drift follows from Lemma \ref{l:s-iso}.  
Let  $I=\{t_1,\dots,t_n \} \subset T$.  We have
\begin{align} 
  \int_{\R_{+}^T}  \big(1- & e^{-\sum_{i=1}^{n} \alpha_i y(t_i)}\big) \, \nu(dy)= \sum_{k\ge 1} \int_{\R_{+}^T} \big(1- e^{-\sum_{i=1}^{n} \alpha_i y(t_i)}\big) \, \nu_k(dy) \\
  &=\sum_{k\ge 1}\sum_{j=1}^n \int_{\R_{+}^T}  \big(1- e^{-\sum_{i=1}^{n} \alpha_i y(t_i)}\big)\1_{\{y(t_1)=\cdots =y(t_{j-1})=0,\, y(t_j)>0 \}} \, \nu_k(dy), \label{e5}
\end{align}
and by \eqref{e3}, for every  $j, k$, 
\begin{align*} 
 \int_{\R_{+}^T} & \big(1- e^{-\sum_{i=1}^{n} \alpha_i y(t_i)}\big)\1_{\{y(t_1)=\cdots =y(t_{j-1})=0,\, y(t_j)>0 \}} \, \nu_k(dy)\\
 & = \theta(s_k)
 \E\Big[ \big(1- e^{-\sum_{i=1}^{n} \alpha_i Z^{s_k}_{t_i}}\big)\1_{\{Z^{s_k}_{s_1}=\cdots= Z^{s_k}_{s_{k-1}}=0,\, Z^{s_k}_{s_k}>0,\, Z^{s_k}_{t_1}=\cdots=Z^{s_k}_{t_{j-1}}=0,\, Z^{s_k}_{t_j}>0 \}} (Z^{s_k}_{s_k})^{-1}\Big] \\
 &=  \theta(t_j)
 \E \Big[\big(1- e^{-\sum_{i=1}^{n} \alpha_i Z^{t_j}_{t_i}}\big)\1_{\{Z^{t_j}_{s_1}=\cdots = Z^{t_j}_{s_{k-1}}=0,\, Z^{t_j}_{s_k}>0,\, Z^{t_j}_{t_1}=\cdots =Z^{t_j}_{t_{j-1}}=0,\, Z^{t_j}_{t_j}>0 \}} (Z^{t_j}_{t_j})^{-1}\Big]
\end{align*}
Substituting this into \eqref{e5} and summing over $k$ gives 
\begin{align*} 
\int_{\R_{+}^T} & \big(1- e^{-\sum_{i=1}^{n} \alpha_i y(t_i)}\big) \, \nu(dy)\\
&=
  \sum_{j=1}^n \theta(t_j)
 \E \big(1-  e^{-\sum_{i=1}^{n} \alpha_i Z^{t_j}_{t_i}}\big)\1_{\{Z^{t_j}_{T_0} \ne  0 \}} \1_{\{Z^{t_j}_{t_1}=0,\dots,Z^{t_j}_{t_{j-1}}=0, Z^{t_j}_{t_j}>0 \}} (Z^{t_j}_{t_j})^{-1}
\end{align*}
In view of Lemma \ref{l:s-iso},  we now need to  show that  $\{Z^{t_j}_{T_0} \ne 0 \} = \{Z^{t_j}_{t_j}>0 \}$ a.s. To this aim, we first notice that $\{Z^{t_j}_{T_0} \ne 0 \} \subset \{Z^{t_j}_{t_j}>0 \}$ a.s. by \eqref{e2}. Then, choose $(s_{k_n})_{n\ge 1} \subset T_0$ such that $Y_{s_{k_n}} \cip Y_{t_j}$. Using \eqref{s-iso} for $F(y)= \exp(-\alpha (y(s_{k_n})- y(t_j)))$, $\alpha >0$ we get $Z^{t_j}_{s_{k_n}} \cip Z^{t_j}_{t_j}$ as $n \to \infty$. Hence
$$
\P(Z^{t_j}_{t_j}>0) \le \liminf_{n \to \infty} \P(Z^{t_j}_{s_{k_n}}>0) \le \P(Z^{t_j}_{T_0}>0) \le \P(Z^{t_j}_{t_j}>0).
$$
Since $\nu$ determines all finite dimensional distributions of $Y$ and clearly $\nu\{y: y_{T_0} =0 \} =0$, $\nu$ is the L\'evy measure of $Y$. 

To complete the proof, consider a nonnegative infinitely divisible process $(Y_t)_{t\in T}$ with finite mean $\theta(t)= \E Y_t$. Let $c$ and $\nu$ be the drift and L\'evy measure of $Y$, respectively. 
Similarly to the proof of Lemma \ref{l:s-iso}(i), we check that \eqref{iso-tr} satisfies \eqref{s-iso}. The proof is complete.
  \qed

\bigskip

\np 
{\bf Proof of Proposition \ref{p:Lp0}.}
Let $X$ be determined by \eqref{Lp-LK}, where we take $\lt v\rt = v \1_{[-1,1]}(v)$ for concreteness. 
 Given $h>0$, let $F: \R^{[0,h]} \mapsto \R$ be defined by $F(x)=f(x(h))$. 
 By \eqref{Lp-iso4} we have
 \begin{equation} \label{}
  h^{-1}\E [f(X_h); W_h >0] = \int_{\R } \E \big[ f\(X_h+ v\); \, (W_h+ q_1(v))^{-1}\big] \,q_1(v)\, \rho(dv)\,. 
\end{equation}
Choose $q_1 = m^{-1} \1_{\{|v|>\delta\}}$, where $\delta \in (0,1)$ is fixed and  $m :=\rho\{|v|>\delta \}>0$. Then we have
\begin{equation} \label{}
  h^{-1}\E [f(X_h)] =  h^{-1}\E [f(X_h); W_h =0] + \int_{ |v| > \delta } \E \big[ f\(X_h+ v\); \, (mW_h+ 1)^{-1}\big] \, \rho(dv)\, .
\end{equation}
Hence
\begin{align*} 
 | h^{-1} \E [f(X_h)]   - \int_{\R } f\(v\)  \,  \rho(dv) | & \le h^{-1}|\E [f(X_h); W_h =0]| + \int_{|v|\le \delta} |f\(v\)|  \,  \rho(dv) \\
 & + \int_{\{|v|> \delta\}} \E \big| f\(X_h+ v\) \(m W_h+ 1\)^{-1} - f(v)\big|  \, \rho(dv)  \\
 & :=  K_1(h, \delta) + K_2(\delta) + K_3(h, \delta)\, .
\end{align*}
Notice that if $W_h=0$, then $X_h=X^{\delta}_h$, where $X^{\delta}_h= X_h - \sum_{s\le h} \Delta X_s \1(|\Delta X_s| > \delta)$.   $X_h^{\delta}$ is a L\'evy process with variance $\sigma^2$ of its Brownian motion component $\{G_t\}$, L\'evy measure $\rho = \rho_{|\{|x|\le \delta \}}$, and the shift $c^{\delta}= c - \int_{\delta < |x|\le 1} x \, \rho(dx)$, so that $\E X^{\delta}_h = c^{\delta}h$. Put $Y_h^{\delta}=X_h^{\delta}-G_h$.  By the assumption, $|f(x)| \le x^2 k(x)$, where $0\le k(x) \le C$ is a bounded function with $\lim_{x \to 0} k(x)=0$, or only a bounded function when $\sigma^2=0$. We get
\begin{align*} 
  K_1(h,\delta) &\le  h^{-1}\E \(|X^{\delta}_h|^2k(X^{\delta}_h)\) \le 2 h^{-1}\E \(|G_h|^2k(X^{\delta}_h)\) + 2Ch^{-1}\E \(|Y^{\delta}_h|^2\) \\
  & \le 2 h^{-1} \(\E |G_h|^4\)^{1/2} \(\E |k(X^{\delta}_h)|^2\)^{1/2} + 
  2C h^{-1}\(\var\(Y^{\delta}_h\) + |\E Y^{\delta}_h|^2\) \\
 &= 2\sqrt{3} \sigma^2\(\E |k(X^{\delta}_h)|^2\)^{1/2} + 2C\var\(Y^{\delta}_1\) + 2C |c^{\delta}|^2 h \ \stackrel{h \to 0}{\longrightarrow} \ 2C \int_{\{|v| \le \delta \}} v^2 \, \rho(dv)\, .
\end{align*}
We see that when $\sigma^2=0$ we only need $k$ to be bounded. 
Then $K_2(\delta) \le C \int_{\{|v| \le \delta \}} v^2 \, \rho(dv)$ and $K_3(h,\delta) \to 0$ as $h \to 0$ because $X_h, W_h \to 0$ a.s. and continuous on a set of full $\rho$-measure.  
Combining the above estimates we get
$$
\limsup_{h \to 0}  | h^{-1} \E [f(X_h)]   - \int_{\R } f\(v\)  \,  \rho(dv) | \le 3C \int_{\{|v| \le \delta \}} v^2 \, \rho(dv)\, .
$$
Letting $\delta \to 0$ gives \eqref{LP3}.  

Finally, notice that if $X$ is a subordinator, then in the term $K_3(h,\delta)$, $X_h +v$ approaches $v$ from the right. Thus, if $f$ is right-continuous on a set of full $\rho$-measure we get also $\lim_{h\to 0}K_1(h, \delta)= 0$. This completes the proof.
\qed

\bigskip

\np
{\bf Proof of Theorem \ref{t:iso-s}.} Recall that $Y$ has the generating triplet $(0, \nu, b)$ and $V$ is a representation of $\nu$ on a $\sigma$-finite measure space $(S, \mathcal{S}, n)$. $\(\xi _j \)_{j\in \N}$ is an i.i.d. sequence of random elements in $S$ with the common distribution $g(s) n(ds)$, where $g>0$ $n$-a.e.  Finally, $\(\Gamma_j \)_{j\in \N}$ is a sequence of partial sums of i.i.d. standard exponential random variables independent of 
 $\(\xi_j \)_{j\in \N}$. 

We will show the isomorphism identities by a reduction to the stochastic integral case. To this aim, consider $\bar{S}=\{(s,r) \in S\times \R_{+}: 0\le r \le g(s)^{-1} \}$ with the measure $\bar{n}(ds,dr) := g(s) n(ds) dr$ and let $\bar{V}_t(s,r):= V_t(s)$. Then $\bar{V}$ is a representation a representation of $\nu$. Indeed, for any $A \in \R^T$
$$
\bar{n}\circ \bar{V}^{-1}(A)=  \int_{S} \int_0^{g(s)^{-1}} \1_A(V(s)) \, g(s) dr n(ds) = n\circ V^{-1}(A)= \nu(A).
$$
A Poisson random measure $N := \sum_{j=1}^{\infty} \delta_{(\xi_j, \Gamma_j)}$ on $S \times \R_{+}$ has the intensity measure $g(s) n(ds) dr$, so the restriction of $N$ to $\bar{S}$ has the intensity $\bar{n}$. We have
\begin{equation} \label{}
  Y_t =  \int_{\bar{S}} V_t(s) \Big[\bar{N}(ds, dr) - \chi(V_t(s)) \bar{n}(ds,dr)\Big] + b(t)\, \quad a.s.
\end{equation}
Therefore, using \eqref{iso3} of Theorem \ref{t:iso2} we get
\begin{align*} 
  \E  \left[ F\( (V_t(\xi_0) + Y_t)_{t \in T} \) \right] &= \E \int_{S} \left[ F\( (V_t(s) + Y_t)_{t \in T} \) \right] q(s) n(ds) \\
  &=  \E \int_{S} \int_0^{g(s)^{-1}} \left[ F\( (V_t(s) + Y_t)_{t \in T} \) \right] g(s) q(s)  \, dr \, n(ds) \\
  &=  \E \int_{\bar{S}} \left[ F\( (V_t(s) + Y_t)_{t \in T} \) \right]  q(s)\1\{r \le g(s)^{-1}\} \,  \bar{n}(ds, dr) \\
  &= \E \left[ F\( (Y_t)_{t \in T} \); \, \bar{q}(\bar{N}) \right]\, ,
\end{align*}
where $\bar{q}(s,r)= q(s)\1\{r \le g(s)^{-1}\}$ and 
\begin{align*} 
  \bar{N}(\bar{q}) = \int_{\bar{S}} q(s)\1\{g(s) \le r^{-1}\} \, \bar{N}(ds,dr) = 
  \sum_{j=1}^{\infty} q(\xi_j)\1\{g(\xi_j) \le \Gamma_j^{-1}\} =Q.
\end{align*}
This proves \eqref{iso-s1}. 

Conversely, using \eqref{iso4} of Theorem \ref{t:iso2} we get
\begin{align*} 
  \E \left[ F\( (Y_t)_{t \in T}\); \, Q >0\right] &= \E \left[ F\( (Y_t)_{t \in T}\); \, \bar{N}(\bar{q}) >0\right] \\
  & =  \E \int_{\bar{S}}  \big[ F\left(\(V_t(s)+ Y_t\)_{t\in T} \right); \, (\bar{N}(\bar{q})+ \bar{q}(s,r))^{-1}\big] \, \bar{q}(s,r)\, \bar{n}(ds,dr) \\
  & =  \E \int_{S} \int_0^{g(s)^{-1}}  \big[ F\left(\(V_t(s)+ Y_t\)_{t\in T} \right); \, (Q+ q(s))^{-1}\big] \, q(s)\, dr\,   g(s) n(ds)\\
  &= \E \int_{S}  \big[ F\left(\(V_t(s)+ Y_t\)_{t\in T} \right); \, (Q+ q(s))^{-1}\big] \, q(s)\,  n(ds)  \, .
\end{align*}
The last statement of the theorem follows from the corresponding statement in Theorem \ref{t:iso2}. 
\qed

\bigskip

\np
{\bf Acknowledgments.} The author is grateful to Nathalie Eisenbaum, Haya Kaspi, Michael B. Marcus, and Jay Rosen for the stimulating discussions on various topics related to permanental processes. He is also grateful to the anonymous referee for useful remarks and calling our attention to \cite[Lemma 3.1]{Eisenbaum08}, where an abstract form of Dynkin's isomorphism was introduced and considered.

\medskip

\bibliographystyle{amsplain}

\bigskip

%

 \end{document}